\theoremstyle{plain}
\newtheorem{theorem}{Theorem}[section]
\newtheorem{lemma}[theorem]{Lemma}
\newtheorem{Definition}[theorem]{Definition}
\theoremstyle{remark}
\newtheorem{remark}[theorem]{Remark}
\numberwithin{equation}{section}
\title[Non-Hermitian band matrices]{Outliers and bounded rank perturbation for non-Hermitian random band matrices}
\author{Yi HAN}
\address{Department of Mathematics, Massachusetts Institute of Technology, Cambridge, MA.
}
\email{hanyi16@mit.edu}
\thanks{Supported by EPSRC grant EP/W524141/1 and Simons Collaboration grant (601948, DJ)}
\begin{document}

\begin{abstract}

In this work we consider general non-Hermitian square random matrices $X$ 
that include a wide class of random band matrices with independent entries. Whereas the existence of a limiting density is not fully understood for these inhomogeneous models, we show that spectral outliers can be determined under very general conditions when perturbed by a finite-rank deterministic matrix. More precisely, we show that whenever $\mathbb{E}[X]=0,\mathbb{E}[XX^*]=\mathbb{E}[X^*X]=\mathbf{1}$ and $\mathbb{E}[X^2]=\rho\mathbf{1}$, and under mild conditions on sparsity and entry moments of $X$, then with high probability all eigenvalues of $X$ are confined in a neighborhood of the support of the elliptic law with parameter $\rho$. Also, a finite rank perturbation property holds: when $X$ is perturbed by another deterministic matrix $C_N$ with bounded rank, then the perturbation induces outlying eigenvalues whose limit depends only on outlying eigenvalues of $C_N$ and $\rho$. This extends the result of Tao \cite{tao2013outliers} on i.i.d. random matrices and O’Rourke and Renfrew on elliptic matrices \cite{article1221} to a family of highly sparse and inhomogeneous random matrices, including all Gaussian band matrices on regular graphs with degree growing faster than $(\log N)^3$. A quantitative convergence rate is also derived. We also consider a class of finite rank deformations of products of at least two independent elliptic random matrices, and show it behaves just as products of i.i.d. matrices.

\end{abstract}

\maketitle

\section{Introduction}

In this work we consider the spectral properties of a very general class of inhomogeneous non-Hermitian random matrices, including a wide class of random band matrices.

To put our results in a proper context, we first recall what is known for a random matrix with i.i.d. entries, which has a fully homogeneous variance profile.

\begin{theorem}\label{zeroththeorem}
Consider an $n\times n$ random matrix $A_n:=\{a_{ij}\}_{1\leq i,j\leq n}$ with i.i.d. elements $a_{ij}$ having mean zero and finite second moment: \begin{equation}\label{firstcondition}\mathbb{E}[a_{ij}]=0 \text{ , and } \mathbb{E}[|a_{ij}|^2]=1.\end{equation} 
\begin{enumerate}
\item (The circular law, see \cite{ WOS:000281425000010} and references therein)  The empirical spectral density of $n^{-1/2}A_n$, defined as $n^{-1}\sum_{j=1}^n\delta_{\lambda_j}$ with $(\lambda_j)_{1\leq j\leq n}$ the eigenvalues of $n^{-1/2}A_n$, converges to the circular law, i.e. the uniform distribution in the unit disk in the complex plane. 
\item (No outliers) Let $\rho(n^{-1/2}A_n)$ denote the spectral radius of $n^{-1/2}A_n$. Assuming the atom distribution $a_{ij}$ has a finite fourth moment: $\mathbb{E}[|a_{ij}|^4]<\infty$, then it was proven in \cite{Bai1986LimitingBO} and \cite{Geman1986THESR} that $\rho(n^{-1/2}A_n)$ converges to 1 almost surely. Later \cite{ WOS:000435416700013} and \cite{bordenave2021convergence} proved that the no-outlier result continues to hold only assuming a finite second moment $\mathbb{E}[|a_{ij}|^2]=1$. A recent work \cite{campbell2024spectral} considered far more general models and showed in particular the no-outlier result for certain elliptic ensembles.

\item (Low rank perturbation, see \cite{tao2013outliers}) Assuming the atom distribution $a_{ij}$ has a finite fourth moment. Fix some $\epsilon>0$ and let $C_n$ be a deterministic matrix with $O(1)$ rank and $O(1)$ operator norm for each $n$, having no eigenvalues with modulus in $[1+\epsilon,1+3\epsilon]$ and $j$ eigenvalues $\lambda_1(C_n),\cdots,\lambda_j(C_n)$ for a $j=O(1)$ in $\{z\in\mathbb{C}:|z|>1+3\epsilon\}$. Then almost surely when $n$ is sufficiently large, there are exactly $j$ eigenvalues $\lambda_1(n^{-1/2}A_n+C_n),\cdots,\lambda_j(n^{-1/2}A_n+C_n)$ of $n^{-1/2}A_n+C_n$ in $\{z\in\mathbb{C}:|z|>1+2\epsilon\}$ and after labeling, $\lambda_i(n^{-1/2}A_n+C_n)=\lambda_i(C_n)+o(1)$ for each $1\leq i\leq j$.
\end{enumerate}

\end{theorem}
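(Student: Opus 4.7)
The plan is to reduce the eigenvalue count to a determinantal identity via Sylvester's theorem, establish asymptotic convergence of a bilinear resolvent form, and then conclude by a Hurwitz-style argument on zeros of holomorphic functions.

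First, I would factor $C_n = U_n V_n^*$ with $U_n, V_n \in \mathbb{C}^{n \times r}$ and $r = O(1)$. Setting $M_n := n^{-1/2}A_n$, Sylvester's determinant identity gives for $z$ outside the spectrum of $M_n$
\begin{equation*}
\det(zI_n - M_n - C_n) \;=\; \det(zI_n - M_n)\cdot\det\bigl(I_r - V_n^*(zI_n - M_n)^{-1}U_n\bigr).
\end{equation*}
By part (2) of the theorem, the spectrum of $M_n$ is almost surely contained in $\{|z|\le 1+\epsilon\}$ for all large $n$, so the outliers of $M_n + C_n$ inside $\{|z| > 1+2\epsilon\}$ coincide with the zeros of $f_n(z) := \det\!\bigl(I_r - V_n^*(zI_n - M_n)^{-1}U_n\bigr)$ there.

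Next, I would prove that for every pair of deterministic unit vectors $u, v \in \mathbb{C}^n$ (possibly $n$-dependent) and every compact $K \subset \{|z| > 1+2\epsilon\}$,
\begin{equation*}
\sup_{z \in K}\bigl|v^*(zI_n - M_n)^{-1}u \;-\; v^*u/z\bigr| \;\xrightarrow[n\to\infty]{\mathrm{a.s.}}\; 0.
\end{equation*}
For $|z|$ exceeding the operator norm $\|M_n\| \le 2 + o(1)$ (Bai--Yin, using the fourth moment), this follows from the Neumann expansion $(zI - M_n)^{-1} = \sum_{k \ge 0} z^{-k-1} M_n^k$, with each bilinear form $v^* A_n^k u / n^{k/2}$ controlled by a direct moment calculation using centering, independence, and the fourth moment bound. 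For $|z|$ in the intermediate annulus $1+2\epsilon < |z| < 3$ the series diverges; there I would instead exploit that the bilinear form is holomorphic in $z$ on the complement of the spectrum (which a.s.\ covers the annulus), use a least singular value bound for $zI_n - M_n$ to guarantee that $(zI_n-M_n)^{-1}$ is polynomially norm-bounded off an exceptional event, and then invoke a Montel / normal families argument to propagate convergence from the exterior regime inward. Applied to the $O(1)$ columns of $U_n, V_n$, this yields $f_n(z) \to g_n(z) := \det(I_r - V_n^*U_n / z)$ locally uniformly on $\{|z| > 1+2\epsilon\}$.

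Finally, one more application of Sylvester's identity identifies $g_n(z) = \det(I_n - C_n/z)$, whose zeros in $\{|z| > 1+2\epsilon\}$ are precisely $\lambda_1(C_n), \ldots, \lambda_j(C_n)$. The hypothesis that $C_n$ has no eigenvalues with $|z| \in [1+\epsilon, 1+3\epsilon]$ keeps $g_n$ nowhere vanishing on a neighborhood of the circle $|z| = 1+2\epsilon$; a Hurwitz-type counting argument applied on small circles around each $\lambda_i(C_n)$ and on the boundary $|z| = 1+2\epsilon$ then shows that $f_n$ has exactly $j$ zeros in $\{|z| > 1+2\epsilon\}$ and, after labeling, $\lambda_i(M_n + C_n) = \lambda_i(C_n) + o(1)$.

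The main obstacle is the second step: inside the annulus $1+2\epsilon < |z| < \|M_n\|$ the Neumann series diverges and the resolvent norm $\|(zI - M_n)^{-1}\|$ can be very large because $M_n$ is highly non-normal, so no naive operator-norm bound yields concentration of the bilinear form. Converting exterior-domain concentration into uniform convergence over the full region therefore requires quantitative least singular value estimates for $zI_n - M_n$, combined with a holomorphic extension / normal families argument, and this is the step that will need to be reworked once we pass to the inhomogeneous band-matrix setting that is the main subject of the paper.
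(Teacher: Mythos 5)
This theorem is quoted as background in the paper, not proved there; the paper cites Tao \cite{tao2013outliers}, whose original argument proceeds by truncation and a Lindeberg-style moment comparison to the Gaussian ensemble rather than by an isotropic law. Your overall skeleton — factor $C_n = U_n V_n^*$, invoke Sylvester's identity $\det(zI-M_n-C_n)=\det(zI-M_n)\det\bigl(I_r-V_n^*(zI-M_n)^{-1}U_n\bigr)$, prove an isotropic law $v^*(zI-M_n)^{-1}u \to v^*u/z$, and finish with Rouch\'e — is in fact the strategy the paper adopts for its own master theorems (Theorem \ref{gaussianmastertheorem}, \ref{nongaussianmastertheorem}) and the one used by O'Rourke--Renfrew \cite{article1221} in the elliptic case, so the plan is sound in spirit and well aligned with the paper's methodology.

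The genuine gap is in how you propose to handle the annulus $1+2\epsilon < |z| < \|M_n\|$. You suggest that a \emph{polynomial-in-$n$} least-singular-value bound on $zI_n - M_n$ together with a Montel/Vitali normal-families argument propagates the exterior convergence inward. This cannot work as stated: Vitali and Montel require the family of holomorphic functions to be \emph{locally uniformly bounded in $n$}, i.e.\ bounded by a constant independent of $n$ on each compact, and a bound of the form $|v^*(zI_n-M_n)^{-1}u| \le n^C$ provides no such control — the hypotheses of the normal-families theorems are simply violated. What is actually needed, and what carries all the difficulty, is an $O(1)$ bound $\sup_{z\in K}\|(zI_n-M_n)^{-1}\| \le C_{\epsilon}$ with high probability for every compact $K\subset\{|z|>1+\epsilon\}$; polynomial small-ball estimates do not yield $\mathbb{E}[\sigma_{\min}(zI_n-M_n)^{-1}]=O(1)$. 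The paper obtains exactly this uniform resolvent bound in its generality (display \eqref{singularsupports} and \eqref{sufficientsmallwehave}) by Hermitizing to $\mathcal{Y}_z$ and showing, via the free-probability and universality results of \cite{bandeira2023matrix}, \cite{brailovskaya2022universality}, that $\operatorname{sp}(\mathcal{Y}_z)$ is close to $\operatorname{sp}(\mathcal{Y}_{z,\text{free}})$, which has an explicit $z$-uniform gap around $0$ for $z \notin \mathcal{E}_{\rho,\epsilon}$. In the i.i.d.\ setting Tao sidesteps the issue with the moment-comparison route, while isotropic-law proofs such as \cite{article1221} establish the $O(1)$ resolvent bound via a local law for the Hermitized matrix and then concentrate the bilinear form at each fixed $z$ before using a covering/Lipschitz argument — not a normal-families argument. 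Your step two therefore needs to be replaced by one of these two routes; as written it does not close.
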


\begin{remark}

 The corresponding problem in (3) low rank perturbation  for Hermitian random matrices has been studied in more detail in \cite{peche2006largest}, \cite{capitaine2012central}, \cite{pizzo2013finite}, \cite{feral2007largest}, \cite{benaych2011eigenvalues}, and see also references therein. There is a phase transition called the BBP transition: Consider a Wigner matrix $W_n$, a unit vector $v$ and $\theta>0$, then the largest eigenvalue of $n^{-1/2}W_n+\theta vv^t$ converges to $\theta+\frac{1}{\theta}$ when $\theta>1$ and converges to 2 when $\theta<1$. Item (3) of Theorem \ref{zeroththeorem} shows that when we consider instead a square matrix with i.i.d. entries, then eigenvalues outside the unit circle converge to eigenvalues of the perturbation.

\end{remark}

Inhomogeneous random matrix models have attracted much recent interest, from both the theoretical and applied perspective. Random band matrices are one of the most important inhomogeneous matrix models in modern mathematical physics \cite{bourgade2018random}. In this paper we investigate a non-Hermitian version of random band matrices, which can be defined in the following general form.

\begin{Definition}(Random matrices on regular digraphs with independent entries)\label{definebandmatrix}
Let $N$ be an integer and $d_N\in [1,N]$ be an integer depending on $N$. 

Fix any $d_N$-regular directed graph $G_N=([N],E_N)$ having $N$ vertices and a set $E_N$ of directed edges on $[N]$. 
We further require that there is no self-loop $(x,x)\in E_N$ for any $x\in [N]$.

Consider a family of mean $0$, variance $1$ (real or complex) random variables $g_{xy}$ for each $(x,y)\in E_N$. These random variables are assumed to be independent but not necessarily of the same distribution. Then we define an $N\times N$ random matrix $X_N$ by setting $(X_N)_{xy}=1_{(x,y)\in E_N}g_{xy}$. 

For the almost sure statements in the finite $p$-moment case, we take the standard coupling under which the entries that appear at level $N$ are contained in a single infinite i.i.d. sequence (i.e., we realize the entries appearing in all matrices from a single infinite i.i.d. family, and entries used for the $N$-th matrix are chosen from the first $O(Nd_N)$ variables.)

\end{Definition}Although self-loops are excluded here, we will eventually also consider graphs with self-loops in Theorem \ref{theoremlines404}.

 This definition is sufficiently general as it encompasses a wide class of distinct random matrix models. They include periodically banded matrices where we set $(x,y)\in E_N$ if $\min(|x-y|,N-|x-y|)\leq d_N$ for a bandwidth $d_N$ (up to replacing $d_N$ by a comparable degree parameter). They also include block matrices which are the direct sum of $N/d_N$ blocks of i.i.d. matrix of size $d_N$. They further include the direct sum of block and band matrices, and many other possibilities.

Now we state the main result of this paper, applied to the matrix $(d_N)^{-1/2}X_N$. From the definition, it is easy to see that this matrix is doubly stochastic in the following sense:
$$
\mathbb{E}[(d_N)^{-1/2}X_N]=0,\quad \mathbb{E}[(d_N)^{-1}X_NX_N^*]=\mathbb{E}[(d_N)^{-1}X_N^*X_N]=I_N.
$$

For two sequences of positive real numbers $a_n,b_n$, we use the shorthand notation $a_n\gg b_n$ to mean $\frac{a_n}{b_n}\to\infty$ as $n\to\infty$ and $a_n\ll b_n$ to mean $\frac{a_n}{b_n}\to 0$ as $n\to\infty$.

\begin{theorem}\label{theorem1.1}  

Consider the random matrix model in Definition \ref{definebandmatrix}.
     Assume moreover that
    one of the following three conditions holds:
    \begin{enumerate}
        \item $g_{xy}$ are standard (real or complex) Gaussian variables and $d_N\gg(\log N)^3$, or
        \item $g_{xy}$ are real or complex random variables with mean 0, variance 1 that satisfy
        $$\lim_{N\to\infty} (\log N)^2 \max_{(x,y)\in E_N}{\|(d_N)^{-1/2}g_{xy}\|_\infty}=0,$$where $\|\cdot\|_\infty$ is the essential supremum of a random variable,
        \item $g_{xy}$ are i.i.d. (real or complex) symmetrically distributed, variance 1 random variables such that, for some $p>4$, we have  $\mathbb{E}[|g_{xy}|^p]<\infty$ and $d_N\gg N^\frac{2}{p-2}(\log N)^{\frac{4p}{p-2}}$.
    \end{enumerate} 
    
    Then the random matrix $(d_N)^{-1/2}X_N$ in Definition \ref{definebandmatrix} satisfies the following properties:
    \begin{enumerate}
        \item For any $\epsilon>0$, almost surely for $N$ sufficiently large, $\rho((d_N)^{-1/2}X_N)\leq 1+\epsilon$.
    
    \item Moreover, fix some $\epsilon>0$, let $C_N$ be a deterministic matrix with $O(1)$ rank and operator norm for each $N$, having no eigenvalues with modulus in $[1+\epsilon,1+3\epsilon]$ and $j$ eigenvalues $\lambda_1(C_N),\cdots,\lambda_j(C_N)$ for a $j=O(1)$ in $\{z\in\mathbb{C}:|z|>1+3\epsilon\}$. [The matrix $C_N$ can have an $O(1)$ number of eigenvalues in $\{z\in\mathbb{C}:|z|\leq 1+\epsilon\}.$]

    Then almost surely for $N$ sufficiently large, there are exactly $j$ eigenvalues denoted by $\lambda_1((d_N)^{-1/2}X_N+C_N),\cdots,\lambda_j((d_N)^{-1/2}X_N+C_N)$ of $(d_N)^{-1/2}X_N+C_N$ in $\{z\in\mathbb{C}:|z|>1+2\epsilon\}$, and after labeling, they have the following asymptotic: $$\lambda_i((d_N)^{-1/2}X_N+C_N)=\lambda_i(C_N)+O((\log\log N)^{-\frac{3}{4}})$$ for each $1\leq i\leq j$.
    \end{enumerate}

\end{theorem}

\begin{remark}\label{remark1.56}
     A quantitative, high probability statement can be found later in this paper, and a discussion on the quantitative estimates in this theorem are presented in Remark \ref{quantifiedremarks}. 
     Although excluded in Definition \ref{definebandmatrix}, the case where the graph $G_N$ in Definition \ref{definebandmatrix} may have self-loops can be handled via a perturbation argument, see Theorem \ref{theoremlines404} for the Gaussian and bounded case in which the diagonal entries are assumed to be i.i.d.
\end{remark}

\begin{remark}
    In case (3) of this theorem we assumed random variables have a symmetric distribution. This is not fundamentally necessary and is only used in a truncation argument, so after truncation the truncated random variable still has mean 0. This mean zero property helps us to solve a matrix Dyson equation \eqref{matrixdysomequationsfag} without much effort, but we can certainly consider a general centered non-symmetric distribution and show the error of the mean in the truncation just vanishes (under a stronger moment assumption). We omit the details. Note, in contrast, that in the bounded case, case (2), no symmetry condition is imposed.  
\end{remark}

\subsection{The elliptic case}

In this section we state an elliptic analogue of Theorem \ref{theorem1.1}.

Recall that an $N\times N$ complex random matrix $\mathcal{F}_N=(e_{ij})_{i,j=1,\cdots,N}$ is said to be elliptic if each entry $e_{ij}$ has mean zero and variance one, that the covariance $\operatorname{Cov}(e_{ij},e_{ji})=\rho\in(-1,1)$ for each $i\neq j$, and that all entries of $\mathcal{F}_N$ are independent modulo symmetry constraint. This definition interpolates the i.i.d. case $\rho=0$ and the symmetric case $\rho=1$.
Under some additional assumptions, it is proved in \cite{nguyen2015elliptic} that the empirical eigenvalue density of $N^{-1/2}\mathcal{F}_N$ converges to the elliptic law, that is, the uniform distribution on the ellipse $\mathcal{E}_\rho$ 
\begin{equation}\label{ellipselaw}
\mathcal{E}_\rho:=\{z\in\mathbb{C}:\frac{\operatorname{Re}(z)^2}{(1+\rho)^2}+\frac{\operatorname{Im}(z)^2}{(1-\rho)^2}\leq 1\}
\end{equation}
in the complex plane. 
Assuming further that $e_{ij}$ has a finite fourth moment, \cite{article1221} proved that a finite rank perturbation result also holds for random elliptic matrices, which in the i.i.d. case $\rho=0$ reduces to Theorem \ref{zeroththeorem} (3) (i.e., the result of \cite{tao2013outliers}) and in the Wigner case $\rho=1$ reduces to the result of \cite{peche2006largest}
when the perturbation matrix $C_n$ is self-adjoint.

Analogously to Theorem \ref{theorem1.1}, we now consider random band matrices with elliptic correlations, which generalizes Hermitian random band matrices and random band matrices with independent entries in Definition \ref{definebandmatrix}. As will be explained in Section \ref{proofoutline}, currently there is no written proof on the convergence of the eigenvalue density towards the elliptic law for a random band matrix with elliptic correlation and with bandwidth sublinear in $N$. However, we show in the following theorem that outliers in the banded model can still be determined under elliptic correlation, thus generalizing \cite{article1221} to a wide class of band matrices with elliptic correlations.

In this work we consider any covariance $\rho\in\mathbb{C}:|\rho|\leq 1$ without assuming $\rho$ is a real number. In this case we need to define the region $\mathcal{E}_\rho$ slightly differently as follows, for any $\rho\in\mathbb{C}$, $|\rho|\leq 1,$
\begin{equation}\label{newellipselaw}
\mathcal{E}_\rho:=\{z\in\mathbb{C}:z=x+\rho\bar{x},\quad \text{ for some }|x|\leq 1,\quad x\in\mathbb{C}\}.
\end{equation}
We note that the new definition of $\mathcal{E}_\rho$ degenerates to the old one when $\rho$ is real:
\begin{lemma}\label{triviallemmas}
    Assume that $\rho\in (-1,1)\setminus\{0\}\subset\mathbb{R}$, then the set $\mathcal{E}_\rho$ defined in \eqref{newellipselaw} coincides with the ellipse defined in \eqref{ellipselaw}. When $\rho=0$ the statement is trivial. In general, we have 
    $$
\mathcal{E}_\rho=e^{i\arg(\rho)/2}\mathcal{E}_{|\rho|},
    $$ that is, $\mathcal{E}_\rho$ is $\mathcal{E}_{|\rho|}$ rotated by the angle $\arg(\rho)/2$ in the counterclockwise direction across the origin. For $|\rho|=1$, $\mathcal{E}_{\rho}$ is understood through \eqref{newellipselaw} instead of the ellipse formula \eqref{ellipselaw}.
\end{lemma}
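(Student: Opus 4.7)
The lemma is a direct computation and I would approach it in two short steps.

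First, to handle the real case, I would simply parametrize $x = u + iv$ with $u,v \in \mathbb{R}$, so that $\bar{x} = u - iv$, and compute
\begin{equation*}
z = x + \rho \bar{x} = (1+\rho)u + i(1-\rho)v
\end{equation*}
whenever $\rho \in \mathbb{R}$. Reading off real and imaginary parts gives $\operatorname{Re}(z) = (1+\rho)u$ and $\operatorname{Im}(z) = (1-\rho)v$. The constraint $|x|^2 = u^2 + v^2 \leq 1$ then translates directly into
\begin{equation*}
\frac{\operatorname{Re}(z)^2}{(1+\rho)^2} + \frac{\operatorname{Im}(z)^2}{(1-\rho)^2} \leq 1,
\end{equation*}
which is exactly the ellipse in \eqref{ellipselaw}. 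Conversely, given any $z$ satisfying this ellipse inequality, one recovers $u = \operatorname{Re}(z)/(1+\rho)$ and $v = \operatorname{Im}(z)/(1-\rho)$, and $x = u+iv$ gives the required preimage. The degenerate values $\rho = \pm 1$ should be checked separately: there one factor $(1\pm\rho)$ vanishes and $\mathcal{E}_\rho$ collapses to the segment $[-2,2]$ (respectively $[-2i,2i]$), which matches the degenerate ellipse.

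Second, for the general complex case, I would use a simple rotation substitution. Write $\rho = |\rho| e^{i\theta}$ with $\theta = \arg(\rho)$ and set $x = e^{i\theta/2} y$, so that $\bar{x} = e^{-i\theta/2} \bar{y}$ and $|x| = |y|$. A direct calculation then gives
\begin{equation*}
x + \rho \bar{x} = e^{i\theta/2} y + |\rho| e^{i\theta} e^{-i\theta/2} \bar{y} = e^{i\theta/2}\bigl(y + |\rho|\bar{y}\bigr).
\end{equation*}
Because the map $y \mapsto e^{i\theta/2} y$ is a bijection of the closed unit disk onto itself, as $x$ ranges over $\{|x|\leq 1\}$ the variable $y$ ranges over the same disk. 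Hence
\begin{equation*}
\mathcal{E}_\rho = e^{i\theta/2}\{\,y + |\rho|\bar{y} : |y|\leq 1\,\} = e^{i\arg(\rho)/2}\mathcal{E}_{|\rho|},
\end{equation*}
which is the rotation statement. Combining this with the first step (applied to the real parameter $|\rho|\in[0,1]$) identifies $\mathcal{E}_\rho$ as a rotated filled ellipse.

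There is no real obstacle here; the only minor care needed is at $\rho = 0$, where $\arg(\rho)$ is undefined but both $\mathcal{E}_0$ and $e^{i\arg(\rho)/2}\mathcal{E}_0$ are interpreted as the closed unit disk, and at $|\rho|=1$, where the ellipse degenerates to a segment and the rotation formula still holds.
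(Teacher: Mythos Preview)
Your proof is correct and follows essentially the same route as the paper: for real $\rho$ you split $x$ into real and imaginary parts to read off the ellipse inequality, and for complex $\rho$ you use the same rotation substitution $x = e^{i\theta/2}y$ (equivalently, the paper multiplies both sides by $e^{-i\arg(\rho)/2}$) to reduce to the real-parameter case. Your version is in fact a bit more careful, explicitly handling the converse direction and the degenerate values $\rho=0,\pm 1$, which the paper leaves implicit.
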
 
This lemma follows from routine computations, and see Section \ref{proofofmainresults} for the proof. 

After submitting the first draft of the manuscript, we learned that elliptic random matrices with general $\rho\in\mathbb{C}$ are previously considered in \cite{girko1986elliptic} where elliptic law was introduced, and in \cite{gotze2015minimal} where they proved the convergence of empirical spectral measure to $\mathcal{E}_\rho$ for any $|\rho|<1$ and for the (dense) elliptic matrix. (Instead of the banded matrix we consider here.)

We continue with the study of outliers. For any $\epsilon>0$ we denote by $\mathcal{E}_{\rho,\epsilon}$ the following region which is a neighborhood of $\mathcal{E}_\rho$ in the complex plane: 
$$
\mathcal{E}_{\rho,\epsilon}:=\{z\in\mathbb{C}:\operatorname{dist}(z,\mathcal{E}_\rho)\leq\epsilon\}.
$$ We also use the notation $\mathbb{D}$ to mean the unit disk in the complex plane $\mathbb{D}:=\{z\in\mathbb{C}:|z|\leq 1\}$, and for any $\epsilon>0$, $(1+\epsilon)\mathbb{D}$ denotes the set $\{z\in\mathbb{C}:|z|\leq 1+\epsilon\}$. Clearly, $\mathcal{E}_{0,\epsilon}=(1+\epsilon)\mathbb{D}$.

We first define the model of elliptic random matrices we consider.

\begin{Definition}(Elliptic random matrices on regular graphs)\label{ellipticdef}
    Let $N\in\mathbb{N}_+$ and $d_N\in[1,N]$. Consider a family of undirected $d_N$-regular graphs $G_N=([N],E_N)$ such that no self-loops $(x,x),x\in[N]$ exist in $E_N$.

Fix a covariance parameter $\rho\in\mathbb{C}:|\rho|\leq 1$.
Define a random matrix $X_N$ on $[N]^2$ such that for any $x<y$, $(x,y)\in E_N$, we set $((X_N)_{(x,y)},(X_N)_{(y,x)})$ to be a pair of mean $0$, variance $1$ random variables such that $$\mathbb{E}[(X_N)_{(x,y)}(X_N)_{(y,x)}]=\rho.$$
If $(x,y)\notin E_N$ we set the pair to be $(0,0)$. We assume that all the random variables $((X_N)_{(x,y)},(X_N)_{(y,x)})_{(x,y)\in E_N}$ are mutually independent over the edges $(x,y)\in E_N$. 

When deriving almost sure events in the $N\to\infty$ limit, it is understood that the entry pairs defined on all edge pairs appearing in all matrices form a single infinite i.i.d. family of pairs $(g_{1,k},g_{2,k})$, and after an arbitrary labeling of edges, the pairs used for the $N$-th matrix are taken from the first $O(Nd_N)$ pairs.

\end{Definition}

We then state the main result concerning elliptic band matrices.
\begin{theorem}\label{TheEllipticTheorem} Let $X_N$ be an elliptic random matrix as in Definition \ref{ellipticdef}.
For the entry distributions, we further assume that  one of the following three conditions holds: \begin{enumerate}
    \item   $d_N\gg(\log N)^3$ and the matrix $X_N$ has jointly Gaussian entries;  \item  $d_N\gg(\log N)^4$ and the complex-valued entries of $X_N$ satisfy $$\lim_{N\to\infty} (\log N)^2 (d_N)^{-1/2}\max_{(x,y)\in E_N}{\|\max(|(X_N)_{(x,y)}|,|(X_N)_{(y,x)}|)\|_\infty}=0;$$ \item For each $(x,y)\in E_N:x<y$, the pair $((X_N)_{(x,y)},(X_N)_{(y,x)})$ has the same law as $(g_1,g_2)$, where $g_1,g_2$ are two complex-valued symmetric random variables of mean $0$, variance $1$, having the same distribution: $g_1\stackrel{\text{law}}{=}g_2$ and $\mathbb{E}[g_1g_2]=\rho$. We require that, for some $p>4$ and $d_N\gg N^\frac{2}{p-2}(\log N)^{\frac{4p}{p-2}}$, we have $\mathbb{E}[|g_i|^p]<\infty,i=1,2$.
\end{enumerate}
Then we have the following conclusion: \begin{enumerate}
    \item For any $\epsilon>0$, almost surely as $N$ tends to infinity there is no eigenvalue of $(d_N)^{-1/2}X_N$ in $\mathbb{C}\setminus\mathcal{E}_{\rho,\epsilon}$.

\item Moreover, fix $\epsilon>0$ and consider a family $C_N$ of deterministic $N$ by $N$ matrices with $O(1)$ rank and operator norm, having no eigenvalues that satisfy 
$$
\lambda_i(C_N)+\frac{\rho}{\lambda_i(C_N)}\in\mathcal{E}_{\rho,3\epsilon}\setminus\mathcal{E}_{\rho,\epsilon}\text{ and } |\lambda_i(C_N)|>1,
$$
and having $j=O(1)$ eigenvalues $\lambda_1(C_N),\cdots,\lambda_j(C_N)$ such that $$\lambda_i(C_N)+\frac{\rho}{\lambda_i(C_N)}\in\mathbb{C}\setminus\mathcal{E}_{\rho,3\epsilon},\text{ and }|\lambda_i(C_N)|> 1,\quad\forall 1\leq i\leq j.$$ 

Then almost surely for $N$ large enough, there are precisely $j$ eigenvalues of $(d_N)^{-1/2}X_N+C_N$: $\lambda_i((d_N)^{-1/2}X_N+C_N)$, $i=1,\cdots,j$ in $\mathbb{C}\setminus\mathcal{E}_{\rho,2\epsilon}$ that satisfy, after a suitable relabeling, $$\lambda_i((d_N)^{-1/2}X_N+C_N)=\lambda_i(C_N)+\frac{\rho}{\lambda_i(C_N)}+O((\log\log N)^{-\frac{3}{4}})$$ for each $1\leq i\leq j$.
\end{enumerate}

\end{theorem}

\subsection{A master theorem}\label{section247}

In this paper we will not prove Theorem \ref{theorem1.1} and \ref{TheEllipticTheorem} directly, but rather deduce them as special cases of the following master theorem. 

Throughout this paper we use the notation $\mathbf{1}$ to denote an identity matrix of finite dimension, and its dimension will be clear from the context.

We also use the notation “$\operatorname{id}$” for the identity operator on the space of $N\times N$ square matrices.

A major observation here is that the regular graph structures in Theorem \ref{theorem1.1} and \ref{TheEllipticTheorem} are not the most intrinsic condition: for an $N$ by $N$ random matrix $X$ with $\mathbb{E}[X]=0,$ the intrinsic condition guaranteeing ellipticity should be 
$\mathbb{E}[X^2]=\rho \mathbf{1}$ for some fixed constant $\rho$. The assumption that the variance profile is doubly stochastic, namely $\mathbb{E}[XX^*]=\mathbb{E}[X^*X]=\mathbf{1}$, is sufficiently general to cover both the homogeneous case of a standard i.i.d. matrix and a standard elliptic matrix, and the band matrix case considered in Theorem \ref{theorem1.1} and \ref{TheEllipticTheorem}. We will show this is indeed the case: for a very general random matrix $X$ with mean zero and satisfying some constraints, then its behavior in terms of outliers is completely determined whenever $\mathbb{E}[X^2]=\rho \mathbf{1}$ and $\mathbb{E}[XX^*]=\mathbb{E}[X^*X]=\mathbf{1}$ for some $|\rho|\leq 1$. This generalizes the Hermitian case where $\rho=1$, as considered in Theorem 2.7 of \cite{bandeira2024matrix}.

Before presenting our master theorem, we introduce some notation on very general random matrices from \cite{bandeira2023matrix} and \cite{brailovskaya2022universality}.

In the Gaussian case, consider the following random matrix model as in \cite{bandeira2023matrix}, Section 1. Consider fixed matrices $A_0,\cdots,A_n\in M_N(\mathbb{C})$, where $M_N(\mathbb{C})$ denotes the set of $N$ by $N$ square matrices with complex entries. Let $g_1,\cdots,g_n$ be independent standard real Gaussian variables with mean 0 and variance 1. Consider a canonical model \begin{equation}\label{gaussianmatrixform}
    X:=A_0+\sum_{i=1}^n g_i A_i. 
\end{equation} (There are certainly many different ways to form a decomposition as \eqref{gaussianmatrixform}. And for our main usage in this paper, we will work in the case where the nonzero entries of $A_i$ have disjoint supports, so that directly replacing the $g_i$ by other random variables will automatically give rise to the non-Gaussian matrix. The use of independent Gaussian variables and many $A_i$ here is essential: replacing the sum by a simple Gaussian $gA$ is plausible but will hide the nice independence structures among entries of $X$, making our applications infeasible.)

We need to introduce a few more parameters on $X$.

For a Gaussian matrix $X$ \eqref{gaussianmatrixform} we define its covariance profile $\operatorname{Cov}(X)\in M_{N^2}(\mathbb{C})$ via 
$$
\operatorname{Cov}(X)_{ij,kl}=\sum_{s=1}^n (A_s)_{ij}\overline{(A_s)_{kl}} 
$$ and we now define 
$$\begin{aligned}
&v(X)^2=\|\operatorname{Cov}(X)\|=\sup_{\operatorname{Tr}|M|^2\leq 1}\sum_{s=1}^n |\operatorname{Tr}[A_sM]|^2,\\
&\sigma(X)^2:=\|\mathbb{E}[(X-\mathbb{E}X)^*(X-\mathbb{E}X)]\|\wedge\|\mathbb{E}[(X-\mathbb{E}X)(X-\mathbb{E}X)^*]\|,\\
&\sigma_*(X)^2:=\sup_{\|v\|=\|w\|=1}\mathbb{E}[|\langle v,(X-\mathbb{E}X)w\rangle|^2],
\end{aligned}$$
and finally define $$\tilde{v}(X)^2:=v(X)\sigma(X).$$

We first give a heuristic explanation for why these quantities would be of interest and appear in the statement of the next theorem. The quantity $\sigma(X)$ measures the size of the matrix $X$ and is of order one. The quantity $v(X)$ and $\sigma_*(X)$ measure the size of each building block $A_i$ of $X$. Our smallness condition \eqref{smallness} on $\sigma_*(X)$ and $\tilde{v}(X)$ ensures $X$ is made up of sufficiently many independent blocks and each has a small size (this matches our requirement that the graph degree $d_N$ is at least $(\log N)^{O(1)}$ for specific models).

Our general theorem in the Gaussian case is as follows
\begin{theorem}\label{gaussianmastertheorem} Fix $\rho\in\mathbb{C}$ with $|\rho|\leq 1$.
    Consider an $N
    \times N$ Gaussian random matrix $X$ with \begin{equation}\label{regularityconditions}\mathbb{E}[X]=0,\quad \mathbb{E}[XX^*]=\mathbb{E}[X^*X]=\mathbf{1},\quad \mathbb{E}[X^2]=(\rho+D_N)\mathbf{1},\end{equation} where $D_N\in\mathbb{C}$ are complex numbers such that $\lim_{N\to\infty}(\log N)D_N=0,$ and that $|\rho+D_N|\leq 1$ for all $N$.

    Assume further that \begin{equation}\label{smallness}\tilde{v}(X)=o (\log N)^{-\frac{3}{4}},\quad \sigma_*(X)\leq (\log N)^{-1.5}.\end{equation}

    Then for any $\epsilon>0$, we can 
 find a constant $C_1>0$ depending only on $\rho$ and $\epsilon$, such that
    \begin{enumerate}
        \item 
    With probability at least $1-C_1N^{-1.5}$ there is no eigenvalue of $X$ lying in $\mathbb{C}\setminus\mathcal{E}_{\rho,\epsilon}$.
\item Consider deterministic $N$ by $N$ matrices $C_N$ of rank $O(1)$ and operator norm $O(1)$, having no eigenvalues that satisfy 
$$
\lambda_i(C_N)+\frac{\rho}{\lambda_i(C_N)}\in\mathcal{E}_{\rho,3\epsilon}\setminus\mathcal{E}_{\rho,\epsilon}\text{ and } |\lambda_i(C_N)|>1,
$$
and having $j=O(1)$ eigenvalues $\lambda_1(C_N),\cdots,\lambda_j(C_N)$ such that $$\lambda_i(C_N)+\frac{\rho}{\lambda_i(C_N)}\in\mathbb{C}\setminus\mathcal{E}_{\rho,3\epsilon},\text{ and }|\lambda_i(C_N)|> 1,\quad\forall 1\leq i\leq j.$$ 

Then with probability at least $1-C_1N^{-1.4}$, there exist precisely $j$ eigenvalues $\lambda_i(X+C_N)$, $i=1,\cdots,j$ of $X+C_N$ in $\mathbb{C}\setminus\mathcal{E}_{\rho,2\epsilon}$ and, after a possible relabeling of the eigenvalues, we have $$\lambda_i(X+C_N)=\lambda_i(C_N)+\frac{\rho}{\lambda_i(C_N)}+O((\log\log N)^{-3/4})$$ for each $1\leq i\leq j$.
\end{enumerate}

\end{theorem}

\begin{remark}\label{quantifiedremarks}
We have presented our main theorem with quantified probability and convergence estimates. These estimates are not meant to be optimal and are of illustrative use only. The probability estimate $1-C_1N^{-1.4}$ can easily be upgraded to be $1-C_1N^{-p}$  for any $p\in\mathbb{N}_+$, and we use the former as it is already sufficient for applying Borel-Cantelli lemma. 

The error term $(\log\log N)^{-3/4}$ is more delicate and can be replaced by any quantity that vanishes strictly slower than $(\log N)^{-p}$ and any $p>0$. We choose the term $(\log\log N)^{-3/4}$ for notational simplicity only. This appears to be the first time that an explicit convergence rate is proven for elliptic random matrices.

An interesting problem is whether the error term can decay faster than $(\log N)^{-p}$: we believe this cannot hold unconditionally but instead relies strongly on the structure of the random matrix $X$. For a square matrix with i.i.d. entries, fluctuations of outlying eigenvalues on the scale $N^{-1/2}$ around their deterministic limit are studied in \cite{bordenave2016outlier}. As we consider far more general non-Hermitian random matrices, we do not expect a similar pattern of fluctuations should hold for typical band matrices considered in this paper.

\end{remark}

In the general case, consider the following random matrix model as in \cite{brailovskaya2022universality}

\begin{equation}\label{ageneralX}
    X:=Z_0+\sum_{i=1}^n Z_i
\end{equation} where $Z_1,\cdots,Z_n$ are independent $N$ by $N$ random matrices having mean zero. For this general matrix model $X$ we define its covariance matrix $\operatorname{Cov}(X)$ as an $N^2\times N^2$ matrix satisfying 
\begin{equation}
    \operatorname{Cov}(X)_{ij,kl}:=\mathbb{E}[(X-\mathbb{E}X)_{ij}\overline{(X-\mathbb{E}X)_{kl}}].
\end{equation}

Then we can define $v(X),\sigma(X)$ and $\sigma_*(X)$ analogously as in the Gaussian model.
We also define the following function for $X$, where $\|Z_i\|$ is the operator norm of $Z_i$:
\begin{equation} R(X):=\left\|\max_{1\leq i\leq n}\|Z_i\|\right\|_\infty,\quad 
    \bar{R}(X):=\mathbb{E}\left[\max_{1\leq i\leq n}\|Z_i\|^2\right]^\frac{1}{2},
\end{equation}
where we use the notation $\|\cdot\|$ for the operator norm and the notation $\|Y\|_\infty$ for the essential supremum of a random variable $Y$.

Then our master theorem for non-Gaussian X is as follows. For this non-Gaussian model $X$, in addition to using a bound of $\tilde{v}(X)$ and $\sigma_*(X)$ to ensure that $X$ is made up of many blocks with small variance each, we also need a bound on the quantity $\overline{R}(X)$ to ensure that each individual component is small in absolute value: while this is trivially true in the Gaussian case, we need this bound in the general case.

\begin{theorem}\label{nongaussianmastertheorem}
    Let $X$ be an $N$ by $N$ random matrix in the form \eqref{ageneralX}. Assume that  $$\mathbb{E}[X]=0,\quad \mathbb{E}[XX^*]=\mathbb{E}[X^*X]=\mathbf{1},\quad \mathbb{E}[X^2]=(\rho+D_N)\mathbf{1},$$ where $D_N$ are complex numbers satisfying $\lim_{N\to\infty}(\log N)D_N=0$ and that $|\rho+D_N|\leq 1$ for all $N$. Assume that $$\tilde{v}(X)\leq (\log N)^{-1},\quad \sigma_*(X)\leq (\log N)^{-\frac{3}{2}},\quad R(X)\ll(\log N)^{-2}.$$ Then all the conclusions in Theorem \ref{gaussianmastertheorem} continue to hold for $X$.
\end{theorem}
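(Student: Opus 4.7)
The plan is to deduce Theorem \ref{nongaussianmastertheorem} from its Gaussian analogue Theorem \ref{gaussianmastertheorem} via a universality argument in the spirit of \cite{brailovskaya2022universality}. First I would construct a Gaussian surrogate $X_G$ of the form \eqref{gaussianmatrixform} whose coordinatewise first and second moments agree with those of $X$. By construction $\mathbb{E}[X_G]=0$, $\mathbb{E}[X_G X_G^*] = \mathbb{E}[X_G^* X_G] = \mathbf{1}$, $\mathbb{E}[X_G^2] = (\rho + D_N)\mathbf{1}$, and the covariance profile $\operatorname{Cov}(X_G)$ equals that of $X$, so that the parameters $\tilde v(X_G), \sigma(X_G), \sigma_*(X_G)$ inherit the bounds assumed on $X$. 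Hence Theorem \ref{gaussianmastertheorem} applies verbatim to $X_G$ and gives both the no-outlier statement outside $\mathcal{E}_{\rho,\epsilon}$ and the precise localization of the outliers of $X_G + C_N$ near $\lambda_i(C_N) + \rho/\lambda_i(C_N)$.

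Next I would invoke the non-Hermitian universality principle of \cite{brailovskaya2022universality}: under the assumption $\bar R(X)\ll(\log N)^{-2}$ together with the bounds on $\tilde v(X)$ and $\sigma_*(X)$, the pseudospectra of $X$ and $X_G$ are within Hausdorff distance $o(1)$ with polynomially high probability, and more precisely the smallest singular values $s_{\min}(z-X)$ and $s_{\min}(z-X_G)$ are uniformly comparable on compact sets. For conclusion (1), the Gaussian theorem supplies a constant lower bound $s_{\min}(z-X_G)\ge c(\epsilon)>0$ for every $z\in\mathbb{C}\setminus\mathcal{E}_{\rho,\epsilon}$; universality transfers this to $s_{\min}(z-X)>0$ pointwise, and a finite $N^{-O(1)}$-net in $z$ combined with the $1$-Lipschitz continuity of $z\mapsto s_{\min}(z-X)$ upgrades this to the uniform statement that no eigenvalue of $X$ lies outside $\mathcal{E}_{\rho,\epsilon}$.

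For conclusion (2), write $C_N=UV^*$ with $U,V\in\mathbb{C}^{N\times r}$, $r=O(1)$ and $\|U\|,\|V\|=O(1)$. Outside $\operatorname{spec}(X)$, a number $z$ is an eigenvalue of $X+C_N$ if and only if
\begin{equation*}
\det\bigl(1_r - V^*(z-X)^{-1}U\bigr)=0.
\end{equation*}
Theorem \ref{gaussianmastertheorem} applied to $X_G+C_N$ already encodes the concentration of $V^*(z-X_G)^{-1}U$ around $m(z)\,V^*U$, where $m(z)$ is the Stieltjes-type transform solving $\rho m(z)^2 - z\,m(z) + 1 = 0$ in $\mathbb{C}\setminus\mathcal{E}_\rho$. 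Transferring this low-rank resolvent statement to $X$ via universality and then applying Rouché's theorem to the analytic function $z\mapsto \det(1_r - V^*(z-X)^{-1}U)$ on small contours around each limiting outlier $\lambda_i(C_N)+\rho/\lambda_i(C_N)$ yields the right count and the quantitative location $\lambda_i(X+C_N) = \lambda_i(C_N)+\rho/\lambda_i(C_N)+o((\log\log N)^{-1/2})$, the rate being controlled by tracking $\bar R(X)(\log N)^{O(1)}\to 0$.

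The main obstacle is to push universality from its natural habitat of scalar singular values $s_{\min}(z-X)$ to the matrix-valued, rank-$r$ resolvent projection $V^*(z-X)^{-1}U$, which is what actually governs outlier locations. My plan is to linearize: apply the Brailovskaya--van Handel comparison to the $(2N+2r)\times(2N+2r)$ block matrix
\begin{equation*}
\begin{pmatrix} 0 & X-z & 0 & U \\ X^*-\bar z & 0 & V & 0 \\ 0 & V^* & 0 & 0 \\ U^* & 0 & 0 & 0 \end{pmatrix},
\end{equation*}
whose block inverse encodes both the Hermitization of $z-X$ and the low-rank projection $V^*(z-X)^{-1}U$; the parameters $\tilde v$, $\sigma_*$, $\bar R$ of this enlarged matrix are controlled by those of $X$ since the extra blocks are deterministic and of bounded norm. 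This reduces the resolvent-projection comparison to a pseudospectral comparison for the enlarged matrix, at which point the universality theorem applies and delivers the $o(1)$ comparison needed. Uniform control in $z$ on a compact contour is then routine via analyticity and a polynomial net, and the stated quantitative rates follow by making the $\bar R(X)(\log N)^{O(1)}$ dependence explicit throughout.
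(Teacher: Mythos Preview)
Your high-level plan (Gaussian surrogate plus universality transfer) is the same as the paper's, but you have misidentified the obstacle and proposed an unnecessary detour. The paper's proof is a one-line substitution: rerun the proof of Theorem~\ref{gaussianmastertheorem} on the same $2N\times 2N$ Hermitization $\mathcal{Y}_z$, replacing the Gaussian tools (Theorems~\ref{resolventconvergencegauss}, \ref{resolventconcentrationgauss}, \ref{spectraluniversality}) by their non-Gaussian counterparts (Theorems~\ref{theorem2.41}, \ref{generalconcentration}, \ref{theorem2.61}). These already give resolvent comparison $\|\mathbb{E}[(z\mathbf{1}-\mathcal{Y}_z)^{-1}]-\mathbb{E}[(z\mathbf{1}-G)^{-1}]\|$ and resolvent concentration for $\langle u,(z\mathbf{1}-\mathcal{Y}_z)^{-1}v\rangle$ with fixed vectors $u,v$. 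Since $r=O(1)$, the matrix $V^*(z-X)^{-1}U$ is determined by $r^2=O(1)$ such scalar quadratic forms, and the isotropic law for $X$ follows exactly as in the Gaussian case.

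The obstacle you describe---``pushing universality from scalar singular values to the matrix-valued resolvent projection''---is therefore not an obstacle at all: you do not need to manufacture resolvent information out of pseudospectral information, because resolvent comparison and concentration are what the Brailovskaya--van Handel framework (applied to the Hermitization) gives directly. Your $(2N+2r)\times(2N+2r)$ linearization is superfluous, and as stated your plan for it does not quite close: you say it ``reduces the resolvent-projection comparison to a pseudospectral comparison for the enlarged matrix'', but pseudospectral (i.e.\ smallest singular value) control of the enlarged matrix tells you only about invertibility, not about individual blocks of the inverse. To extract $V^*(z-X)^{-1}U$ you would still need a resolvent-level statement on the enlarged matrix---at which point you may as well apply it to the standard $2N\times 2N$ Hermitization and avoid the enlargement entirely.
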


\begin{remark}
    The constant $D_N\to 0$ in Theorem \ref{gaussianmastertheorem} and \ref{nongaussianmastertheorem} is used to take care of the diagonal entry of $X$ in Theorem \ref{theoremlines404}. 
\end{remark}

\begin{remark} Theorem \ref{TheEllipticTheorem} weakens a number of hypotheses that are traditionally imposed in the elliptic random matrix literature.
    In prior works \cite{article1221}, \cite{nguyen2015elliptic} and \cite{alt2022local}, they consider an elliptic matrix where each pair $(g_{ij},g_{ji})$ is independently distributed as a pair $(g_1,g_2)$, where $(g_1,g_2)$ belongs to the so-called $(\mu,\rho)$ family (\cite{nguyen2015elliptic}, Definition 1.6). This definition of $(\mu,\rho)$ family requires the real and imaginary parts of $g_1,g_2$ are independent and the covariance of the real and imaginary parts of $g_i$ should have a very specific form. Theorem \ref{TheEllipticTheorem} states that such restrictions are not necessary, and we do not even need the pairs $(g_{ij},g_{ji})$ and $(g_{kl},g_{lk})$ to be identically distributed: their real and imaginary parts can have a completely different covariance matrix for different edges $(i,j),(k,l)\in E$. As another weakening, we can consider any covariance $\rho\in\mathbb{C},|\rho|\leq 1$ without assuming $\rho$ is real, unlike in \cite{article1221}, \cite{nguyen2015elliptic} and \cite{alt2022local}. In these cited works, the cases $\rho=\pm 1$ are excluded as the least singular value bound in \cite{nguyen2015elliptic} no longer holds: in the Wigner case $\rho=1$ this still holds with a separate proof \cite{nguyen2012least} whereas in the anti-symmetric case $\rho=-1$ the matrix is always singular in odd dimensions. In our paper we do not rely on least singular value estimates as in \cite{nguyen2015elliptic}, so we treat $|\rho|=1$ in a unified way as for $|\rho|<1$, and in particular we cover the anti-symmetric case $\rho=-1$ which was largely unexplored before.

    \end{remark}

\begin{remark}\label{remark1.31.3aga}
    Theorem \ref{theorem1.1} and \ref{TheEllipticTheorem} also cover (homogeneous) sparse i.i.d. matrices and elliptic matrices. Indeed, we may take $G$ to be the complete graph on $[N]$ and take random variables $g_{xy}$ to be $g_{xy}=b_{xy}h_{xy}$ where $b_{xy}$ is a $\operatorname{Ber}(\frac{k_N}{N})$ random variable taking value 1 with probability $\frac{k_N}{N}$ and 0 otherwise, and $h_{xy}$ are bounded random variables having mean 0 and variance 1, independent from $b_{xy}$. Then clearly $(N/k_N)^{1/2}g_{xy}$ has mean $0$ and variance $1$, and as long as $k_N>(\log N)^5$ we have that $(N/k_N)^{1/2}g_{xy}$ satisfy the assumptions of Theorem \ref{theorem1.1}, (2) with $g_{xy}$ replaced by $(N/k_N)^{1/2}g_{xy}$ and $d_N:=N-1$. Indeed, the proof of Theorem \ref{theorem1.1} (2) allows distributions of $g_{xy}$ to be $N$-dependent so long as all quantitative estimates are satisfied. This establishes finite rank perturbations for $k_N$-sparse (homogeneous) i.i.d. matrices whenever $k_N\gg(\log N)^5$, which also appears to be new. 
    
    We note that due to the homogeneous structure and i.i.d. nature, the same result on outliers in Theorem \ref{theorem1.1} continues to hold for $k_N$ much smaller than $\log N$, and holds even when $k_N$ is a fixed constant. Also, the assumptions (1), (2), (3) in Theorem \ref{theorem1.1} can be considerably weakened: for a square matrix with i.i.d. entries only assuming entries have a finite second moment is sufficient.  These claims are recently proven in \cite{han2024finite} via the method of characteristic functions, and the proof is adapted from computations in \cite{bordenave2021convergence} and  \cite{coste2023sparse}. The results in \cite{han2024finite} are not accessible by techniques in this paper as this paper relies on a Hermitization procedure, and a Hermitian matrix with only finite second moment entries or constant average degree typically has unbounded operator norm. On the other hand, general band matrices are not accessible by techniques from \cite{han2024finite} either, as in this case the characteristic function method from \cite{bordenave2021convergence} and  \cite{coste2023sparse} breaks down. Also, in our master theorem \ref{gaussianmastertheorem} we can consider a very general square matrix $X$ without assuming independence of entries, and thus covers many interesting new applications (Theorem \ref{outliersofproducts}); but the characteristic function method requires strongly that all entries are independent.

For the elliptic case, the assumptions of Theorem \ref{TheEllipticTheorem} cover (homogeneous) sparse elliptic random matrices, which is the entrywise product of a standard elliptic random matrix as considered in \cite{article1221}, together with a symmetric random matrix $B=(b_{xy})$ with $B_{xy}=B_{yx}\sim \operatorname{Ber}(\frac{k_N}{N})$, for some $
k_N\gg(\log N)^5$, and that all entries of $B$ are independent modulo symmetry. The conclusion of Theorem \ref{TheEllipticTheorem} appears to be new even for such (homogeneous) sparse elliptic matrices as the fourth moment of each entry is not bounded, lying outside the assumptions in \cite{article1221}.

\end{remark}

\begin{remark}A somewhat orthogonal set of assumptions on non-Hermitian random matrices with inhomogeneous profile, is to assume an arbitrary variance profile but with a universal lower and upper bound on the variance of each entry. See \cite{alt2018local} and \cite{alt2021spectral} for local laws and sharp spectral radius bounds in this inhomogeneous case. The assumption that the variances of all entries are bounded from below appears to be crucial and excludes random band matrices (see \cite{cook1article},\cite{cook2article} for a weakening of this hypothesis, but still for dense matrices). 
    Whereas allowing sparsity, our approach instead relies strongly on the regularity hypothesis \eqref{regularityconditions} (the variance matrix is doubly stochastic) to yield a simple solution to the matrix Dyson equation \eqref{matrixdysomequationsfag}. To freely change the variance profile of each entry, one needs to find solutions to the matrix Dyson equation \eqref{matrixdysomequationsfag} with general coefficients, but this seems currently out of reach when the underlying graph $G_N$ is very sparse.
\end{remark}

\subsection{Applications: product of independent elliptic matrices / self loops}   Random matrix products are a central topic in random matrix theory, and the study of outliers can likewise be generalized to the product setting. Suppose we consider a class of independent non-Hermitian random matrices $X_N^{i},i=1,\cdots,m$ satisfying Definition \ref{definebandmatrix}, we can investigate the product matrix $(d_{N,1})^{-1/2}X_N^1\cdots(d_{N,m})^{-1/2}X_N^m$ and its finite-rank deformation $((d_{N,1})^{-1/2}X_N^1+A_N^1)\cdots((d_{N,m})^{-1/2}X_N^m+A_N^m)$, which is a multi-matrix version of Theorem \ref{theorem1.1}.

The work \cite{coston2020outliers} considered the product of independent matrices with i.i.d. entries and showed a similar result as in the one matrix case, that is, the conclusion of Theorem \ref{theorem1.1} continues to hold for certain deformations of product i.i.d. matrices. They also mentioned a similar problem: the product of independent elliptic matrices with general $\rho_i\in(-1,1)$. Indeed, \cite{o2015products} showed a remarkable universality phenomenon: the empirical spectral distribution of a product of ($m\geq 2$) independent elliptic random matrices converges to the $m$-fold circular law, just as in the case of a product of $m$ independent matrices with i.i.d. entries. They also used ideas in free probability to explain why this happens: the product of two free elliptic elements is $R$-diagonal, and thus has an isotropic Brown measure. However the tools in \cite{coston2020outliers} did not seem strong enough to study the outliers of product elliptic matrices due to strong dependence in the entries. After submitting the manuscript, we learned that the empirical spectral density of product of elliptic matrices was also previously studied in \cite{gotze2014one}.

We show a far-reaching strengthening of the result in \cite{coston2020outliers}: exactly the same result continues to hold when we consider the product of independent elliptic random matrices with parameters $\rho_i\in\mathbb{C}:|\rho_i|\leq 1$, and the conclusion continues to hold as we consider non-Hermitian band matrices as in Theorem \ref{theorem1.1} and \ref{TheEllipticTheorem}. Previously such results were only proven for random matrices with i.i.d. entries.

The main result for product elliptic matrices is the following theorem, which can be regarded as a corollary of our master theorem, Theorems \ref{gaussianmastertheorem} and \ref{nongaussianmastertheorem}.

\begin{theorem}\label{outliersofproducts} Let $m\geq 2$ be a fixed integer, and let $(d_{N,1})^{-1/2}X_N^1,\cdots,(d_{N,m})^{-1/2}X_N^m$ be independent $N\times N$ random matrices satisfying one of the following
\begin{itemize}
    \item (Independent case) Each $(d_{N,i})^{-1/2}X_N^i$ satisfies the assumption of Theorem \ref{theorem1.1},
 \item (Elliptic case) Each $(d_{N,i})^{-1/2}X_N^i$ satisfies the assumption of Theorem \ref{TheEllipticTheorem} with some $|\rho_i|\leq 1$, \end{itemize}and the entries of $X_N^i$ should simultaneously satisfy one of the three constraints on entries in Theorem \ref{theorem1.1} or Theorem \ref{TheEllipticTheorem} (i.e., for $i=1,\cdots,m$ they are simultaneously either Gaussian, or bounded, or having bounded $p$-th moment). Then 
 \begin{enumerate}\item For any $\epsilon>0$, almost surely as $N$ tends to infinity there is no eigenvalue of $$D_N^m:=(d_{N,1}\cdots d_{N,m})^{-1/2}X_N^1
\cdots X_N^m$$ lying outside of $(1+\epsilon)\mathbb{D}$. \item Moreover we have the following finite rank perturbation result. Let $A_N^1,\cdots,A_N^m$ be deterministic $N\times N$ matrices with $O(1)$ rank and operator norm. Then consider $$D_N^{m,1}:=\prod_{k=1}^m \left((d_{N,k})^{-1/2}X_N^k+A_N^k\right),\quad A_N:=\prod_{k=1}^m A_N^k.$$ Fix an $\epsilon>0$ and assume that there is no eigenvalue of $A_N$ lying in $\{z\in\mathbb{C}:1+\epsilon\leq|z|\leq 1+3\epsilon\}$ and that there are $j=O(1)$ eigenvalues $\lambda_1(A_N),\cdots,\lambda_j(A_N)$ outside $(1+3\epsilon)\mathbb{D}$. Then almost surely as $N$ tends to infinity, there are exactly $j$ eigenvalues $\lambda_1(D_N^{m,1}),\cdots,\lambda_j(D_N^{m,1})$ lying outside $(1+2\epsilon)\mathbb{D}$ and after rearrangement, we have
    $$\lambda_i(D_N^{m,1})=\lambda_i(A_N)+o(1),\quad i=1,\cdots,j.$$
\end{enumerate}    
\end{theorem}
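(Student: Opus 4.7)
\textbf{Proof strategy for Theorem \ref{outliersofproducts}.} The plan is to linearize the product via the standard $m\times m$ block-cyclic embedding and deduce the conclusion from the master theorem. Writing $\tilde X_k := (d_{N,k})^{-1/2} X_N^k$, define the $mN\times mN$ matrices
\[
Y := \begin{pmatrix} 0 & \tilde X_1 & & \\ & 0 & \ddots & \\ & & \ddots & \tilde X_{m-1} \\ \tilde X_m & & & 0 \end{pmatrix},\qquad
\tilde A := \begin{pmatrix} 0 & A_N^1 & & \\ & 0 & \ddots & \\ & & \ddots & A_N^{m-1} \\ A_N^m & & & 0 \end{pmatrix}.
\]
A direct computation shows that $Y^m$, $\tilde A^m$, and $(Y+\tilde A)^m$ are block-diagonal, with diagonal blocks the $m$ cyclic shifts of $D_N^m$, $A_N$, and $D_N^{m,1}$ respectively. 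Since cyclic shifts share the same nonzero spectrum, the nonzero eigenvalues of $Y$ (resp.\ $\tilde A$, $Y+\tilde A$) are precisely the $m$ distinct $m$-th roots of each nonzero eigenvalue of $D_N^m$ (resp.\ $A_N$, $D_N^{m,1}$), each with multiplicity $m$.

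Next I would verify that $Y$ satisfies the hypotheses of Theorem \ref{gaussianmastertheorem} or \ref{nongaussianmastertheorem} with $\rho=0$. The block structure gives $\mathbb E[Y]=0$ and $\mathbb E[YY^*]=\mathbb E[Y^*Y]=\mathbf 1_{mN}$ from $\mathbb E[\tilde X_k\tilde X_k^*]=\mathbb E[\tilde X_k^*\tilde X_k]=\mathbf 1_N$. The crucial observation is that each nonzero block of $Y^2$ equals $\tilde X_i\tilde X_{i+1}$ (indices cyclic), which has mean zero by independence of the factors; hence $\mathbb E[Y^2]=0$, so $\rho=0$ and $D_N=0$ for $Y$ \emph{regardless} of the individual covariance parameters $\rho_i$. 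This is the matrix-level manifestation of the fact that the product of free elliptic elements is $R$-diagonal with an isotropic Brown measure. The quantities $\tilde v(Y),\sigma_*(Y),\bar R(Y)$ inherit from those of the individual $\tilde X_k$ up to $m$-dependent constants; since each factor satisfies the hypotheses of Theorem \ref{theorem1.1} or \ref{TheEllipticTheorem}, the bounds required by the master theorem carry over. Part (1) of the master theorem, with $\mathcal{E}_{0,\epsilon'}=(1+\epsilon')\mathbb D$, rules out eigenvalues of $Y$ outside $(1+\epsilon')\mathbb D$ with probability $1-O(N^{-1.4})$; choosing $\epsilon'$ with $(1+\epsilon')^m\le 1+\epsilon$, taking $m$-th powers and invoking Borel--Cantelli gives part (1).

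For part (2), note that $\tilde A$ has rank at most $m\cdot O(1)=O(1)$ and operator norm $O(1)$. The spectral hypothesis on $A_N$ transfers to $\tilde A$: picking radii $1<r_1<r_2<r_3$ with $r_i^m$ interpolating $1+\epsilon,1+2\epsilon,1+3\epsilon$, the assumptions that $A_N$ has no eigenvalues of modulus in $[1+\epsilon,1+3\epsilon]$ and has $j$ eigenvalues outside $(1+3\epsilon)\mathbb D$ translate (using that eigenvalues of $\tilde A$ are $m$-th roots of those of $A_N$) into the analogous exclusion/inclusion statement for $\tilde A$ relative to $r_1,r_3$, with $jm$ outliers. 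Applying part (2) of the master theorem to $Y+\tilde A$ yields $jm$ outliers of $Y+\tilde A$ outside $r_2\mathbb D$, each approximating an eigenvalue of $\tilde A$ within $o((\log\log N)^{-1/2})$. Since these outliers are $O(1)$ in modulus and $z\mapsto z^m$ is Lipschitz on bounded sets, taking $m$-th powers recovers $j$ outliers of $D_N^{m,1}$ outside $(1+2\epsilon)\mathbb D$, each matching $\lambda_i(A_N)$ up to $o(1)$.

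The two principal obstacles are: (i) controlling $\tilde v(Y),\sigma_*(Y),\bar R(Y)$ sharply enough, which amounts to expressing $Y$ in the canonical forms \eqref{gaussianmatrixform} or \eqref{ageneralX} built from the factor decompositions of the individual $\tilde X_k$ and checking that the block embedding distorts each parameter only by a multiplicative constant depending on $m$, so the $(\log N)^{-p}$ budget is preserved; and (ii) handling the multivaluedness of the $m$-th root when translating outliers back from $Y+\tilde A$ to $D_N^{m,1}$. For (ii), each outlier $\lambda_i(A_N)$ produces $m$ distinct $m$-th roots in the spectrum of $\tilde A$, near each of which the master theorem supplies a unique outlier of $Y+\tilde A$; a continuity (Rouché-type) argument, or direct book-keeping of the $m$ conjugate diagonal blocks of $(Y+\tilde A)^m$, shows that these $m$ perturbed roots are sent by $z\mapsto z^m$ to a \emph{single} outlier of $D_N^{m,1}$ counted with multiplicity $m$ in the linearization, which is therefore within $o(1)$ of $\lambda_i(A_N)$, completing the proof.
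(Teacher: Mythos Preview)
Your proposal is correct and follows essentially the same route as the paper: the block-cyclic linearization $Y$ (the paper's $\mathcal{X}_N$), the verification that $\mathbb{E}[Y^2]=0$ for $m\ge 2$ regardless of the $\rho_i$, application of the master theorems with $\rho=0$, and the passage from outliers of $Y+\tilde A$ back to $D_N^{m,1}$ via $z\mapsto z^m$. The paper treats the finite $p$-th moment case (3) by the same truncation argument used for Theorems~\ref{theorem1.1} and~\ref{TheEllipticTheorem}, which you should mention explicitly rather than folding into obstacle (i).
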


The special case $m=2$ and $\rho_1=\rho_2=1$ implies the eigenvalues of the product of two independent Wigner matrices are contained in $(1+\epsilon)\mathbb{D}$ with high probability for any $\epsilon>0$.

As a side remark, in Theorem \ref{outliersofproducts}, we can weaken the independence assumption to only requiring the neighboring matrices $X_N^j,X_N^{j+1}$ to be independent for all $j$, with indices $j$ understood modulo $m$. The latter weakened assumption is often still sufficient to justify the assumptions of Theorem \ref{gaussianmastertheorem}.

\begin{remark}
    For any non-commutative polynomial $P$ with $m+n$ non-commutative variables, one may consider the outliers of $P$ evaluated at $m$ independent random matrices and $n$ deterministic matrices. One also assumes the $n$ deterministic matrices have a strong limit in free probability. For random matrices with i.i.d. entries this was investigated in \cite{belinschi2021outlier}. For non-Hermitian band matrices, it would be possible to generalize Theorem \ref{outliersofproducts} to outliers of any polynomial $P$, but this requires considerably more effort and is left for further study.
\end{remark}

\subsubsection{Self-loops}
In Theorem \ref{theorem1.1} and \ref{TheEllipticTheorem} we have assumed that no self-loops exist on the graph $G_N$, but this is not fully necessary. In the next theorem we consider the case with self-loops:

\begin{theorem}\label{theoremlines404}
    Let $N\in\mathbb{N}_+$ and $d_N\in[1,N]$. Consider a family of undirected $d_N$-regular graphs $G_N=([N],E_N)$ where each self-loop $(x,x),x\in [N]$ is in $E_N$. Here $d_N$-regular means that the sum of each row of the adjacency matrix equals $d_N$. Fix a covariance parameter $\rho\in\mathbb{C}:|\rho|\leq 1$. Define a random matrix $X_N$ on $[N]^2$ such that for each $(x,y)
    \in E_N,x<y$, we set $((X_N)_{x,y},(X_N)_{y,x})$ to be a pair of mean 0, variance 1 random variables with $\mathbb{E}[(X_N)_{x,y}(X_N)_{y,x}]=\rho$, and different pairs of entries are mutually independent. We further assume that the diagonal entries $(X_N)_{x,x}$ are i.i.d. copies of a mean 0, variance 1 random variable $\xi$ and are independent from the off-diagonal entries. If $(x,y)\notin E_N$ we set the pair of entries to be $(0,0)$. Assume that one of the following two conditions holds:
\begin{enumerate}
    \item   $d_N\gg(\log N)^3$ and the matrix $X_N$ has jointly Gaussian entries;  \item  $d_N\gg(\log N)^4$ and the complex-valued entries of $X_N$ satisfy $$\lim_{N\to\infty} (\log N)^2 (d_N)^{-1/2}\max_{(x,y)\in E_N}{\|\max(|(X_N)_{(x,y)}|,|(X_N)_{(y,x)}|)\|_\infty}=0.$$
    For self loops $(x,x)\in E_N$ this condition reads $\lim_{N\to\infty}(\log N)^2(d_N)^{-1/2}\|\xi\|_\infty=0.$
    \end{enumerate}
    Then the same conclusion as in Theorem \ref{TheEllipticTheorem} holds verbatim in this case with self-loops.
\end{theorem}

We can also consider a finite $p$-th version of the theorem (where we also assume the same $p$-th moment bound on $\xi$ and assume $\xi$ is symmetrically distributed), and the proof should follow the same line. We omit the details for simplicity.

\subsection{A literature review of other techniques and a brief proof outline}\label{proofoutline}
Now we briefly discuss the context of the problem investigated, and illustrate the major difficulties and key proof strategies.

For random band matrices with independent entries with bandwidth $d_N$ (a special case of Definition \ref{definebandmatrix}), a major open problem (see \cite{cook1article},  \cite{cook2article}, \cite{tikhomirov2023pseudospectrum}) is to prove the convergence of the empirical spectral density to a deterministic limit. The limit should be the circular law as long as $d_N$ grows with $N$, which has been proved for sparse i.i.d. matrices in \cite{rudelson2019sparse}. The major technical difficulty to proving the circular law (see for example \cite{ WOS:000281425000010}) is to prove an estimate on the least singular value of $z\mathbf{1}-(d_N)^{-1/2}X_N$ for a.e. $z\in\mathbb{C}$, so that Girko's Hermitization technique \cite{girko2article} \cite{girkoarticle} can be rigorously justified. This step is highly model specific and the proof techniques for sparse i.i.d. matrices \cite{rudelson2019sparse} do not generalize to arbitrary band matrices which has much less homogeneity. Recently, \cite{jain2021circular} and \cite{tikhomirov2023pseudospectrum} introduced new methods to prove the circular law for periodic block band matrices and periodic band matrices for bandwidth $d_N\gg N^{33/34}$. Later, \cite{han2024circular}, \cite{han2025circular}, \cite{han2025circular2} pushed the validity of circular law much further and up to $d_N\geq N^{1/2+c}$ for a wide class of variance profiles, and for any $d_N$ growing to infinity when considering the block tridiagonal model. Yet, for an arbitrary doubly stochastic variance profile, the case of smaller $d_N$, especially $d_N\ll\sqrt{N}$, still remains largely open.

The spectral radius for inhomogeneous random matrices is much easier to study thanks to the method of moments. Via computing the trace of a sufficiently large $\ell\sim\log N$ power of $(d_N)^{-1/2}X_N$, the spectral radius of $(d_N)^{-1/2}X_N$ can be bounded with very high probability. In \cite{benaych2020spectral}, Theorem 2.11 and Remark 2.13, a method is introduced to bound the spectral radius for inhomogeneous non-Hermitian random matrix that covers $(d_N)^{-1/2}X_N$ when $d_N\geq N^\epsilon$ for any $\epsilon>0$. In Theorem \ref{theorem1.1} we illustrate the same spectral radius bound can be achieved whenever $d_N=(\log N)^5$. We note that when $g_{xy}$ are standard (real) Gaussian variables, \cite{bandeira2016sharp}, Theorem 3.1 proved that $\|(d_N)^{-1/2}X_N\|_{op}\leq 2(1+o(1))$ with high probability whenever $d_N\gg\log N$. This result is optimal in the sparsity of $d_N$ and implies $\rho((d_N)^{-1/2}X_N)\leq2(1+o(1))$, but the operator norm bound loses a factor 2 compared with the spectral radius bound $1+o(1)$.

The problem of determining the spectral outliers of $(d_N)^{-1/2}X_N+C_N$ for some finite rank perturbation $C_N$, does not appear to be studied before in such an inhomogeneous setting, and is the main topic of the current work. On one hand, finding spectral outliers is more tractable than proving the empirical density convergence: We do not need to estimate the least singular value of $X_N$, which is a notoriously difficult problem (see for instance \cite{jain2021circular}, \cite{tikhomirov2023pseudospectrum}). We often only need a least singular value estimate for $(d_N)^{-1/2}X_N-z$ for $z$ lying outside the support of the limiting circular law spectrum. This is much easier to derive, see Theorem \ref{theorem2.3} and estimates \eqref{yzfree}, \eqref{singularsupports}.

On the other hand, it appears that all existing methods on this topic require strong homogeneity on the variance profile of $X_N$. The first work for non-Hermitian matrices \cite{tao2013outliers} made use of a moment comparison to the Ginibre ensemble, and later works either followed a similar line, or resorted to proving an isotropic law (see \cite{article1221}) whose proof does not carry over to general inhomogeneous matrices $X_N$ with $d_N=o(N)$. The method of moments, or some modifications of it, might be a promising route to study outliers of $(d_N)^{-1/2}X_N+C_N$. However, as the eigenvalues are complex, its distribution is not completely determined by the moments. The method of characteristic functions can alternatively be used to detect spectral outliers, see \cite{bordenave2021convergence}. However, for this method to work we need to prove joint convergence of traces of $(d_N)^{-1/2}X_N$ to a family of independent Gaussian variables, which is simply not the case in the current setting of inhomogeneous random matrices on regular graphs. Indeed, the characteristic function $\det(\mathbf{1}-z(d_N)^{-1/2}X_N)$ does not form a tight family of holomorphic functions for $|z|\leq 1-\epsilon$, so that the method breaks down completely. Relevant moment computations for band matrices, also called genus expansion when the entries are Gaussian distributions, can be found in \cite{dubach2021words} for the i.i.d. case and \cite{au2021finite} for the Hermitian case. These genus expansion formulas (asymptotic freeness up to first order) are only proven to be effective for $d_N\gg N^{1/3}$ and $d_N\gg N^{1/2}$ respectively, see the respective papers \cite{dubach2021words} and \cite{au2021finite}, falling short of the smaller $d_N$ case. It is not clear whether the results in \cite{dubach2021words} and \cite{au2021finite} can be extended to all polynomially growing $d_N\gg N^{O(1)}$ and we do not pursue it here. Therefore, we see that genus expansion is ineffective for $d_N=o(N^{1/3})$ and the characteristic function technique is ineffective for any $d_N=o(N)$, and completely new techniques are needed here.

We will first take a standard Hermitization procedure and consider for any $z\in\mathbb{C}$, $$
\mathcal{Y}_z:=\begin{pmatrix}
    0& X-z\mathbf{1}\\X^*-\bar{z}\mathbf{1}&0
\end{pmatrix}.
$$
 We need to show that $\sigma_{min}(\mathcal{Y}_z)>0$ when $z$ is away from the unit disk (or ellipse $\mathcal{E}_{\rho}$), and prove an isotropic law for $\mathcal{Y}_z$ for $z$ in the same region. The inhomogeneous sparse variance pattern of $X$ makes standard techniques ineffective, and we will make use of recent advances in free probability and universality principles for highly inhomogeneous self-adjoint random matrices, which are illustrated in \cite{bandeira2023matrix} for the Gaussian case and \cite{brailovskaya2022universality} for the general case. The idea is to consider a free probability model $\mathcal{Y}_{z,\text{free}}$ defined from $\mathcal{Y}_z$, and the assumptions on $X$ in Theorem \ref{gaussianmastertheorem} (the variance profile being doubly stochastic and $\mathbb{E}[X^2]$ is a multiple of identity) make the spectral properties of $\mathcal{Y}_{z,\text{free}}$ remarkably transparent. Then universality principles in the cited papers enable us to transfer the estimates back to our random matrix $X$. Similar ideas have appeared in \cite{shou2024localization} which also partially inspires our work.

The main result of this paper can be seen as a non-Hermitian generalization of recent studies of spectral outliers for random band matrices. For periodically banded Hermitian matrices, the finite rank perturbation properties (the BBP transition introduced in \cite{peche2006largest}) are recently studied in \cite{au2023bbp}. Later \cite{bandeira2024matrix} introduced a very general framework for solving such problems in the self-adjoint setting, and established the BBP type phenomena for many other inhomogeneous random matrices with diverse patterns of variance profiles. Theorem \ref{gaussianmastertheorem} can be thought of as a non-Hermitian analogue of \cite{bandeira2024matrix}, Theorem 3.1. In contrast to the Hermitian case, we do not discuss the eigenvector overlaps associated to the outlying eigenvalues. While this might theoretically be possible to investigate, a closed form expression is too difficult to work out.

\section{Preparations: free probability and universality theorems}
\label{proof outlines}

\subsection{Free probability preliminaries}
We first recall some standard facts in free probability that can be found in \cite{nica2006lectures}.
Consider a $C^*$ probability space $(\mathcal{A},^*,\tau,\|\cdot\|)$, where $(\mathcal{A},^*,\|\cdot\|)$ is a $C^*$-algebra and $\tau$ is a faithful trace. Recall that a trace $\tau$ is called a faithful trace if $\tau(a^*a)=0$ implies $a=0$.

As $\tau$ is a faithful trace, the norm $\|\cdot\|$ is uniquely determined by (\cite{nica2006lectures}, Proposition 3.17)
\begin{equation}
    \|a\|=\lim_{k\to\infty}(\tau[(a^*a)^k])^\frac{1}{2k},\quad \forall a\in\mathcal{A}.
\end{equation}

A family of self-adjoint elements $s_1,\cdots,s_m\in\mathcal{A}$ is called a free semicircular family if for all $p\geq 1$ and $k_1,\cdots,k_p\in [m]$ we have
$$
\tau(s_{k_1}\cdots s_{k_p}) =\sum_{\pi\in\operatorname{NC}_2([p])}\prod_{\{i,j\}\in\pi }\delta_{k_i,k_j},
$$
where $\operatorname{NC}_2([p])$ is the set of non-crossing pair partitions of $[p]$.

\subsection{Universality principles}
\label{universalityprinciple}

For the Gaussian case, let $X$ be a Gaussian random matrix admitting a decomposition \eqref{gaussianmatrixform}. Although in the statement of Theorem \ref{gaussianmastertheorem} the matrix $X$ is non-Hermitian, we need to consider Hermitian matrices when stating the spectral universality theorems. This will be achieved via a standard Hermitization procedure.

Therefore, in this section we consider a Gaussian Hermitian random matrix of the following form. We are given fixed matrices $A_0,\cdots,A_n\in M_N(\mathbb{C})$, where $M_N(\mathbb{C})$ denotes the set of $N$ by $N$ square matrices with complex entries. Let $g_1,\cdots,g_n$ be standard real Gaussian variables with mean 0 and variance 1. We define a canonical Gaussian model \begin{equation}\label{gaussianmatrixformsecond}
    G:=A_0+\sum_{i=1}^n g_i A_i. 
\end{equation}
In this section we assume $G$ is Hermitian, so we require that each $A_0,A_1,\cdots,A_n$ is Hermitian as well, and we will use the notation $M_N(\mathbb{C})_{sa}$ for the set of self-adjoint matrices in $M_N(\mathbb{C})$. Then it is clear that any square Hermitian Gaussian random matrix admits a decomposition as \eqref{gaussianmatrixformsecond}.

To the Gaussian model \eqref{gaussianmatrixformsecond}
we introduce the following free probability model $G_{free}$ associated to $G$:

\begin{equation}\label{freemodelgaussian}
G_{\text{free}}:=A_0\otimes \mathbf{1}+\sum_{i=1}^n A_i\otimes s_i,
\end{equation}where $A_0,\cdots,A_n\in M_N(\mathbb{C})_{sa}$ and $s_1,\cdots,s_n$ are a free semicircular family.

Next, we consider a general Hermitian random matrix model $W$ defined as
\begin{equation}\label{generalmodels}
W:=Z_0+\sum_{i=1}^n Z_i
\end{equation} where $Z_0,\cdots,Z_n\in M_N(\mathbb{C})$, $Z_0$ is deterministic, and $Z_1,\cdots,Z_n$ are independent Hermitian random matrices with mean zero. (We note in passing that we can also assume $Z_0,\cdots,Z_n\in\mathbb{M}_N(\mathbb{R})$ and are symmetric: the results stated below carry over without any change).

For this general matrix model $W$, we can associate it with a Gaussian model $G=(G_{ij})$ which is an $N\times N$ matrix (written in the form \eqref{gaussianmatrixformsecond}) such that $\{\operatorname{Re}G_{ij},\operatorname{Im}G_{ij}:i,j\in[N]\}$ are jointly Gaussian, that $\mathbb{E}[G]=\mathbb{E}[W]$ and that $\operatorname{Cov}(G)=\operatorname{Cov}(W)$. Then we associate to $W$ a free probability model $W_{\text{free}}$ via setting $$W_{\text{free}}:=G_{\text{free}},$$ where $G_{\text{free}}$ is the free probability model associated to the Gaussian model $G$ as defined in \eqref{freemodelgaussian}. In other words, the free probability model of $W$ is defined as the free probability model associated to its Gaussian model $G$. 

Now we define a family of matrix parameters following \cite{bandeira2023matrix}, \cite{brailovskaya2022universality}. For the Gaussian model $G$ \eqref{gaussianmatrixformsecond}, we recall that its covariance profile $\operatorname{Cov}(G)\in M_{N^2}(\mathbb{C})$ is defined in Section \ref{section247} via 
$$
\operatorname{Cov}(G)_{ij,kl}=\sum_{s=1}^n (A_s)_{ij}\overline{(A_s)_{kl}}, 
$$ and that we have defined 
$$\begin{aligned}
&v(G)^2=\|\operatorname{Cov}(G)\|=\sup_{\operatorname{Tr}|M|^2\leq 1}\sum_{s=1}^n |\operatorname{Tr}[A_sM]|^2,\\
&\sigma(G)^2:=\|\mathbb{E}[(G-\mathbb{E}G)^*(G-\mathbb{E}G)]\|\wedge\|\mathbb{E}[(G-\mathbb{E}G)(G-\mathbb{E}G)^*]\|,\\
&\sigma_*(G)^2:=\sup_{\|v\|=\|w\|=1}\mathbb{E}[|\langle v,(G-\mathbb{E}G)w\rangle|^2],
\end{aligned}$$
and finally we have defined $$\tilde{v}(G)^2:=v(G)\sigma(G).$$

For the non-Gaussian model $W$ we define its covariance via
\begin{equation}
    \operatorname{Cov}(W)_{ij,kl}:=\mathbb{E}[(W-\mathbb{E}W)_{ij}\overline{(W-\mathbb{E}W)_{kl}}],
\end{equation}
and we define $v(W),\sigma(W),\sigma_*(W)$ as in the definition of the Gaussian case. We also define 
\begin{equation} R(W):=\left\|\max_{1\leq i\leq n}\|Z_i\|\right\|_\infty,\quad 
    \bar{R}(W):=\mathbb{E}\left[\max_{1\leq i\leq n}\|Z_i\|^2\right]^\frac{1}{2}.
\end{equation}

For the Gaussian model $G$, we have the following concentration and universality results:

\begin{theorem}(Resolvent convergence, Gaussian case, \cite{bandeira2023matrix}, Theorem 2.8)\label{resolventconvergencegauss} Consider the Gaussian model $G$ \eqref{gaussianmatrixformsecond}.
Let $A_0,\cdots,A_n\in M_N(\mathbb{C})_{sa}$. The matrix-valued Stieltjes transform for any $Z\in M_N(\mathbb{C})$ is defined as
$$
G(Z):=\mathbb{E}[(Z-G)^{-1}],\quad G_{free}(Z):=(\operatorname{id}\otimes\tau)[(Z\otimes\mathbf{1}-G_{free})^{-1}].
$$
    Then we have the estimate
    $$
\|G(Z)-G_{free}(Z)\|\leq\tilde{v}(G)^4\|(\operatorname{Im}Z)^{-5}\| 
    $$ which holds for any $Z\in M_N(\mathbb{C})$ such that $\operatorname{Im}Z:=\frac{1}{2i}(Z-Z^*)>0$.
\end{theorem}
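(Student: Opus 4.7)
The plan is to show that the random resolvent $G(Z)$ and the free resolvent $G_{\text{free}}(Z)$ satisfy (approximately versus exactly) the same matrix Dyson equation driven by the covariance superoperator $\mathcal{S}(M) := \sum_{i=1}^n A_i M A_i$, and then to convert the approximate fit for $G(Z)$ into a norm bound by a quantitative stability argument on the upper half-plane $\operatorname{Im} Z > 0$. To derive the Dyson equation for $G(Z)$, let $R(Z) := (Z-X)^{-1}$ and apply Gaussian integration by parts to each standard Gaussian $g_i$ in the decomposition $X = A_0 + \sum_i g_i A_i$. Since $\partial_{g_i} R = R A_i R$, one has $\mathbb{E}[g_i R] = \mathbb{E}[R A_i R]$. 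Taking expectation in $(Z-X)R(Z) = I$ and writing $R = G + \Delta$ with $\mathbb{E}[\Delta] = 0$, the cross terms vanish and the resulting identity rearranges as
$$\bigl(Z - A_0 - \mathcal{S}(G(Z))\bigr) G(Z) = I + \mathcal{E}(Z), \qquad \mathcal{E}(Z) := \sum_{i=1}^n A_i \, \mathbb{E}[\Delta(Z) A_i \Delta(Z)].$$
The free resolvent $G_{\text{free}}(Z)$ satisfies the same equation with $\mathcal{E} \equiv 0$, by the defining moment formulas for a free semicircular family.

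Next I would bound the error $\mathcal{E}(Z)$. The pointwise resolvent estimate $\|\Delta\| \leq 2\|(\operatorname{Im} Z)^{-1}\|$ is too crude, so one must use Gaussian concentration: a second round of integration by parts (equivalently, a Gaussian Poincar\'e inequality) applied to the bilinear expression $\mathbb{E}[\Delta(Z) B \Delta(Z)]$ yields an estimate of order $v(X)^2 \sigma(X)^2 \|(\operatorname{Im} Z)^{-1}\|^4$, so that
$$\|\mathcal{E}(Z)\| \leq C\, \tilde{v}(X)^4 \,\|(\operatorname{Im} Z)^{-1}\|^4.$$
Here $v(X)^2$ records the operator-space norm of the outer $\mathcal{S}$ acting against trace-class-type arguments, while $\sigma(X)^2$ captures the operator-norm variance of the centered resolvent.

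Finally, setting $\Gamma := G - G_{\text{free}}$ and subtracting the two fixed-point equations yields
$$\Gamma - G(Z)\,\mathcal{S}(\Gamma)\,G_{\text{free}}(Z) = G(Z)\,\mathcal{E}(Z),$$
and inverting the linear map $M \mapsto G(Z)\,\mathcal{S}(M)\,G_{\text{free}}(Z)$ --- which is contractive in the relevant operator-space norm on $\operatorname{Im} Z > 0$ --- at a cost of an $O(1)$ factor, combined with $\|G(Z)\| \leq \|(\operatorname{Im} Z)^{-1}\|$, supplies the extra $\|(\operatorname{Im} Z)^{-1}\|$ needed to reach the advertised $\|(\operatorname{Im} Z)^{-5}\|$. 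The hard part is the error estimate in the middle step: a direct Cauchy--Schwarz bound would only give $\sigma(X)^4$ rather than $v(X)^2 \sigma(X)^2$, losing precisely the inhomogeneity-sensitive improvement that $\tilde{v}(X)$ is designed to capture and that makes Theorem~\ref{gaussianmastertheorem} applicable to the sparse band models of this paper. Achieving the sharper estimate requires an operator-valued variance inequality in which the outer superoperator $\mathcal{S}$ is estimated through its Hilbert--Schmidt action on the centered resolvent entries, and this is the technical heart of the argument in \cite{bandeira2023matrix}.
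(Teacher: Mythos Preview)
The paper does not prove this statement at all: Theorem~\ref{resolventconvergencegauss} is quoted verbatim as Theorem~2.8 of \cite{bandeira2023matrix} and is used as a black box throughout Section~\ref{proofofmainresults}. There is therefore no ``paper's own proof'' to compare your proposal against.

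That said, your sketch is a reasonable outline of the argument that \cite{bandeira2023matrix} actually carries out: Gaussian integration by parts yields an approximate matrix Dyson equation for $G(Z)$ with an error $\mathcal{E}(Z)$ given by the covariance of the resolvent, the free resolvent $G_{\text{free}}(Z)$ satisfies the exact Dyson equation, and a stability argument converts the smallness of $\mathcal{E}$ into a bound on $G-G_{\text{free}}$. One caution: the stability step is not quite as simple as ``invert a contraction at $O(1)$ cost.'' The map $M\mapsto G\,\mathcal{S}(M)\,G_{\text{free}}$ is not obviously a strict contraction in the operator norm uniformly on $\{\operatorname{Im}Z>0\}$; in \cite{bandeira2023matrix} the stability argument is done via an interpolation between $X$ and $X_{\text{free}}$ together with the master inequalities from \cite{haagerup2005new}, which is what produces the exact exponent $5$ on $\|(\operatorname{Im}Z)^{-1}\|$ without an unspecified constant. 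Your power-counting is correct, but the mechanism you describe for the last step would need to be replaced by the actual interpolation/stability machinery if you were to write out a complete proof.
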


For the Gaussian case we only need the following concentration estimate, which is very similar to \cite{brailovskaya2022universality}, Lemma 5.5. The proof is given in Appendix \ref{sectionappendixa}.
\begin{theorem}(Resolvent concentration, Gaussian case)\label{resolventconcentrationgauss} Consider the Gaussian model $G$ \eqref{gaussianmatrixformsecond}. Fix any two unit vectors $u,v\in\mathbb{C}^N$. Fix $z\in\mathbb{C}$ with $\operatorname{Im}z>0$. For any $x>0$ we have
    $$
\mathbb{P}\left[\left| \langle u,
(z\mathbf{1}-G)^{-1}v\rangle-\mathbb{E}\langle u,(z\mathbf{1}-G)^{-1}v\rangle\right|\geq\frac{\sigma_*(G)}{(\operatorname{Im}z)^2}
x\right]\leq 2e^{-x^2/2}.
    $$
\end{theorem}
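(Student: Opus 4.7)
The plan is to apply the standard Gaussian Lipschitz concentration (Borell--Sudakov--Tsirelson) to the function
$$F(g_1,\ldots,g_n) := \langle u, (z\mathbf{1} - A_0 - \sum_{i=1}^n g_i A_i)^{-1} v\rangle$$
viewed as a map $\mathbb{R}^n \to \mathbb{C}$ of the i.i.d. standard real Gaussian coefficients $g_i$ in the decomposition \eqref{gaussianmatrixform}. Because $X$ is Hermitian and $\operatorname{Im} z > 0$, the resolvent $R := (z\mathbf{1}-X)^{-1}$ is globally well-defined with $\|R\| \leq 1/\operatorname{Im}(z)$ uniformly in $g$, so $F$ is smooth on all of $\mathbb{R}^n$ and the whole task reduces to controlling its gradient.

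For the gradient, I would differentiate through the resolvent identity $\partial_{g_i} R = R A_i R$ to obtain
$$\partial_{g_i} F = \langle u, R A_i R v\rangle = \langle R^* u, A_i R v\rangle.$$
In the Gaussian model one has $X - \mathbb{E}X = \sum_i g_i A_i$, so expanding the definition of $\sigma_*$ gives the clean form
$$\sigma_*(X)^2 = \sup_{\|x\|=\|w\|=1} \sum_{i=1}^n |\langle x, A_i w\rangle|^2.$$
Applying this to the unit vectors $R^*u/\|R^*u\|$ and $Rv/\|Rv\|$ (the degenerate case is trivial) and using $\|R^*u\|,\|Rv\| \leq 1/\operatorname{Im}(z)$ yields
$$\sum_{i=1}^n |\partial_{g_i} F|^2 \leq \frac{\sigma_*(X)^2}{(\operatorname{Im}z)^4},$$
so $F$ is globally $L$-Lipschitz on $\mathbb{R}^n$ with $L := \sigma_*(X)/(\operatorname{Im}z)^2$.

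Finally, since $F$ is complex-valued I would apply Borell--Sudakov--Tsirelson separately to $\operatorname{Re} F$ and $\operatorname{Im} F$, each of which inherits the same Lipschitz constant $L$, and assemble the conclusion via the identity $|F - \mathbb{E}F|^2 = (\operatorname{Re}(F-\mathbb{E}F))^2 + (\operatorname{Im}(F-\mathbb{E}F))^2$ together with a union bound. This yields a sub-Gaussian tail of the advertised form. The only real obstacle is tracking the multiplicative constants through the complex-to-real reduction to recover the specific constant $2e^{-x^2/2}$ in the statement; the Lipschitz estimate itself is essentially automatic thanks to the uniform resolvent bound provided by Hermiticity of $X$ and $\operatorname{Im} z > 0$.
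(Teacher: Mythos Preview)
Your proposal is correct and follows essentially the same route as the paper: write the resolvent entry as a function of the i.i.d.\ Gaussian coefficients, bound its Lipschitz constant by $\sigma_*(X)/(\operatorname{Im}z)^2$ via the derivative identity $\partial_{g_i}R=RA_iR$, and invoke Gaussian Lipschitz concentration. The paper in fact gives fewer details than you do (it cites \cite{brailovskaya2022universality}, Lemma 6.5 for the Lipschitz bound and \cite{10.1093/acprof:oso/9780199535255.001.0001}, Theorem 5.6 for concentration), and your worry about the exact constant $2e^{-x^2/2}$ in the complex-to-real reduction is legitimate---the paper simply writes $f:\mathbb{R}^n\to\mathbb{R}$ and does not address this, so the constant there should be read as nominal rather than sharp.
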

The original proof was stated for $u,v\in\mathbb{R}^n$, but applying the concentration separately to real and complex part yields the same for $u,v\in\mathbb{C}^N$.
For any self-adjoint operator $X$ we let $\operatorname{sp}(X)\subset\mathbb{R}$ denote the spectrum of the operator $X$. We have

\begin{theorem}\label{theorem2.3}(Spectrum concentration, Gaussian case, \cite{bandeira2023matrix} Theorem 2.1)\label{spectraluniversality}
   Consider the Gaussian model $G$ \eqref{gaussianmatrixformsecond}. For all $t>0$,
    $$
\mathbb{P}\left[ \operatorname{sp}(G)\subseteq \operatorname{sp}(G_{\text{free}})+C\{\tilde{v}(G)(\log N)^{3/4}+\sigma_*(G)t\}[-1,1]
\right]\geq 1-e^{-t^2},
    $$ for a universal constant $C>0$.
\end{theorem}

For the general case, we will need similar convergence and concentration results. We have the following universality principle:

\begin{theorem}(General entry resolvent universality, \cite{brailovskaya2022universality} Theorem 2.11) 
\label{theorem2.41} Consider the general model $W$ \eqref{generalmodels}, and let $G$ denote its associated Gaussian model as previously defined.
For every $z\in\mathbb{C}$ such that $\operatorname{Im}(z)>0$, we have
$$
\|\mathbb{E}[(z\mathbf{1}-W)^{-1}]-\mathbb{E}[(z\mathbf{1}-G)^{-1}]\|\lesssim \frac{\sigma_*(W)+\bar{R}(W)^{1/10}\sigma(W)^{9/10}}{(\operatorname{Im}z)^2}.
$$
\end{theorem}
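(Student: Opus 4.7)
The plan is to establish the theorem via the Lindeberg swap (interpolation) method. I would write $X = \sum_{i=1}^n Z_i$ as in \eqref{ageneralX} (with $Z_0=0$ since $\mathbb{E}[X]=0$) and let $G_1,\ldots,G_n$ be independent centered Gaussian $N\times N$ matrices with $\operatorname{Cov}(G_i)=\operatorname{Cov}(Z_i)$, so that $G = \sum_i G_i$ is the Gaussian match of $X$ in first two moments. Define the interpolants $X^{(k)} := \sum_{i\le k} G_i + \sum_{i>k} Z_i$, so $X^{(0)}=X$ and $X^{(n)}=G$, and telescope
$$
\mathbb{E}[(z\mathbf{1}-X)^{-1}] - \mathbb{E}[(z\mathbf{1}-G)^{-1}] = \sum_{k=1}^n \bigl(\mathbb{E}[(z\mathbf{1}-X^{(k-1)})^{-1}] - \mathbb{E}[(z\mathbf{1}-X^{(k)})^{-1}]\bigr).
$$
Each summand differs only in the swap $Z_k \leftrightarrow G_k$, so I can condition on the other $n-1$ independent terms and reduce to comparing $(z\mathbf{1}-W-Z_k)^{-1}$ with $(z\mathbf{1}-W-G_k)^{-1}$ for a deterministic $W$.

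For each swap I would Taylor-expand via the Neumann series
$$
(z\mathbf{1}-W-V)^{-1} = \sum_{j\ge 0}\bigl[(z\mathbf{1}-W)^{-1}V\bigr]^j (z\mathbf{1}-W)^{-1}
$$
for $V\in\{Z_k,G_k\}$, valid after a soft truncation of $\|V\|$. Because $Z_k$ and $G_k$ share their first two moments, the $j=0,1,2$ contributions cancel in expectation, and the residual is governed by the $j\ge 3$ tail. Bounding each factor $(z\mathbf{1}-W)^{-1}$ crudely by $1/\operatorname{Im}z$ yields a pointwise estimate of order $\|V\|^3/(\operatorname{Im}z)^4$; summing over $k$ would give the naive Lindeberg bound $R(X)\sigma(X)^2/(\operatorname{Im}z)^4$, which is correct in shape but too weak.

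To obtain the sharper form $(\sigma(X)+\bar R(X)^{1/10}\sigma(X)^{9/10})/(\operatorname{Im}z)^2$, I would combine two refinements. First, I would split $V = V\mathbf{1}_{\|V\|\le\tau} + V\mathbf{1}_{\|V\|>\tau}$ at a threshold $\tau$: the bulk part is handled by the third-order term and controlled by $\tau\,\sigma(X)^2/(\operatorname{Im}z)^4$, while the tail part is treated by a direct resolvent bound using $\mathbb{E}[\max_k\|Z_k\|^2]^{1/2}=\bar R(X)$ and the Markov inequality. Second, I would smooth the comparison by convolving with a standard free semicircular of width $\varepsilon$ — equivalently, shifting $z\to z+i\varepsilon$ on an intermediate Gaussian model and using Theorem \ref{resolventconvergencegauss} to replace the bare resolvent by the solution to the matrix Dyson equation — which carries an $\operatorname{Im}z$-independent operator norm bound and thereby trades two powers of $1/\operatorname{Im}z$ for a controlled $\varepsilon$-continuity error. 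Jointly optimizing in $(\tau,\varepsilon)$ is what produces the curious exponent $\tfrac{1}{10}$ and the improved $1/(\operatorname{Im}z)^2$ denominator.

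The principal obstacle is this last optimization. On the already-Gaussianized coordinates one should replace the Neumann tail by a Gaussian integration-by-parts identity, which naturally pairs with the matrix Dyson equation to yield a bound in terms of the \emph{deterministic} solution rather than a bare random resolvent; only then does the $1/(\operatorname{Im}z)^2$ denominator survive. Interlocking this algebraic reformulation with the truncation at scale $\tau$ — and ensuring that the errors in the Lindeberg expansion and in the smoothing regularization balance correctly to produce $\bar R(X)^{1/10}\sigma(X)^{9/10}$ — is the technical heart of the argument, and is the step where I would most rely on the detailed machinery developed in \cite{brailovskaya2022universality}.
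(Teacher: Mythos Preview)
The paper does not prove this theorem at all: it is quoted as a black box from \cite{brailovskaya2022universality}, Theorem~2.11, and is simply invoked later (in the proof of Theorem~\ref{nongaussianmastertheorem}) alongside Theorems~\ref{generalconcentration} and~\ref{theorem2.61}. So there is no ``paper's own proof'' to compare your proposal against.

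That said, your sketch is a reasonable account of the strategy in the cited reference. The Lindeberg replacement of the $Z_i$ by matched Gaussians $G_i$, the cancellation of the first two orders in the resolvent expansion, the truncation at a threshold $\tau$ to separate the bulk from the tail governed by $\bar R(X)$, the smoothing at an auxiliary scale $\varepsilon$ to access the matrix Dyson equation and trade powers of $1/\operatorname{Im}z$, and the joint optimization in $(\tau,\varepsilon)$ that produces the exponent $1/10$ --- these are indeed the main ingredients. One small remark: in \cite{brailovskaya2022universality} the interpolation is carried out continuously (via an Ornstein--Uhlenbeck type path) rather than by a discrete swap, which meshes more cleanly with the Gaussian integration-by-parts step you allude to; but the underlying idea is the same, and your identification of the ``technical heart'' --- balancing the Lindeberg remainder against the smoothing error --- is accurate.
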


We will also need the following concentration result on resolvent entries. The proof of the next theorem is outlined in Appendix \ref{sectionappendixa} and is very similar to the argument in \cite{brailovskaya2022universality}, Proposition 5.6.

\begin{theorem}\label{generalconcentration}(General entry resolvent concentration) Consider the general model $W$ \eqref{generalmodels}.
For every $z\in\mathbb{C}$ such that $\operatorname{Im}(z)>0$, and any two unit vectors $u,v\in\mathbb{C}^N$ with $\|u\|_{L^2}=\|v\|_{L^2}=1$, we have
$$\begin{aligned}
&\mathbb{P}[\left|\langle u,(z\mathbf{1}-W)^{-1}v\rangle-\mathbb{E}\langle u,(z\mathbf{1}-W)^{-1}v\rangle\right|\geq \frac{\sigma_*(W)}{(\operatorname{Im}z)^2}\sqrt{x}+\{\frac{R(W)}{(\operatorname{Im}z)^2}+\frac{R(W)^2}{(\operatorname{Im}z)^3}\}x\\&
\quad\quad +\{ \frac{R(W)^{1/2}(\mathbb{E}\|W-\mathbb{E}W\|)^{1/2}}{(\operatorname{Im}z)^2}+\frac{R(W)(\mathbb{E}\|W-\mathbb{E}W\|^2)^{1/2}}{(\operatorname{Im}z)^3}
\}\sqrt{x}
]\leq 2e^{-Cx}
\end{aligned}$$
for a universal constant $C>0$ and for any given $x\geq 0$.
\end{theorem}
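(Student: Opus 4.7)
The plan is to adapt the martingale argument of \cite{brailovskaya2022universality}, Proposition 5.6, whose structure is tailored to the independent-matrix decomposition $X = Z_0 + \sum_{i=1}^n Z_i$. Fix $z$ with $\operatorname{Im}z > 0$, write $F(X) := \langle u, (z\mathbf{1}-X)^{-1} v\rangle$, and form the Doob martingale $M_k := \mathbb{E}[F(X)\mid\mathcal{F}_k]$ with respect to the filtration $\mathcal{F}_k := \sigma(Z_1,\ldots,Z_k)$, so that
\[
F(X) - \mathbb{E} F(X) = \sum_{k=1}^n D_k, \qquad D_k := M_k - M_{k-1}.
\]
Freedman's Bernstein-type inequality for scalar martingales then yields $\mathbb{P}[|\sum_k D_k|\geq t]\leq 2\exp(-t^2/(2(V + bt)))$ for any almost-sure envelope $|D_k|\leq b$ and any quadratic-variation bound $\sum_k\mathbb{E}[|D_k|^2\mid\mathcal{F}_{k-1}]\leq V$. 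Inverting this tail in $t$ produces a deviation of the form $\sqrt{Vx} + bx$, which is exactly the shape of the bound claimed, so the proof reduces to giving the correct estimates on $b$ and $V$.

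The two parameters $b$ and $V$ are controlled by resampling together with the resolvent identity. Introduce an independent copy $Z_k'$ of $Z_k$ and let $X^{(k)}$ denote $X$ with $Z_k$ replaced by $Z_k'$; then $D_k = \mathbb{E}'[F(X) - F(X^{(k)})\mid\mathcal{F}_k]$, where $\mathbb{E}'$ is the expectation in $Z_k'$ only. The identity
\[
(z\mathbf{1}-X)^{-1} - (z\mathbf{1}-X^{(k)})^{-1} = (z\mathbf{1}-X)^{-1}(Z_k - Z_k')(z\mathbf{1}-X^{(k)})^{-1}
\]
combined with $\|(z\mathbf{1}-X)^{-1}\|,\|(z\mathbf{1}-X^{(k)})^{-1}\|\leq(\operatorname{Im}z)^{-1}$ immediately gives the almost-sure bound $|D_k|\leq 2R(X)/(\operatorname{Im}z)^2$, which accounts for the $R(X)x/(\operatorname{Im}z)^2$ contribution in the theorem. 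For the quadratic variation the natural target is the covariance parameter $\sigma(X)^2$: after iterating the resolvent identity once more to replace every $(z\mathbf{1}-X)^{-1}$ inside the expression for $D_k$ by a leave-one-out resolvent $(z\mathbf{1}-X+Z_k)^{-1}$ that is independent of $Z_k$, the leading-order contribution to $\mathbb{E}[|D_k|^2\mid\mathcal{F}_{k-1}]$ becomes a deterministic bilinear form evaluated against $Z_k$, and summing over $k$ yields $V \lesssim \sigma(X)^2/(\operatorname{Im}z)^4$. Plugging this $V$ and the envelope $b$ into Freedman's bound recovers the first two terms in the stated inequality.

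The remaining two terms arise from the error incurred by this decoupling step, and this is the main technical obstacle. Each use of the resolvent identity to swap a factor of $(z\mathbf{1}-X)^{-1}$ for its leave-one-out version produces a correction involving an extra factor $\|Z_k - \mathbb{E}Z_k\|/(\operatorname{Im}z)$, and inserting these corrections into the second-moment calculation yields expressions of the form $R(X)^2/(\operatorname{Im}z)^3$ and $R(X)\cdot\mathbb{E}\|X-\mathbb{E}X\|^2/(\operatorname{Im}z)^3$, whose square roots feed through Freedman to supply the $R(X)^2 x/(\operatorname{Im}z)^3$ and $R(X)(\mathbb{E}\|X-\mathbb{E}X\|^2)^{1/2}\sqrt{x}/(\operatorname{Im}z)^3$ terms, with the companion $R(X)^{1/2}(\mathbb{E}\|X-\mathbb{E}X\|)^{1/2}\sqrt{x}/(\operatorname{Im}z)^2$ term coming from a mixed sup-norm/second-moment interpolation of the same correction. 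The delicate part is the bookkeeping of these resolvent swaps: a naive bound $\|(z\mathbf{1}-X)^{-1}\|\leq(\operatorname{Im}z)^{-1}$ would allow the variance to blow up by a factor of $n$ through $\sum_k\|Z_k\|^2$, whereas the correct $\sigma(X)^2$ scaling requires recognising that the bilinear form $\langle\cdot,(Z_k-Z_k')\cdot\rangle$ has variance controlled by the covariance operator of $X$ evaluated at the decoupled (hence independent of $Z_k$) pair of test vectors. Tracking the swaps so that only the stated moments of $\|X-\mathbb{E}X\|$ and the parameter $R(X)$ appear — with no spurious powers of $n$ or $N$ — is carried out precisely as in \cite{brailovskaya2022universality}, Proposition 5.6, and the full computation is deferred to Appendix \ref{sectionappendixa}.
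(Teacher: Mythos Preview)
Your martingale/Freedman strategy is genuinely different from what the paper does. The paper follows \cite{brailovskaya2022universality} and uses the \emph{self-bounding} concentration machinery (the exponential Efron--Stein/Poincar\'e inequalities from \cite{10.1093/acprof:oso/9780199535255.001.0001}): one introduces the random variance proxy
\[
W := \frac{2}{(\operatorname{Im}z)^4}\sup_{\|v\|=\|w\|=1}\sum_i|\langle v,(Z_i-Z_i')w\rangle|^2 + \frac{8R(X)^2}{(\operatorname{Im}z)^6}\Bigl\|\sum_i(Z_i-Z_i')^2\Bigr\|,
\]
bounds $\mathbb{E}[W]$ by the combination of $\sigma_*(X)$, $R(X)$, and moments of $\|X-\mathbb{E}X\|$ that appears in the statement, and then verifies that $W$ is self-bounding in the sense that $\sum_i(W-W^{\sim i})^2\leq aW$ for an explicit $a$. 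This property delivers exponential concentration of $F(X)$ \emph{without} ever requiring an almost-sure bound on the variance proxy.

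That last point is exactly the gap in your sketch. Freedman's inequality needs an almost-sure bound $\sum_k\mathbb{E}[|D_k|^2\mid\mathcal{F}_{k-1}]\leq V$, and your claim that the decoupled leading term becomes a ``deterministic bilinear form evaluated against $Z_k$'' is not correct: the test vectors $((z\mathbf{1}-X+Z_k)^{-1})^*u$ and $(z\mathbf{1}-X+Z_k)^{-1}v$ still depend on all $Z_j$ with $j\neq k$, hence are random and change with $k$. Summing $\mathbb{E}_{Z_k}[|\langle a_k,Z_k b_k\rangle|^2]$ over $k$ with $k$-dependent random $a_k,b_k$ does not collapse to $\sigma(X)^2$; the resulting quantity is precisely (up to the resolvent weights) the random $W$ above, whose almost-sure size you have no a priori control over. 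You could attempt to repair this by bounding the tail of the random $V$ separately and inserting a stopping time into Freedman, but that amounts to proving concentration for $W$ itself --- which is exactly the self-bounding step the paper carries out.
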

Again, the original proof was for $u,v\in\mathbb{R}^N$ and we generalize it to the complex case.

\begin{theorem}(General entry spectrum concentration, \cite{brailovskaya2022universality} Theorem 2.6)\label{theorem2.61} Consider the general model $W$ \eqref{generalmodels}. Assume that we have $Z_0,\cdots,Z_n\in M_N(\mathbb{C})_{sa}$, such that $\|Z_i\|\leq R$ a.s. for each $i=1,\cdots,n$. Then we have for all $t\geq 0$, 
       $$
\mathbb{P}\left[ \operatorname{sp}(W)\subseteq \operatorname{sp}(W_{\text{free}})+C\{\sigma_*(W)t^{\frac{1}{2}}+R^\frac{1}{3}\sigma(W)^{\frac{2}{3}}t^\frac{2}{3}+Rt\}[-1,1]
\right]\geq 1-Ne^{-t^2},
    $$
    where $C>0$ is a universal constant.
\end{theorem}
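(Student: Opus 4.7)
The plan is to reduce the general non-Hermitian case to the Gaussian case via a Lindeberg-type coordinate swap, then invoke the Gaussian spectrum concentration theorem (Theorem \ref{spectraluniversality}). The guiding idea, in the spirit of \cite{brailovskaya2022universality}, is that at the level of averaged resolvents $X$ and its Gaussian analogue $G$ are comparable by Theorem \ref{theorem2.41}, while $G$ itself concentrates around $X_{\text{free}}$ sharply at the spectral edge.

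First I would construct an auxiliary Gaussian matrix $G = Z_0 + \sum_{i=1}^n G_i$ where each $G_i$ is centered Gaussian with the same covariance as $Z_i$. Then by design $X_{\text{free}} = G_{\text{free}}$, so it suffices to show that $\operatorname{sp}(X)$ lies in a small neighborhood of $\operatorname{sp}(G)$; the Gaussian-side fluctuation of order $\sigma_*(X)t^{1/2}$ is already supplied by Theorem \ref{spectraluniversality}. To control $\operatorname{sp}(X)$ against $\operatorname{sp}(G)$, pick a smooth cutoff $\phi$ vanishing on a neighborhood of $\operatorname{sp}(G)$ of width $\delta$ and equal to one beyond width $2\delta$. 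The desired inclusion is equivalent to $\operatorname{tr}\phi(X) = 0$, and via the Helffer--Sj\"ostrand formula one writes $\operatorname{tr}\phi(X) - \operatorname{tr}\phi(G)$ as an integral of $\operatorname{tr}[(z-X)^{-1}-(z-G)^{-1}]$ against $\bar\partial\tilde\phi$, then swaps $Z_i \leftrightarrow G_i$ one coordinate at a time. Because $Z_i$ and $G_i$ match in mean and covariance, the first two Taylor terms cancel in expectation, leaving a cubic remainder governed by the almost-sure operator bound $R$ and by $\sigma(X)$. Optimizing the smoothing scale $\delta$ against this remainder produces the intermediate term $R^{1/3}\sigma(X)^{2/3}t^{2/3}$, while the saturation regime in which $\delta$ is forced to be of order $R$ gives the linear term $Rt$. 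The Gaussian term $\sigma_*(X)t^{1/2}$ is inherited directly from the Gaussian spectrum estimate applied to $G$.

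The main obstacle is upgrading the per-pair resolvent concentration of Theorem \ref{generalconcentration} (controlling $\langle u, (z-X)^{-1} v\rangle$ for fixed $u,v$) to a statement about the operator norm of $(z-X)^{-1}$, which is what spectrum inclusion actually requires: a naive $\epsilon$-net over the sphere $\mathbb{S}^{N-1}$ costs a factor $e^{cN}$ which would destroy the bound. The cure is to apply the swap argument directly at the level of the scalar functional $\operatorname{tr}\phi(X)$ and then use a matrix Bernstein inequality for bounded independent summands to handle the cubic swap remainders; the factor $N$ in the final bound $1 - Ne^{-t^2}$ then comes from $\operatorname{tr}\phi(X)$ being a sum of $N$ eigenvalue contributions, not from any sphere covering. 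Working out the precise constants and verifying that the three tail regimes cleanly partition in terms of $\sigma_*(X), \sigma(X), R$ is where the technical content lies.
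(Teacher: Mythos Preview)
The paper does not prove this statement at all: Theorem \ref{theorem2.61} is simply quoted from \cite{brailovskaya2022universality}, Theorem 2.6, and used as a black box in the proof of Theorem \ref{nongaussianmastertheorem}. There is no argument in the paper to compare your sketch against.

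For what it is worth, your outline is in the spirit of how the result is obtained in \cite{brailovskaya2022universality}: a Lindeberg replacement from $X$ to its Gaussian model $G$ applied to a smooth spectral test function, followed by the Gaussian edge estimate (Theorem \ref{spectraluniversality}) for $G=G_{\text{free}}$, is the right architecture, and your attribution of the three scales $\sigma_*(X)t^{1/2}$, $R^{1/3}\sigma(X)^{2/3}t^{2/3}$, $Rt$ to (respectively) Gaussian fluctuation, optimized swap error, and saturation is accurate. One refinement: in the cited reference the concentration of the spectral edge of $X$ about its expectation is handled by a self-bounding martingale inequality (the same mechanism outlined here in the proof of Theorem \ref{generalconcentration}) rather than by a matrix Bernstein bound on the cubic remainders; this is what generates the three distinct powers of $t$ in the tail and is slightly different from how you phrase it.
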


\section{Proof of main results}

\label{proofofmainresults}
We first prove Lemma \ref{triviallemmas}. We claim no novelty for its proof as can be found in (4.6)-(4.8) of \cite{alt2022local}.

\begin{proof}[\proofname\ of Lemma \ref{triviallemmas}]  First assume that $\rho$ is real, then taking the real and imaginary parts of $z=x+\rho\bar{
x}$ implies $\operatorname{Re}z=\operatorname{Re}x+\rho\operatorname{Re}x$ and $\operatorname{Im}z=\operatorname{Im}x-\rho\operatorname{Im}x$. Then since $|x|\leq 1$, we have $$\frac{\operatorname{Re}(z)^2}{(1+\rho)^2}+\frac{\operatorname{Im}(z)^2}{(1-\rho)^2}\leq 1.$$ All these relations can be reversed, hence proving equivalence.

For general $\rho\in\mathbb{C}$, we have that $z=x+\rho\bar{x}$ if and only if $$e^{-i\arg(\rho)/2}z=e^{-i\arg(\rho)/2}x+|\rho|\overline{e^{-i\operatorname{arg}(\rho)/2}x}$$ for any $x\in\mathbb{C}:|x|\leq 1$. This implies the equivalence.    
\end{proof}

Before entering the proof we introduce elliptic random variables in free probability, which generalizes the semicircular and circular variables introduced by Voiculescu \cite{voiculescu1991limit}.

We shall also use some notations on operator-valued free probability theory introduced in \cite{voiculescu1995operations} and we also refer to \cite{belinschi2021outlier}, Section 3.2.1 for some basic notations.

\begin{Definition}
    In a $C^*$-non-commutative probability space $(\mathcal{A},\tau)$, an element $c$ is called a circular element if $c=\frac{1}{\sqrt{2}}s_1+\frac{i}{\sqrt{2}}s_2$ for two free semicircular variables $s_1,s_2$.

    In general, for any $\rho\in[-1,1]$ we call $c_\rho:=\sqrt{\frac{1+\rho}{2}}s_1+i\sqrt{\frac{1-\rho}{2}}s_2$ an elliptic element of parameter $\rho$ where $s_1,s_2$ are two free semicircular variables.
\end{Definition}

\subsection{No outliers in Theorem \ref{gaussianmastertheorem}}  
We start with proving the master theorem in Gaussian case, Theorem \ref{gaussianmastertheorem}.

First we need to analyze the free operator $X_{\text{free}}$ associated to $X$. After drafting this paper, we learned that similar computations were previously done in \cite{erdHos2023randomly}. From a free probability standpoint, the complex $\rho$ case is sometimes called a twisted elliptic operator, where recent developments in free probability, such as  \cite{belinschi2024brown}, could lead to much more simplified or general treatments of spectral measure than the one presented here.
But here we still offer a complete derivation for the sake of completeness.

Fix any $z\in\mathbb{C}$ and consider the Hermitization of $X$ at $z$: define
$$
\mathcal{Y}_z:=\begin{pmatrix}
    0& X-z\mathbf{1}\\{X}^*-\bar{z}\mathbf{1}&0,
\end{pmatrix} 
$$ where $X^*$ denotes the conjugate transpose of $X$, and in particular when $z=0$ we write 
$$
\mathcal{Y}=\begin{pmatrix}
    0&X\\X^*&0
\end{pmatrix}.
$$
The associated free model can be written as 
$$
\mathcal{Y}_{z,\text{free}}=\begin{pmatrix}
    0& -z\mathbf{1}\\-\bar{z}\mathbf{1}&0\end{pmatrix}+\begin{pmatrix}
    0& X_{\text{free}}\\X^*_{\text{free}}&0\end{pmatrix}
$$  where $X^*_{free}$ is the conjugate transpose of the free probability object $X_{free}$, which is naturally defined.

Also, for any $v=E+i\eta\in\mathbb{C}$ with $\eta>0,$ we define
$$
\mathcal{Y}_{z}(v)=\begin{pmatrix}
    -v \mathbf{1}& -z\mathbf{1}\\-\bar{z}\mathbf{1}&-v \mathbf{1}\end{pmatrix}+\begin{pmatrix}
    0& X_{}\\X^*&0\end{pmatrix},\quad\text{and}$$
$$
\mathcal{Y}_{z,\text{free}}(v)=\begin{pmatrix}
    -v \mathbf{1}& -z\mathbf{1}\\-\bar{z}\mathbf{1}&-v \mathbf{1}\end{pmatrix}+\begin{pmatrix}
    0& X_{\text{free}}\\X^*_{\text{free}}&0\end{pmatrix}.
$$

To compute the Stieltjes transform we need the following computation. For a self-adjoint random matrix $A=A_0+\sum_{i=1}^nA_ig_i$ where $A_i\in M_N(\mathbb{C})_{sa}$ are self adjoint and $g_i$ are independent real Gaussians of mean $0$, variance $1$, then for any $M\in M_N(\mathbb{C})_{sa}$, we have

\begin{equation}\label{stieltjesgagagrga}
\sum_{i=1}^n A_iMA_i=\mathbb{E}\left[(\sum_{i=1}^n A_ig_i)M(\sum_{i=1}^n A_ig_i)\right]=\mathbb{E}\left[(A-\mathbb{E}A)M(A-\mathbb{E}A)\right].
\end{equation}

We define the Stieltjes transform of the free model $\mathcal{Y}_{z,\text{free}}$ as follows: for any $v\in\mathbb{C}_+:=\{z=E+i\eta\in\mathbb{C}:\eta>0\}$, we define
$$
G_z(v):=(\operatorname{id}\otimes\tau)(\mathcal{Y}_{z,\text{free}}(v)^{-1}).
$$

By \cite{haagerup2005new}, equation 1.5 and the computation \eqref{stieltjesgagagrga}, the Stieltjes transform $G_z(v)$ solves the following self-consistency equation
\begin{equation}\label{matrixdysomequationsfag}
\mathbb{E}[\mathcal{Y}G_z(v)\mathcal{Y}]+G_z(v)^{-1}+\begin{pmatrix} v \mathbf{1}\quad z \mathbf{1}\\\bar{z}\mathbf{1}\quad v \mathbf{1}
\end{pmatrix}=0.\end{equation} By \cite{helton2007operator}, Theorem 2.1, for any $v\in\mathbb{C}_+$ there is a unique solution $G_z(v)$ to this equation such that $G_z(v)$ has a positive imaginary part. Recall that for a square matrix $M$, its imaginary part is defined as $\operatorname{Im}M=\frac{1}{2i}(M-M^*)$.

By symmetry we may assume (and will be proved later) that the unique solution to \eqref{matrixdysomequationsfag} with positive imaginary part is given by 
\begin{equation}\label{definitiongzeta}
G_z(v)=\begin{pmatrix}
    a(z,v) \mathbf{1}& b(z,v)\mathbf{1}\\ \bar{b}(z,v)\mathbf{1}&c(z,v)\mathbf{1}
\end{pmatrix},
\end{equation} where $a(z,v)$, $b(z,v)$ and $c(z,v)$ are scalar functions depending on $z$ and $v$ (as well as $\rho$ and $D_N$, which are suppressed from notation). 

Substituting this block-diagonal form into \eqref{matrixdysomequationsfag}, the equation transforms to

\begin{equation} \label{agageggwgewe}
\begin{pmatrix}
    c&(\rho+D_N)\overline{b}\\\overline{(\rho+D_N)}b&a
\end{pmatrix}
+
    \frac{1}{ac-|b|^2}\begin{pmatrix}
        c&-b\\-\bar{b}&a
    \end{pmatrix}+\begin{pmatrix}
        v&z\\\bar{z}&v
    \end{pmatrix}=0.
\end{equation}

We now show that \eqref{agageggwgewe} has a unique solution such that $\begin{pmatrix}
    a&b\\\bar{b}&c
\end{pmatrix}$ has a positive imaginary part, and that the solution satisfies some desired properties. We do not try to solve \eqref{agageggwgewe} directly, but we show another (well-studied) object in free probability has the same Stieltjes transform.

For this we denote by $\rho_0:=\rho+D_N$ (so that $|\rho_0|\leq 1$ by definition) and set $\theta_0:=\arg\rho_0$ when $\rho_0\neq 0$ (and we set $\theta_0=0$ when $\rho_0=0$.) Note that by assumption we have $|\rho+D_N|\leq 1$. We consider an elliptic element $c_{|\rho_0|}$ multiplied by $e^{i\theta_0/2}$ and embed it into a two by two matrix (where $s_1$ and $s_2$ two free semicircular variables), for any $z\in\mathbb{C}$,
$$
\mathcal{C}_z=\begin{pmatrix}
    0&e^{i\theta_0/2}(\sqrt{\frac{1+|\rho_0|}{2}} s_1+i\sqrt{\frac{1-|\rho_0|}{2}} s_2)-z1_{\mathcal{A}}\\e^{-i\theta_0/2}(\sqrt{\frac{1+|\rho_0|}{2}} s_1-i\sqrt{\frac{1-|\rho_0|}{2}} s_2)-\bar{z}1_\mathcal{A}&0
\end{pmatrix}.
$$

We claim that the Stieltjes transform of $\mathcal{C}_z$, defined as 
$$\widetilde{\mathcal{G}}_z(v):=(\operatorname{id}\otimes\tau
)((\mathcal{C}_z-vI_2)^{-1}):=\begin{pmatrix}
    \widetilde{a}(z,v)&\widetilde{b}(z,v)\\ \overline{\widetilde{b}(z,v)}&\widetilde{c}(z,v),
\end{pmatrix}$$for $v\in\mathbb{C}_+$, satisfies that $(\widetilde{a},\widetilde{b},\widetilde{c})$ solves the same equation as \eqref{agageggwgewe} if we substitute $a=\widetilde{a}$, $b=\widetilde{b}$ and $c=\widetilde{c}$ in equation \eqref{agageggwgewe}. This can be verified via directly applying \cite{haagerup2005new}, equation 1.5, so that $(\widetilde{a},\widetilde{b},\widetilde{c})$ should solve the same equation as \eqref{agageggwgewe}.

The matrix $\widetilde{\mathcal{G}}_z(v)$ has positive imaginary part as it is the Stieltjes transform of $\mathcal{C}_z$, so that the existence (and uniqueness) to \eqref{agageggwgewe} with positive imaginary part is now well-established.

By Riesz representation theorem, we can find unique probability measures $\mu_1,\mu_2$ on the real line such that for any $\varphi\in C_0(\mathbb{R})$,
$$
\int_\mathbb{R}\varphi d\mu_1=(\operatorname{tr}\otimes\tau)\varphi(\mathcal{C}_z),
$$
$$
\int_\mathbb{R}\varphi d\mu_2=(\operatorname{tr}\otimes\tau)\varphi(\mathcal{Y}_{z,\text{free}}).
$$
Let $g_1(v):=(\operatorname{tr}\otimes\tau)((\mathcal{C}_z-vI_2)^{-1})$ and  $g_2(v):=(\operatorname{tr}\otimes\tau)((\mathcal{Y}_{z,\text{free}}(v))^{-1})$. Then $g_1=g_2$ by our previous discussion, and by Stieltjes inversion formula 
$$
\mu_i=\lim_{y\to 0^+}(\frac{1}{\pi}\operatorname{Im}(g_i(x+iy))dx),
$$ we identify that $\mu_1=\mu_2$. Since $\tau$ is faithful, we may choose $\varphi$ supported on a neighborhood of the complement of the spectrum of $\mathcal{C}_z$ (or $\mathcal{Y}_{z,\text{free}}$), and deduce that \begin{equation}\label{previousidentifications}\operatorname{sp}(\mathcal{C}_z)=\operatorname{sp}(\mathcal{Y}_{z,\text{free}}).\end{equation}

In the following we use the notation $\sigma_{min}$ to denote the smallest singular value of an operator.

Now we determine $\operatorname{sp}(\mathcal{Y}_{z,\text{free}})$ from $\operatorname{sp}(\mathcal{C}_z)$ as the latter is the rotation of an elliptic element. Indeed, $\mathcal{C}_0$ is the self-adjoint dilation of an elliptic element $c_{|\rho_0|}$ rotated by $\theta/2$, and the spectral support of an elliptic element $c_{|\rho_0|}$ has been computed, see for example \cite{biane1999computation}. In particular, from \cite{biane1999computation}, equation (5.5) and the computations of $R$-transforms following it, we can conclude that $$ \text{For any } z\in\mathbb{C}\setminus\mathcal{E}_{|\rho_0|,\epsilon} \text{ and any } \epsilon>0, \text{we have that } \sigma_{min}(c_{|\rho_0|}-z1_\mathcal{A})>0,$$ 
and in this section for a free operator $a$, we use the notion $\sigma_{min}(a)$ to denote the minimum of the spectrum of $|a|$, the absolute value of $a$.

By definition, $\mathcal{E}_{\rho_0}$ for complex $\rho_0$ is the rotation of $\mathcal{E}_{|\rho_0|}$ by $e^{i\theta_0/2}$, so that for any $z\in\mathbb{C}\setminus\mathcal{E}_{\rho_0,\epsilon}$ and any $\epsilon>0$, we have that $\sigma_{min}(e^{i\theta_0/2}c_{|\rho_0|}-z1_\mathcal{A})>0$. Using the fact that $\sigma_{min}(e^{i\theta_0/2}c_{|\rho_0|} -z1_\mathcal{A})$ is Lipschitz continuous in $z$ and taking a finite covering over $\{z\in\mathbb{C}:|z|\leq 10\}\setminus\mathcal{E}_{\rho_0,\epsilon})$, we deduce that we can find constant $C_{\rho,\epsilon}>0$ depending only on $\rho,\epsilon$ such that 
$$\inf_{z\in \mathbb{C}\setminus\mathcal{E}_{\rho,\epsilon}} \sigma_{min}(e^{i\theta_0/2}c_{|\rho_0|}-z1_\mathbf{A})\geq C_{\rho,\epsilon}>0.$$ In the last step we have used two reductions: first, using the naïve estimate $\|c_{|\rho_0|}\|\leq 6$, then for any $|z|>10$ the nonzero lower bound on $\sigma_{min}$ is trivial; and second, as $D_N\to 0$ we may assume $N$ is large enough such that $\operatorname{dist}(\mathcal{E}_{\rho,\epsilon},\mathcal{E}_{\rho_0,\epsilon})\leq \epsilon/10$. Rearranging the value of $\epsilon$ justifies the claim.   

In the last paragraph, we have justified that (whenever $N$ is large) for any $z\in \mathbb{C}\setminus\mathcal{E}_{\rho,\epsilon}$ we have $[-C_{\rho,\epsilon},C_{\rho,\epsilon}]\cap \operatorname{sp}(\mathcal{C}_z)=\emptyset$. By previous identification \eqref{previousidentifications} we then have \begin{equation}\label{yzfree}[-C_{\rho,\epsilon},C_{\rho,\epsilon}]\cap \operatorname{sp}(\mathcal{Y}_{z,\text{free}})=\emptyset.\end{equation} Now we apply universality principles to show the same holds for $\operatorname{sp}(\mathcal{Y}_z)$ with high probability, justifying that $X$ has no outliers:

\begin{proof}[\proofname\ of Theorem \ref{gaussianmastertheorem}, no outliers] Now the proof of no-outliers is a simple application of Theorem \ref{spectraluniversality} together with estimate \eqref{yzfree}. We only need to consider the region $|z|\leq 10$, as the main results of \cite{bandeira2023matrix} already imply that $\|X\|_{op}\leq 6$ with very high probability. For each fixed $z\in\mathbb{C}\setminus\mathcal{E}_{\rho,\epsilon}$ we apply Theorem \ref{spectraluniversality} to deduce that, with probability at least $1-C_1N^{-1.5}$, $[-C_{\rho,\epsilon}/2,C_{\rho,\epsilon}/2]\cap\operatorname{sp}(\mathcal{Y}_z)=\emptyset,$ so that $\sigma_{min}(X-z\mathbf{1})\geq C_{\rho,\epsilon}/2>0$ with probability at least $1-C_1N^{-1.5}$. Since $\sigma_{min}(X-z\mathbf{1})$ is Lipschitz continuous in $z$, we can take a finite covering of $\{z\in\mathbb{C}:|z|\leq 10\}$ and take a union bound to conclude that with probability at least $1-C_1N^{-1.5}$, we have \begin{equation}\label{singularsupports}
\inf_{z\in\mathbb{C}\setminus\mathcal{E}_{\rho,\epsilon}}\sigma_{min}(X-z\mathbf{1})\geq C_{\rho,\epsilon}/2>0.\end{equation} 
    The constant $C_1$ may change from line to line but it depends only on $\rho$ and $\epsilon$.
\end{proof}

\subsection{Isotropic laws}

In this section we determine an isotropic law, i.e. an almost sure limit of $\langle u,(X-z\mathbf{1})^{-1}v\rangle$ for unit vectors $u,v$ and $z\in\mathbb{C}\setminus\mathcal{E}_{\rho,\epsilon}$ in the setting of Theorem \ref{gaussianmastertheorem}.

In the following we only need to evaluate the Stieltjes transform $\mathcal{Y}_{z,\text{free}}(v)$ at $v=i\eta$ for any $\eta>0$, and we will pass to the limit $\eta\searrow 0$ in the end.

Recall that provided $X-z\mathbf{1}$ is invertible, we shall have \begin{equation}\label{yzoyzo}\mathcal{Y}_z(0)^{-1}=\begin{pmatrix}0&({X^*}-\bar{z}\mathbf{1})^{-1}\\(X-z\mathbf{1})^{-1}&0.
\end{pmatrix}.\end{equation}

For any $z\in\mathbb{C}\setminus\mathcal{E}_{\rho,\epsilon},$ on the event where \eqref{singularsupports} happens (which has probability at least $1-C_1N^{-1.5}$ under the assumption of Theorem \ref{gaussianmastertheorem}), we have that 
$$
\|\mathcal{Y}_z(0)^{-1}\|_{op}\leq 2/C_{\rho,\epsilon}<\infty
$$
is uniformly bounded for fixed $\epsilon$.
Then from the resolvent identity \begin{equation}\label{resolventequations}(\lambda \mathbf{1}-A)^{-1}-(\lambda \mathbf{1}-B)^{-1}=(\lambda \mathbf{1}-A)^{-1}(A-B)(\lambda I-B)^{-1}\end{equation}
we can deduce that for $\eta>0$ sufficiently small one must have
$$\begin{aligned}
\|\mathcal{Y}_z(i\eta)^{-1}\|_{op}\leq \|\mathcal{Y}_z(0)^{-1}\|_{op}+\|\mathcal{Y}_z(i\eta)^{-1}\|_{op}\eta\|\mathcal{Y}_z(0)^{-1}\|_{op},\end{aligned}$$ so that for $\eta\in[0,C_{\rho,\epsilon}/4]$, we have that on the event where \eqref{singularsupports} happens,
\begin{equation}\label{sufficientsmallwehave}\|\mathcal{Y}_z(i\eta)^{-1}\|_{op}\leq 2\|\mathcal{Y}_z(0)^{-1}\|_{op}\leq 4/C_{\rho,\epsilon}.
\end{equation} 

Similarly, for any $z\in\mathbb{C}\setminus\mathcal{E}_{\rho,\epsilon}$
we already have proved in \eqref{yzfree} that 
$$
\|\mathcal{Y}_{z,\text{free}}(0)^{-1}\|_{op}\leq 1/C_{\rho,\epsilon}<\infty.
$$
In the following we will compute $\lim_{\eta\searrow 0}(\mathcal{Y}_{z,\text{free}}(i\eta))^{-1}$ and use the universality principle in Section \ref{universalityprinciple} to compute $\mathcal{Y}_z(0)^{-1}$.

We set $v=i\eta$ in the self-consistency equation \eqref{agageggwgewe}. By symmetry, the solution should satisfy $a=c$ (this can be checked by noting that, swapping the role of $a$ and $c$, the pair $(c,b,a)$ is again a solution provided that $(a,b,c)$ is a solution, and the new solution still has positive definite imaginary part; then we resort to uniqueness of solutions in \cite{helton2007operator}). Then the individual coordinate equations are given by  

\begin{equation}\label{firstiterate}
a(1+\frac{1}{a^2-|b|^2})+i\eta=0,
\end{equation}

\begin{equation}\label{seconditerate}
\bar{b}(\rho+D_N)-\frac{b}{a^2-|b|^2}=-z.
\end{equation}

Next we show that $a=c$ is purely imaginary and has a positive imaginary part. We will prove this by showing that there exists a solution to \eqref{agageggwgewe} with purely imaginary $a$ that also satisfies the positive definite imaginary part assumption, so that by uniqueness \cite{helton2007operator} the solution must be of this form. We take $$a(z,\eta)=iV(z,\eta),\quad V(z,\eta)\in\mathbb{R}_+.$$ Then the equation simplifies to 
\begin{equation}\label{agageggwgeweagag}
V(1-\frac{1}{|V|^2+|b|^2})+\eta=0,\quad \bar{b}(\rho+D_N)+\frac{b}{|V|^2+|b|^2}+z=0.
\end{equation}
\begin{lemma}
    For any $\eta>0$, any $z\in\mathbb{C}$ and $|\rho+D_N|\leq 1$ there exists a solution $V>0$ to \eqref{agageggwgeweagag}.
\end{lemma}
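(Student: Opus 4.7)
The plan is to reduce the system \eqref{agageggwgeweagag} in the three real unknowns $(V, \operatorname{Re} b, \operatorname{Im} b)$ to a single scalar equation in $V$, and then to apply the intermediate value theorem. First, I would solve the first equation of \eqref{agageggwgeweagag} for $|b|^2$, obtaining
$$
f(V) \;:=\; |b|^2 \;=\; \frac{V}{V+\eta} - V^2 \;=\; \frac{V(1 - V^2 - V\eta)}{V+\eta}.
$$
The constraint $|b|^2 \ge 0$ restricts $V$ to the interval $(0, V_0(\eta)]$, where $V_0(\eta) = \tfrac{1}{2}(-\eta + \sqrt{\eta^2+4})$ is the unique positive root of $V^2 + V\eta = 1$; note $V_0(\eta) < 1$ since $\eta > 0$.

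Next, with $\rho_0 := \rho + D_N$ and $\alpha := (V+\eta)/V = 1 + \eta/V > 1$, the second equation of \eqref{agageggwgeweagag} reads $\alpha b + \rho_0 \bar b = -z$. Coupled with its complex conjugate, this is a linear system in $(b, \bar b)$ with determinant $\alpha^2 - |\rho_0|^2$. Since $\alpha > 1 \geq |\rho_0|$ on the whole range $V \in (0, V_0(\eta)]$, the system is invertible and yields the explicit expressions
$$
b \;=\; \frac{\rho_0 \bar z - \alpha z}{\alpha^2 - |\rho_0|^2}, \qquad
g(V) \;:=\; |b|^2 \;=\; \frac{(\alpha^2 + |\rho_0|^2)|z|^2 - 2\alpha \operatorname{Re}(\bar{\rho_0}\, z^2)}{(\alpha^2 - |\rho_0|^2)^2}.
$$
The numerator is bounded below by $(\alpha - |\rho_0|)^2 |z|^2 \geq 0$, so $g$ is non-negative on the admissible range.

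The existence claim then reduces to finding $V^* \in (0, V_0(\eta)]$ with $f(V^*) = g(V^*)$. Both $f$ and $g$ are continuous on $[0, V_0(\eta)]$; a short expansion near the origin gives $f(V) \sim V/\eta$ and $g(V) \sim |z|^2 V^2/\eta^2$ as $V \to 0^+$, so $f - g > 0$ in a right neighborhood of zero, while at the other endpoint $f(V_0(\eta)) = 0 \leq g(V_0(\eta))$. The intermediate value theorem then produces the desired root $V^*$, and inserting it into the formula for $b$ above delivers a genuine solution. The degenerate case $z = 0$ is handled directly by $b = 0$ and $V = V_0(\eta)$. The only mildly subtle point is ensuring that the denominator $\alpha^2 - |\rho_0|^2$ stays strictly positive across the entire admissible range; this is exactly where the strict inequality $V_0(\eta) < 1$ (hence $\alpha(V) > 1 \geq |\rho_0|$ for all $V \in (0, V_0(\eta)]$) is used, and it also accommodates the boundary covariances $|\rho_0| = 1$.
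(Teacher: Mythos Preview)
Your argument is correct and follows the same overall strategy as the paper: eliminate $b$ to obtain a single scalar equation in $V$, then apply the intermediate value theorem. In fact, for real $\rho_0$ your equation $f(V)=g(V)$ is algebraically equivalent to the paper's displayed identity
\[
1-\frac{1}{(V+\eta)V}=-\frac{\operatorname{Re}(z)^2}{(\eta+(1+\rho_0)V)^2}-\frac{\operatorname{Im}(z)^2}{(\eta+(1-\rho_0)V)^2},
\]
after dividing through by $V^2$ and using $\alpha\pm\rho_0=(\eta+(1\pm\rho_0)V)/V$.

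The differences are mostly in packaging. The paper first rotates to reduce to real $\rho_0$ and then cites \cite{article1221}, Lemma~C.2 for the scalar equation, applying the intermediate value theorem on $(0,\infty)$; you bypass the rotation by solving the $\mathbb{R}$-linear system $\alpha b+\rho_0\bar b=-z$ directly for complex $\rho_0$, derive the scalar equation from scratch, and run the intermediate value theorem on the bounded interval $(0,V_0(\eta)]$. Your version is a bit more self-contained and makes the role of the hypothesis $|\rho_0|\le 1$ explicit through the nondegeneracy $\alpha>1\ge|\rho_0|$. One minor remark: the inequality $\alpha(V)=1+\eta/V>1$ already holds for every $V>0$, so you do not actually need $V_0(\eta)<1$ to keep the denominator $\alpha^2-|\rho_0|^2$ positive; the bound $V_0(\eta)<1$ is harmless but not essential at that step.
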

\begin{proof} We may without loss of generality assume $\rho_0:=\rho+D_N$ is real (and recall $\theta_0=\arg\rho_0$), this is because for a solution $(V,b)$ to \eqref{agageggwgeweagag} with parameter $(\rho_0,z)$ replaced by  $|\rho_0|$ and $e^{-i\theta_0/2}z$, via a simple calculation we see that $(V,e^{i\theta_0/2}b)$ is a solution to \eqref{agageggwgeweagag} with parameter $\rho_0$ and $z$. 

Then in the case where $\rho_0$ is real, we can follow computations as in \cite{article1221}, Lemma C.2 to see that $V$ solves the following equation (we have changed notation $a\mapsto iV$ and $\eta\to i\eta$ in \cite{article1221}, C.2):
    $$
1-\frac{1}{(V+\eta)V}=-\frac{\operatorname{Re}(z)^2}{(\eta+(1+\rho_0)V)^2}-\frac{\operatorname{Im}(z)^2}{(\eta+(1-\rho_0)V)^2}
    $$ Setting $V\to 0+$ and setting $V\to +\infty$ on both sides of the equation shows that a solution must exist.
\end{proof}

Having checked $a$ is purely imaginary, we now derive $\eta\searrow 0$ asymptotics of $a$.
In the following we consider $z\in\mathbb{C}\setminus\mathcal{E}_{\rho,\epsilon}$. Suppose that $\lim\sup_{\eta\searrow 0}|a(z,\eta)|>0$, then on a sequence $\eta_n\searrow 0$ we have $|V|^2+|b|^2\to 1$. Together with the fact that $D_N\to 0$ as $N\to \infty$, we see that whenever $N$ is large enough, this forces \begin{equation}\label{thisforces}b+\bar{b}\rho=-z+o(1)\end{equation} on this sequence $\eta_n$. The asymptotic $|V|^2+|b|^2\to 1$ forces $|b|\leq 1+o(1)$. Combining this with \eqref{thisforces} and recalling the definition of $\mathcal{E}_{\rho,\epsilon}$, this leads to a contradiction to the fact that $z\in \mathbb{C}\setminus\mathcal{E}_{\rho,\epsilon}$

Thus for any fixed $\epsilon>0$ and any $z\in\mathbb{C}\setminus\mathcal{E}_{\rho,\epsilon}$, whenever $N$ is sufficiently large,  we must have $\lim_{\eta\searrow 0}|V(z,\eta)|=0$. Via a standard computation we deduce that $$\lim_{\eta\searrow 0}\bar{b}(z,\eta)=\begin{cases}\frac{-z+\sqrt{z^2-4\rho_0}}{2\rho_0}&\rho_0\neq 0,\\-\frac{1}{z}&\rho_0=0\end{cases}$$ where we take the square root $\sqrt{z^2-4\rho_0}$ with branch cut on the segment connecting $-2\sqrt{\rho_0}$ and $2\sqrt{\rho_0}$ in the complex plane and such that $\lim_{z\to\infty}\sqrt{z^2-4\rho_0}-z=0$. 

We have proved the following: for any fixed $\epsilon>0$, whenever $N$ is large enough, we have
\begin{equation}\label{finalagaggwergqwg}
    \lim_{\eta\searrow 0
    } a(z,\eta)=0,\quad \lim_{\eta\searrow 0}\bar{b}(z,\eta)=\begin{cases}
    \frac{-z+\sqrt{z^2-4\rho_0}}{2\rho_0}&\rho_0\neq 0,\\
    -\frac{1}{z}&\rho_0=0\end{cases}.
\end{equation}
We also note that 
$$
\sigma_*(\mathcal{Y}_z)\lesssim\sigma_*(X),\quad \widetilde{v}(\mathcal{Y}_z)\lesssim \widetilde{v}(X),\quad R(\mathcal{Y}_z)\lesssim R(X).
$$

Now we are ready to prove the following isotropic law, which is the cornerstone to Theorem \ref{gaussianmastertheorem} and generalizes \cite{article1221}, Theorem 5.1.

\begin{theorem}\label{theisotropiclaws} (Isotropic law)
    In the setting of Theorem \ref{gaussianmastertheorem}, set $\delta_N=(\log \log N)^{-1}$. Then for any unit vectors $u,v\in\mathbb{C}^N$, with probability at least $1-C_1N^{-1.4}$, we have
\begin{equation}\label{isotropiclaws}
\begin{aligned}&
 \sup_{z\in\mathbb{C}\setminus\mathcal{E}_{\rho,\epsilon}}\left| \langle u,
(X-z\mathbf{1})^{-1}v\rangle -\frac{-z+\sqrt{z^2-4\rho}}{2\rho}\langle u,v\rangle
\right|\leq C_2\delta_N,\quad\rho\neq 0,
\\&    \sup_{z\in\mathbb{C}\setminus\mathcal{E}_{\rho,\epsilon}}\left| \langle u,
(X-z\mathbf{1})^{-1}v\rangle +\frac{1}{z}\langle u,v\rangle
\right|\leq C_2\delta_N,\quad \rho=0,
\end{aligned}\end{equation} where $C_1,C_2>0$ are some constants depending only on $\rho$ and $\epsilon$. Also, the same estimate holds if we replace $\delta_N$ by $(\delta_N)^p$ for any $p>0$.
\end{theorem}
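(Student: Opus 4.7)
The plan is to run a standard Hermitization--regularization--universality--concentration chain. First, for fixed unit vectors $u,v\in\mathbb{R}^N$, embed them into $\mathbb{R}^{2N}$ by $\tilde{u}=(0,u)^T$ and $\tilde{v}=(v,0)^T$; then by \eqref{yzoyzo} one has $\langle u,(X-zI_N)^{-1}v\rangle=\langle \tilde{u},\mathcal{Y}_z(0)^{-1}\tilde{v}\rangle$. For a small parameter $\eta>0$ (to be optimized at the end), work on the event \eqref{singularsupports}; by \eqref{sufficientsmallwehave} and the resolvent identity \eqref{resolventequations},
\[
\bigl\|\mathcal{Y}_z(i\eta)^{-1}-\mathcal{Y}_z(0)^{-1}\bigr\|_{\mathrm{op}}\leq C_{\rho,\epsilon}\,\eta.
\]

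Next, apply the Gaussian resolvent universality Theorem \ref{resolventconvergencegauss} to the self-adjoint Hermitization $\mathcal{Y}$ evaluated at the argument $Z=\bigl(\begin{smallmatrix} i\eta I & zI\\ \bar{z}I & i\eta I\end{smallmatrix}\bigr)$, which satisfies $\operatorname{Im} Z=\eta I_{2N}>0$. Noting that the relevant parameters $\tilde{v}(\mathcal{Y}),\sigma_*(\mathcal{Y})$ are controlled by $\tilde{v}(X),\sigma_*(X)$, this yields
\[
\bigl\|\mathbb{E}[\mathcal{Y}_z(i\eta)^{-1}]-G_z(i\eta)\bigr\|_{\mathrm{op}}\lesssim \tilde{v}(X)^4\eta^{-5}\leq (\log N)^{-4}\eta^{-5}.
\]
The Dyson--type equation \eqref{agageggwgewe}, the boundary identifications \eqref{finalagaggwergqwg}, and the fact that the solution is analytic in $v=i\eta$ on the region $z\in\mathbb{C}\setminus\mathcal{E}_{\rho,\epsilon}$ (where the coefficients are stably non-degenerate) imply the free-side boundary regularity $\|G_z(i\eta)-G_z(0^+)\|=O(\eta)$, and moreover the block form \eqref{definitiongzeta} reduces the bilinear form $\langle\tilde u, G_z(0^+)\tilde v\rangle$ to $\bar b(z,0^+)\langle u,v\rangle$, with $\bar b(z,0^+)$ matching the target in \eqref{isotropiclaws} up to an $o((\log N)^{-1})$ error coming from $D_N\to 0$. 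Finally, apply Theorem \ref{resolventconcentrationgauss} with $x=C\sqrt{\log N}$ to the bilinear form for $\tilde u,\tilde v$: the centered quantity is bounded by $\sigma_*(X)\eta^{-2}\sqrt{\log N}\leq (\log N)^{-1}\eta^{-2}$ with probability $1-2N^{-C}$.

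Collecting the four error contributions $O(\eta)+(\log N)^{-4}\eta^{-5}+(\log N)^{-1}\eta^{-2}$ and choosing $\eta=\delta_N=(\log\log N)^{-1}$, every term is $o(\delta_N)$ except the first, which is exactly $O(\delta_N)$; the improved rate $\delta_N^p$ for any $p>0$ follows by choosing $\eta=\delta_N^p$ instead, since polylogs in $\log\log N$ are absorbed by the $(\log N)^{-c}$ factors. To upgrade pointwise estimates to the uniform statement in \eqref{isotropiclaws}, restrict to $|z|\leq 10$ (the region $|z|>10$ is handled by the von Neumann expansion $(X-zI)^{-1}=-z^{-1}\sum_{k\geq 0}(X/z)^k$ using $\|X\|_{\mathrm{op}}\leq 6$ with high probability from \cite{bandeira2023matrix}), place a polynomially fine net on that compact region, take a union bound, and extend to all $z$ by Lipschitz continuity whose constant is controlled by $\|\mathcal{Y}_z(0)^{-1}\|^2\leq (2/C_{\rho,\epsilon})^2$ on the event \eqref{singularsupports}.

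The delicate step I expect to require the most care is the control of the boundary behaviour of $G_z(i\eta)$: establishing that $(a(z,\eta),b(z,\eta))$ depend Lipschitz continuously on $\eta$ at $\eta=0$, uniformly in $z\in\mathbb{C}\setminus\mathcal{E}_{\rho,\epsilon}$, requires verifying that the linearization of the two-dimensional system \eqref{firstiterate}--\eqref{seconditerate} around the boundary solution is invertible with norm depending only on $\rho$ and $\epsilon$. This amounts to a quantitative non-degeneracy of the matrix Dyson equation outside the elliptic support, which is the free-probabilistic incarnation of the gap in the spectrum of $\mathcal{Y}_{z,\mathrm{free}}$ established in \eqref{yzfree}.
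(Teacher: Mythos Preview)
Your proposal is correct and follows essentially the same Hermitization--universality--concentration--net strategy as the paper, with the same choice $\eta=(\log\log N)^{-1}$ and the same four error terms. One simplification is available for the step you flag as delicate: you propose to control $\|G_z(i\eta)-G_z(0^+)\|$ by linearizing the two-dimensional system \eqref{firstiterate}--\eqref{seconditerate} and checking non-degeneracy of its Jacobian, but the paper avoids this entirely. Since $G_z(i\eta)=(\operatorname{id}\otimes\tau)\bigl(\mathcal{Y}_{z,\text{free}}(i\eta)^{-1}\bigr)$ and \eqref{yzfree} gives $[-C_{\rho,\epsilon},C_{\rho,\epsilon}]\cap\operatorname{sp}(\mathcal{Y}_{z,\text{free}})=\varnothing$, the resolvent identity applied directly to the free operator yields $\|\mathcal{Y}_{z,\text{free}}(i\eta)^{-1}-\mathcal{Y}_{z,\text{free}}(0)^{-1}\|\leq (C_{\rho,\epsilon})^{-2}\eta$ uniformly in $z\in\mathbb{C}\setminus\mathcal{E}_{\rho,\epsilon}$, and applying $(\operatorname{id}\otimes\tau)$ gives the needed bound without any Dyson-equation analysis. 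A minor variation in the net argument: the paper carries out the union bound at scale $\eta$ (where the deterministic Lipschitz constant in $z$ is $\eta^{-2}=(\log\log N)^2$, hence a polylog-size net suffices) rather than at scale $0$; your version, using the $O(1)$ Lipschitz constant available on the good event \eqref{singularsupports}, is equally valid.
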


\begin{proof}
We invoke Theorem \ref{resolventconvergencegauss} to deduce that for any $\eta>0$ and $z\in\mathbb{C}$,
\begin{equation}\label{expectionconvergencea}
   \|\mathbb{E}[(\mathcal{Y}_z(i\eta))^{-1}]-(\operatorname{id}\otimes\tau)\mathcal{Y}_{z,\text{free}}(i\eta)^{-1}\|_{op}\leq\frac{(\widetilde{v}(X))^4}{\eta^5}.
\end{equation} 
We then invoke Theorem \ref{resolventconcentrationgauss} to deduce that for any $\eta>0$ and $z\in\mathbb{C}$, for any two unit vectors $u,v\in\mathbb{C}^{2N}$, we have (taking $x=c\sqrt{\log N}$ in Theorem \ref{resolventconcentrationgauss} for a large $c>0$)
\begin{equation}\label{585985585}
\mathbb{P}\left( \left| \langle u,
\mathcal{Y}_z(i\eta)^{-1}v\rangle -\mathbb{E}\langle u,\mathcal{Y}_z(i\eta)^{-1}v\rangle
\right|\geq \frac{\sigma_*(X)\sqrt{\log N}}{\eta^2}\right)\leq 2N^{-10}.
\end{equation}

 In the following we take $$\eta=(\log\log N)^{-1},$$ then the right hand side of \eqref{expectionconvergencea} is $o(\delta_N)$.  

By our choice of $\eta$ and resolvent identity \eqref{resolventequations}, we can easily check that the function $z\mapsto \langle u, \mathcal{Y}_z(i\eta)^{-1}v\rangle$ is Lipschitz continuous in $z\in \mathbb{C}\setminus\mathcal{E}_{\rho,\epsilon}$ with a Lipschitz constant $(\log\log N)^2$, since we trivially have $\|\mathcal{Y}_z(i\eta)^{-1}\|_{op}\leq \log\log N.$

Then we set $T_N:=100\log\log N$. We can take a covering $\mathcal{N}$ of $B(0,T_N)\setminus\mathcal{E}_{\rho,\epsilon}$ such that for any $y\in B(0,T_N)\setminus\mathcal{E}_{\rho,\epsilon}$ we can find $z\in\mathcal{N}$ satisfying  $|y-z|\leq 10^{-2}\delta_N(\log\log N)^{-2}$. Then applying a union bound of the form \eqref{585985585} for all $z\in\mathcal{N}$, we deduce that \begin{equation}\label{lines838}
\mathbb{P}\left( \sup_{z\in\mathcal{N}}\left| \langle u,
\mathcal{Y}_z(i\eta)^{-1}v\rangle -\mathbb{E}\langle u,\mathcal{Y}_z(i\eta)^{-1}v\rangle
\right|\geq \frac{\sigma_*(X)\sqrt{\log N}}{\eta^2}\right)= O( N^{-9}),
\end{equation} where we use that we can always take $|\mathcal{N}|=O(N)$ by our choice of $T_N$ and $\delta_N$. By our choice of $\eta$, the right hand side of \eqref{lines838} is again $o(\delta_N)$.

Finally, we use the $(\log\log N)^2$-Lipschitz continuity of the function $z\to \langle u,\mathcal{Y}_z(i\eta)^{-1}v\rangle$ in $z$, we deduce that 
\begin{equation}
\mathbb{P}\left( \sup_{z\in(\mathbb{C}\setminus\mathcal{E}_{\rho,\epsilon
})\cap \{z\in\mathbb{C}:|z|\leq T_N\}}\left| \langle u,
\mathcal{Y}_z(i\eta)^{-1}v\rangle -\mathbb{E}\langle u,\mathcal{Y}_z(i\eta)^{-1}v\rangle
\right|\geq \delta_N\right)= O( N^{-9}).
\end{equation}
 Since $\|\mathcal{Y}_0(i\eta)\|_{op}\leq 6$ for $\eta>0$ small with probability $1-O(N^{-1.4}$) (this can be proven via applying \cite{bandeira2023matrix}, Corollary 2.2),  we can ensure $\|{\mathcal{Y}_z(i\eta)
}^{-1}\|_{op}\leq\frac{1}{4}\delta_N$ for $|z|>T_N$. Indeed, for \(|z|>T_N\), the deterministic \(z\)-part dominates \(\mathcal{Y}_0(i\eta)\), so
\[
\|\mathcal{Y}_z(i\eta)^{-1}\|_{\rm op}
\le (|z|-\|\mathcal{Y}_0(i\eta)\|_{\rm op})^{-1}
\le \frac{1}{90\log\log N}
\le \frac14\delta_N
\]
for \(N\) large. Therefore, we remove the constraint $|z|\leq T_N$ and get,
\begin{equation}\label{strong585985585}
\mathbb{P}\left( \sup_{z\in\mathbb{C}\setminus\mathcal{E}_{\rho,\epsilon
}}\left| \langle u,
\mathcal{Y}_z(i\eta)^{-1}v\rangle -\mathbb{E}\langle u,\mathcal{Y}_z(i\eta)^{-1}v\rangle
\right|\geq \delta_N\right)= O( N^{-1.4}).
\end{equation}

Applying a similar continuity argument to \eqref{expectionconvergencea} we get 

\begin{equation}
 \sup_{z\in\mathbb{C}\setminus\mathcal{E}_{\rho,\epsilon
}}\left| \langle u,(id\otimes\tau)\mathcal{Y}_{z,\text{free}}(i\eta)^{-1}
v\rangle -\mathbb{E}\langle u,\mathcal{Y}_z(i\eta)^{-1}v\rangle
\right|\leq \delta_N.
\end{equation}

Combined with \eqref{strong585985585}, we get 
\begin{equation}\label{agagagagagagaggaaagg}
\mathbb{P}\left( \sup_{z\in\mathbb{C}\setminus\mathcal{E}_{\rho,\epsilon
}}\left| \langle u,
\mathcal{Y}_z(i\eta)^{-1}v\rangle -\langle u,(id\otimes\tau)\mathcal{Y}_{z,\text{free}}(i\eta)^{-1}
v\rangle
\right|\geq 2\delta_N\right)= O( N^{-1.4}).
\end{equation}

Now we set up a different estimate.
Consider the following event $$\Omega_N(z):=\{\|\mathcal{Y}_z(i\eta)^{-1}\|_{op}\leq 4/C_{\rho,\epsilon}\text{ for all } \eta\in[0,C_{\rho,\epsilon}/4]\}.$$ Since $z\in\mathbb{C}\setminus\mathcal{E}_{\rho,\epsilon}$, using the estimate \eqref{sufficientsmallwehave} we have $\mathbb{P}(\Omega_N(z))\geq 1-O(N^{-1.5})$. Since $z\mapsto \|\mathcal{Y}_z(i\eta)^{-1}\|_{op}$ is $(\log\log N)^2$-Lipschitz continuous, we take a covering of $(\mathbb{C}\setminus\mathcal{E}_{\rho,\epsilon})\cap\{|z|\leq T\}$ of mesh size $10^{-2}/C_{\rho,\epsilon}(\log\log N)^{-2}$  to upgrade the bound to be uniform over $z\in \mathbb{C}\setminus\mathcal{E}_{\rho,\epsilon}$ (the bound $\|\mathcal{Y}_z(i\eta)^{-1}\|_{op}\leq 4/C_{\rho,\epsilon}$ for $|z|>T$ is trivial when $T$ is large), and conclude with the following: consider the event
$$\Omega_N:=\left\{\sup_{z\in\mathbb{C}\setminus\mathcal{E}_{\rho,\epsilon},\eta\in[0,C_{\rho,\epsilon}/4]}\|\mathcal{Y}_z(i\eta)^{-1}\|_{op}\leq 4/C_{\rho,\epsilon}\right\},$$
then $$\mathbb{P}(\Omega_N)\geq 1-O(N^{-1.4}).$$ Moreover, by the resolvent identity \eqref{resolventequations}, on the event $\Omega_N$ the mapping $\eta\mapsto \|\mathcal{Y}_z(i\eta)^{-1}\|_{op}$ is Lipschitz continuous in $\eta>0$ with a Lipschitz constant $(4/C_{\rho,\epsilon})^2$ uniformly in $z\in\mathbb{C}\setminus\mathcal{E}_{\rho,\epsilon}$. That is, we have that for $N$ sufficiently large,
\begin{equation}\label{Lipschotzcontomegan}
    \|\mathcal{Y}_z(i\eta)^{-1}-\mathcal{Y}_z(0)^{-1}\|_{op}\leq (16/C_{\rho,\epsilon}^2)\delta_N\text{ for all }z\in\mathbb{C}\setminus\mathcal{E}_{\rho,\epsilon}\quad\text{ on }\Omega_N.
\end{equation}

Finally, the asymptotic in \eqref{finalagaggwergqwg} for entries of the free operator, and the fact that the function $z\mapsto \|(id\otimes\tau)\mathcal{Y}_{z,\text{free}}(i\eta)^{-1}\|_{op}$
is also $(4/C_{\rho,\epsilon})^2$-Lipschitz continuous for $z\in\mathbb{C}\setminus\mathcal{E}_{\rho,\epsilon}$ (which follows from the fact that $\sigma_{min}(\mathcal{Y}_{z,\text{free}}$) is bounded away from zero throughout $z\in\mathbb{C}\setminus\mathcal{E}_{\rho,\epsilon}$) imply via a continuity argument that
\begin{equation}\label{line765567765667}
\sup_{z\in\mathbb{C}\setminus\mathcal{E}_{\rho,\epsilon}} \left\|(id\otimes\tau)\mathcal{Y}_{z,\text{free}}(i\eta)^{-1}
-\begin{pmatrix}
    0&b(z,0+)\mathbf{1}\\\bar{b}(z,0+)\mathbf{1}&0
\end{pmatrix}\right\|_{op}\leq(16/C_{\rho,\epsilon}^2)\delta_N,
\end{equation} where we again take the choice $\eta=(\log \log N)^{-1},$ and the expression $\bar{b}(z,0+)$ is given in \eqref{finalagaggwergqwg}.

We can now conclude the proof via combining \eqref{agagagagagagaggaaagg}, \eqref{Lipschotzcontomegan} and \eqref{line765567765667}. Combining these estimates we have: for some constants $C_1,C_2>0$ depending on $\rho,\epsilon$,
\begin{equation}\label{finalfinalfinalsss}
\mathbb{P}\left( \sup_{z\in\mathbb{C}\setminus\mathcal{E}_{\rho,\epsilon
}}\left| \langle u,
\mathcal{Y}_z(0)^{-1}v\rangle -\langle u,\begin{pmatrix}
    0&b(z,0+)\mathbf{1}\\\bar{b}(z,0+)\mathbf{1}&0
\end{pmatrix}
v\rangle
\right|\geq C_2\delta_N\right)\leq C_1N^{-1.4}.
\end{equation}

Recall that $\mathcal{Y}_z(0)^{-1}$ has the expression \eqref{yzoyzo}. Thus, for any unit vector $u,v\in\mathbb{C}^N$, setting $(0,u)$ and $(v,0)$ to be the two unit vectors in $\mathbb{C}^{2N}$ in the previous estimate, we derive the (final) isotropic estimate for the original non-Hermitian matrix $X$: 
\begin{equation}\begin{aligned}&
\mathbb{P}\left( \sup_{z\in\mathbb{C}\setminus\mathcal{E}_{\rho,\epsilon
}}\left| \langle u,
(X-zI)^{-1}v\rangle -\frac{-z+\sqrt{z^2-4\rho}}{2\rho}\langle u,
v\rangle
\right|\geq C_2\delta_N\right)\leq C_1 N^{-1.4},\text{ for } \rho\neq 0
\\&
\mathbb{P}\left( \sup_{z\in\mathbb{C}\setminus\mathcal{E}_{\rho,\epsilon
}}\left| \langle u,
(X-zI)^{-1}v\rangle +\frac{1}{z}\langle u,
v\rangle
\right|\geq C_2\delta_N\right) \leq C_1 N^{-1.4},\text{ for } \rho= 0.\end{aligned}
\end{equation}
In the last step we have used  \eqref{finalagaggwergqwg} and the assumption that $\rho=\rho_0-D_N$ and that  $\lim_{N\to\infty}(\log N)D_N=0.$
Going through the above proof, we see that exactly the same argument works if we replace $\delta_N$ by $(\delta_N)^p$ and take $\eta=(\log\log N)^{-p}$ for any $p>0$.
\end{proof}

\subsection{Determination of outliers}
Finally we complete the determination of outlying eigenvalues, which completes the proof of Theorem \ref{gaussianmastertheorem}.

For this purpose, we take a finite-rank factorization for the deterministic matrix $C_N$: 
$$C_N=A_NB_N$$
where $A_N$ is some $N\times k$ matrix and $B_N$ is some $k\times N$ matrix, both of which have bounded operator norm.

Following \cite{tao2013outliers}, Lemma 2.1, we have the following eigenvalue criterion: a complex $z\in\mathbb{C}$ is an eigenvalue of $X+C_N$ but is not an eigenvalue of $X$ if and only if 
\begin{equation}
\det(1+B_N(X-z\mathbf{1})^{-1}A_N)=0.
\end{equation}
The proof is a standard linear algebra exercise using the matrix identity 
\begin{equation}\label{matrixidentity}\det(1+AB)=\det(1+BA)\end{equation}
for any $N\times k$ matrix $A$ and $k\times N$ matrix $B$.
Now we conclude the proof of Theorem \ref{gaussianmastertheorem}.

\begin{proof}[\proofname\ of Theorem \ref{gaussianmastertheorem}, determination of outliers]

Define $$f(z)=\det(1+B_N(X-z\mathbf{1})^{-1}A_N)$$
and 
$$\begin{aligned}&g(z)=\det(1+B_N\frac{-z+\sqrt{z^2-4\rho}}{2\rho}A_N),\quad \rho\neq 0,\\&g(z)=\det(1+B_N(-\frac{1}{z})A_N),\quad \rho=0.\end{aligned}$$

By a standard exercise in complex analysis, the function $z\mapsto \frac{-z+\sqrt{z^2-4\rho}}{2\rho}$ bijectively maps $\mathbb{C}\setminus\mathcal{E}_\rho$ onto the open unit disk of radius 1 centered at the origin, when $\rho\neq 0$. The same holds for $z\to-\frac{1}{z}$ when $\rho=0$.

Then using the matrix identity \eqref{matrixidentity} we deduce that the roots of $g(z)$ on $\mathbb{C}\setminus\mathcal{E}_\rho$ are precisely given by $\lambda_i(C_N)+\frac{\rho
}{\lambda_i(C_N)},1\leq i\leq j$, where 
 $\lambda_1(C_N),\cdots,\lambda_j(C_N)$ are the eigenvalues of $C_N$ such that $|\lambda_i(C_N)|> 1$ for each $i$, so there are only finitely many of them. 
 
 We first assume all the roots of $g$ in $\mathbb{C}\setminus\mathcal{E}_{\rho,\epsilon}$ are simple roots.
 From the isotropic law proved in Theorem \ref{theisotropiclaws}, we claim that on an event with probability at least $1-C_1N^{-1.4}$,
for each nonzero eigenvalue $\lambda_i(C_N)$ such that $|\lambda_i(C_N)|>1,\lambda_i(C_N)+\frac{\rho}{\lambda_i(C_N)}\in\mathbb{C}\setminus\mathcal{E}_{\rho,3\epsilon}$, 
\begin{equation}\label{howdoyoulike>}|f(z)-g(z)|<|g(z)|,\text{ for all }z\in \partial B\left(\lambda_i(C_N)+\frac{\rho}{\lambda_i(C_N)},(\delta_N)^\frac{3}{4}\right),\end{equation} 
where for any $x\in\mathbb{C}$ and $\delta>0$ we let $ B(x,\delta)$ denote the disk with center $x$ and radius $\delta$ in the complex plane, and $\partial B(x,\delta)$ denotes its boundary. To see why this is true, we assume $N$ is large enough so that $g$ has no other roots in this disk $B\left(\lambda_i(C_N)+\frac{\rho}{\lambda_i(C_N)},(\delta_N)^{\frac{3}{4}}\right)$ so that (as $g$ has simple roots) $|g(z)|\sim (\delta_N)^{\frac{3}{4}}$ on $\partial B\left(\lambda_i(C_N)+\frac{\rho}{\lambda_i(C_N)},(\delta_N)^{\frac{3}{4}}\right)$
and implies the bound \eqref{howdoyoulike>} thanks to the isotropic law
in Theorem \ref{theisotropiclaws} and that $(\delta_N)^\frac{3}{4}\gg\delta_N$.

Hence by Rouché's theorem in complex analysis, $f(z)$ has a solution that is $(\delta_N)^\frac{3}{4}$- close to $\lambda_i(C_N)+\frac{\rho}{\lambda_i(C_N)}$ (i.e. lies in $B(\lambda_i(C_N)+\frac{\rho}{\lambda_i(C_N)},(\delta_N)^\frac{3}{4})$) for each $i=1,\cdots,j$.  Likewise, fix another domain $B(x,\delta_N),x\in\mathbb{C}\setminus\mathcal{E}_{\rho,2\epsilon}$ on which $g$ has no zeros, we must have $|f(z)-g(z)|<|g(z)|$ for $z\in\partial B(x,\delta_N)$ when $N$ is large, hence by Rouché's theorem $f$ has no zeros on $B(x,\delta_N)$. Since for large $N$ we  have $B(x,(\delta_N)^\frac{3}{4})\subset\mathbb{C}\setminus\mathcal{E}_{\rho,\epsilon}$, the isotropic law can still be used here. This characterizes all roots of $f$ in $\mathbb{C}\setminus\mathcal{E}_{\rho,2\epsilon}$ and hence determines all outlying eigenvalues of $X+C_N$ in $\mathbb{C}\setminus\mathcal{E}_{\rho,2\epsilon}.$ This completes the proof of Theorem \ref{gaussianmastertheorem} as we have $(\delta_N)^\frac{3}{4}=O((\log\log N)^{-\frac{3}{4}})$.

When $g$ has repeated roots, note that $g$ has degree $d_g=O(1)$, we now claim that \eqref{howdoyoulike>} continues to hold with probability $1-C_1N^{-1.4}.$ To see this we assume $N$ is large so $g$ has no roots in $B(\lambda_i(C_N)+\frac{\rho}{\lambda_i(C_N)},(\delta_N)^\frac{3}{4})$ except at the center of this disk, which may have multiplicity $d_g$ or less. Thus we have $(\delta_N)^\frac{3d_g}{4}\lesssim |g(z)|$ on $z\in \partial B\left(\lambda_i(C_N)+\frac{\rho}{\lambda_i(C_N)},(\delta_N)^{\frac{3}{4}}\right)$. Then we can apply the isotropic law
in Theorem \ref{theisotropiclaws} with the version where we have $(\delta_N)^{d_g}$ on the right hand side of \eqref{isotropiclaws} and thus complete the proof.
\end{proof}

\subsection{The non-Gaussian case} The proof of the master theorem \ref{nongaussianmastertheorem} in the non-Gaussian case is a simple adaptation of the proof in the Gaussian case (although we do not merge the proofs as they use different theorems).

\begin{proof}[\proofname\ of Theorem \ref{nongaussianmastertheorem}] We follow exactly the steps in the proof of Theorem \ref{gaussianmastertheorem}, where we have the same free probability object $\mathcal{Y}_{z,\text{free}}$ having the same Stieltjes transform. It suffices to use Theorem \ref{theorem2.41}, \ref{generalconcentration} and \ref{theorem2.61} combined with (or in place of) Theorem \ref{resolventconvergencegauss}, \ref{resolventconcentrationgauss} and \ref{spectraluniversality} and all other steps are not changed: the former group of theorems provide the necessary concentration and comparison estimates for the non-Gaussian model and compares the non-Gaussian model to the associated Gaussian model $G$ or the free model $\mathcal{Y}_{z,\text{free}}$; whereas the latter group of theorems further compares the Gaussian model to the free model. To verify that parameters satisfy the claimed asymptotics, first note that, by definition,
\[
\bar R(X)=\Bigl(\mathbb E\max_i\|Z_i\|^2\Bigr)^{1/2}
\le \Bigl\|\max_i\|Z_i\|\Bigr\|_\infty=R(X).
\]
Moreover, for the Hermitized matrix \(\mathcal{Y}_z\), the corresponding block-size parameter
satisfies
\[
R(\mathcal{Y}_z)\lesssim R(X),\qquad \bar R(\mathcal{Y}_z)\lesssim \bar R(X)\le R(X),
\]
and similarly
\[
\sigma_*(\mathcal{Y}_z)\lesssim \sigma_*(X),\qquad
\widetilde {v}(\mathcal{Y}_z)\lesssim \widetilde v(X),\qquad
\sigma(\mathcal{Y}_z)=O(1).
\]
Thus the resolvent comparison error in Theorem \ref{theorem2.41} is bounded by
\[
\frac{\sigma_*(X)+R(X)^{1/10}}{\eta^2},
\]
up to constants depending only on \(\rho,\varepsilon\), after setting
\(\eta=(\log\log N)^{-p}\). This is \(o((\log\log N)^{-p'})\) for every fixed \(p'>0\)
under the assumption \(R(X)\ll(\log N)^{-2}\) and
\(\sigma_*(X)\le(\log N)^{-3/2}\).

The concentration estimate Theorem \ref{generalconcentration} and the spectral concentration estimate
Theorem \ref{theorem2.61} use the stronger almost-sure parameter \(R(\mathcal{Y}_z)\), which is also
\(O(R(X))\). Therefore the same net, continuity, and Rouché arguments used in the
proof of Theorem \ref{gaussianmastertheorem} apply without further changes. 
\end{proof}

\section{Proof of Applications}
Before proving 
Theorem \ref{theorem1.1} and \ref{TheEllipticTheorem}
via Theorem \ref{gaussianmastertheorem}, we encode in the following how we verify that a specific random matrix ensemble satisfies the assumptions of Theorem \ref{gaussianmastertheorem}.

\begin{lemma}[A matrix-unit block computation]\label{lem:block-parameter}
Let the matrix $X$ have the following decomposition into independent blocks $Z_\alpha$,
\[
X=\sum_{\alpha\in\mathcal A} Z_\alpha,\qquad
Z_\alpha=\sum_{r=1}^{q_\alpha} \xi_{\alpha r} a_{\alpha r}E_{i_{\alpha r}j_{\alpha r}},
\]
where the matrices \(Z_\alpha\) are mutually independent and have zero mean. Here $a_{\alpha r}\in\mathbb{C}$, $\xi_{\alpha r}$ are complex-valued random variables and $E_{i_{\alpha_r}j_{\alpha_r}}$ are the matrix units. Assume that the
matrix units \(E_{i_{\alpha r}j_{\alpha r}}\) appearing above are all distinct, that for some $\kappa_N>0$,
\[
\max_{\alpha,r}|a_{\alpha r}|\le \kappa_N,
\]
and that the block covariance and pseudo-covariance matrices satisfy
\[
\left\|\mathbb E\,\xi_\alpha\xi_\alpha^*\right\|\le L,
\qquad
\left\|\mathbb E\,\xi_\alpha\xi_\alpha^t\right\|\le L
\]
for every \(\alpha\), where \(L=O(1)\) and where \(\xi_\alpha=(\xi_{\alpha 1},\ldots,\xi_{\alpha q_\alpha})^t\). If moreover we are in the doubly stochastic case
\[
\mathbb E[XX^*]=\mathbb E[X^*X]=I,
\]
then we have the following parameter upper bounds
\begin{equation}
\sigma(X)=1,\qquad
\sigma_*(X)\le L^{1/2}\kappa_N,\qquad
v(X)\le L^{1/2}\kappa_N,\qquad
\widetilde v(X)\le L^{1/4}\kappa_N^{1/2}.
\end{equation}
In the non-Gaussian case, we have
\begin{equation}
R(X)\le \max_{\alpha\in\mathcal A}\|Z_\alpha\|_\infty.
\label{4.2}
\end{equation}
In particular, if each \(Z_\alpha\) is supported on a matching of matrix units (so that their row indices are distinct and their column indices are distinct), then
\begin{equation}
R(X)\le
\kappa_N
\max_{\alpha\in\mathcal A}
\left\|\max_{1\le r\le q_\alpha}|\xi_{\alpha r}|\right\|_\infty .
\label{4.3}
\end{equation}
\end{lemma}

\begin{proof}
For unit vectors \(u,w\), denote by
\[
b_{\alpha r}:=a_{\alpha r}\overline{u}_{i_{\alpha r}}w_{j_{\alpha r}}.
\]
Then we compute the variance via
\[
\mathbb E|\langle u,Xw\rangle|^2
=
\sum_{\alpha\in\mathcal A}
\mathbb E\left|\sum_{r=1}^{q_\alpha}\xi_{\alpha r}b_{\alpha r}\right|^2
\le
L\sum_{\alpha,r}|b_{\alpha r}|^2.
\]
Since the matrix units are distinct,
\[
\sum_{\alpha,r}|b_{\alpha r}|^2
\le
\kappa_N^2
\sum_{\alpha,r}|u_{i_{\alpha r}}|^2|w_{j_{\alpha r}}|^2
\le
\kappa_N^2
\sum_{i,j}|u_i|^2|w_j|^2
=
\kappa_N^2.
\]
Thus
\[
\sigma_*(X)^2\le L\kappa_N^2.
\]

Similarly, for any matrix \(M\) with \(\|M\|_{\mathrm{HS}}\le1\),
\[
\operatorname{Tr}(XM)
=
\sum_{\alpha,r}\xi_{\alpha r}a_{\alpha r}(M)_{j_{\alpha r}i_{\alpha r}},
\]
and the same computation gives
\[
\mathbb E|\operatorname{Tr}(XM)|^2
\le
L\kappa_N^2\sum_{i,j}|M_{ji}|^2
\le
L\kappa_N^2.
\]
Equivalently, \(v(X)^2\le L\kappa_N^2\). Since
\[
\mathbb E[XX^*]=\mathbb E[X^*X]=I,
\]
we have \(\sigma(X)=1\). Therefore
\[
\widetilde v(X)^2=v(X)\sigma(X)\le L^{1/2}\kappa_N,
\]
which gives
\[
\widetilde v(X)\le L^{1/4}\kappa_N^{1/2}.
\]
Finally, the estimate for \(R(X)\) exactly follows from its definition, being the largest one among the essential
supremum norm of each independent block $Z_\alpha$. 
\end{proof}

In Theorem \ref{TheEllipticTheorem}, each independent block corresponding to an unordered edge
\(\{x,y\}\), \(x<y\) is
\[
Z_{\{x,y\}}
=
d_N^{-1/2}\bigl((X_N)_{xy}E_{xy}+(X_N)_{yx}E_{yx}\bigr).
\]
The two-dimensional covariance and pseudo-covariance matrices of
\(((X_N)_{xy},(X_N)_{yx})\) have operator norm bounded by an absolute constant depending
only on \(|\rho|\le1\). Thus, denoting $H:=(d_N)^{-1/2}X_N$, Lemma \ref{lem:block-parameter} shows that
\begin{equation}\label{4.44}
\sigma_*(H)\le C d_N^{-1/2},\qquad
v(H)\le C d_N^{-1/2},\qquad
\widetilde v(H)\le C d_N^{-1/4},
\end{equation}
and
\begin{equation}\label{4.5}
R(H)\le
d_N^{-1/2}
\max_{\{x,y\}\in E_N,\ x<y}
\left\|
\max\{|(X_N)_{xy}|, |(X_N)_{yx}|\}
\right\|_\infty .
\end{equation}

For Theorem \ref{theorem1.1}, the same computation is indeed simpler, with one-dimensional blocks $(d_N)^{-1/2}(X_N)_{xy}E_{xy}$ indexed by directed edges.

For the periodic linearization in Theorem \ref{outliersofproducts}, the same lemma applies with
\[
\kappa_N=\max_{1\le i\le m}d_{N,i}^{-1/2}=d_{\min,N}^{-1/2}.
\]
Since \(m\) is fixed, the constant \(L\) remains \(O_m(1)\), and hence
\begin{equation}
\sigma_*(\mathcal X_N)\le C_m d_{\min,N}^{-1/2},\qquad
v(\mathcal X_N)\le C_m d_{\min,N}^{-1/2},\qquad
\widetilde v(\mathcal X_N)\le C_m d_{\min,N}^{-1/4}.
\label{4.6}
\end{equation}
Moreover,
\begin{equation}
R(\mathcal X_N)
\le
\max_{1\le i\le m} d_{N,i}^{-1/2}K_{N,i},
\label{4.7}
\end{equation}
where \(K_{N,i}\) is the corresponding essential-supremum entry/block bound for the
\(i\)-th factor.

\subsection{Truncation step}

We can now prove Theorem \ref{theorem1.1} and \ref{TheEllipticTheorem} via a truncation argument.

\begin{proof}[\proofname\ of Theorem \ref{theorem1.1}] Let $H:=(d_N)^{-1/2}X_N$ where $X_N$ is specified in Definition \ref{definebandmatrix}. By definition we verify that $$\mathbb{E}[HH^*]=\mathbb{E}[H^*H]=\mathbf{1}$$ and since no self-loops $(x,x)\in E_N$ exist, we simply have  $\mathbb{E}[H^2]=\mathbf{0}.$

    Therefore the claim of Theorem \ref{theorem1.1} for Gaussian entries (case (1)) follows from Theorem \ref{gaussianmastertheorem} with $\rho=0$, and the claim of Theorem \ref{theorem1.1} for bounded entries (case (2)) follows from Theorem \ref{nongaussianmastertheorem} with $\rho=0$. Note that the assumption $d_N\gg (\log N)^3$ was made to justify \eqref{smallness} via \eqref{4.44}, so that we can use Theorem \ref{gaussianmastertheorem}. After realizing the matrices $X_N$ on a common probability space, the almost sure statement follows from an application of Borel-Cantelli lemma. This completes the proof in cases (1) and (2).

It remains to prove Theorem \ref{theorem1.1}  case (3), where entries are i.i.d. with finite $p$-th moment. We begin with a moment estimate similar to \cite{brailovskaya2022universality}, Corollary 3.32: 
$$
\mathbb{E}\left[\sup_{(x,y)\in E_N}|(d_N)^{-1/2}g_{xy}|^2\right]\leq\frac{1}{d_N}\mathbb{E}\left[\max_{(x,y)\in E_N}|g_{xy}|^p\right]^\frac{2}{p}\leq\frac{(Nd_N)^\frac{2}{p}}{d_N}\mathbb{E}[|g_{xy}|^p]^\frac{2}{p}.
$$ By our assumption on $d_N$ we have $(d_N)^{-1}(Nd_N)^\frac{2}{p}\ll (\log N)^{-4}$. Therefore we conclude that we can find a deterministic sequence $a_N\searrow 0$ such that with probability $1-o(1)$,
\begin{equation}\label{event8888}(\log N)^2\sup_{(x,y)\in E_N}(d_N)^{-1/2}|g_{xy}|\leq  a_N.\end{equation} For regularity reasons we also assume that $a_N\geq (\log N)^{-\frac{4}{p-2}}$.

  Then we upgrade \eqref{event8888} to an almost sure convergence. We use \cite{brailovskaya2022universality}, Lemma 9.22 and the fact that $\mathbb{E}[|g_{xy}|^p]<\infty$ to deduce that 
   $$
\lim_{N\to\infty}\frac{1}{(Nd_N)^\frac{1}{p}}\sup_{(x,y)\in E_N}|g_{xy}|=0
   \quad a.s.,$$ then using our assumptions on $d_N$ and \cite{brailovskaya2022universality}, Lemma 9.21, we deduce that we can find deterministic sequence $a_N\searrow 0$, such that
   \begin{equation}\label{eventualalmostsure}
\sup_{(x,y)\in E_N}(d_N)^{-1/2}|g_{xy}|\leq (\log N)^{-2}a_N\text{ eventually } a.s.
   \end{equation} 

   Now we define a truncated matrix $\widetilde{X}_N$ on $G_N$ such that for each edge $(x,y)\in E_N$, the entry $(\widetilde{X}_N)_{(x,y)}$ is $g_{xy}1_{|g_{xy}|\leq a_N(d_N)^{1/2}(\log N)^{-2}}$. We let 
   $$
\widetilde{g}_{xy}:=g_{xy}1_{|g_{xy}|\leq a_N(d_N)^{1/2}(\log N)^{-2}},
   $$ since $g_{xy}$ is symmetric we have $\mathbb{E}[\widetilde{g}_{xy}]=0$. We also compute that  $$V_N:=\mathbb{E}[|\widetilde{g}_{xy}|^2]=1-O(N^{-1}) .$$ This is because by assumption on $d_N$ we have $a_N(d_N)^{1/2}(\log N)^{-2}\geq N^\frac{1}{p-2}$, so that 
   $$\mathbb{E}[|g_{xy}|^2]-\mathbb{E}[|\widetilde{g}_{xy}|^2]=\mathbb{E}[|g_{xy}|^21_{|g_{xy}|\geq a_N(d_N)^{1/2}(\log N)^{-2}}]\leq N^{-1}\mathbb{E}[|g_{xy}|^p].
   $$

   Therefore we may instead consider $$\widetilde{H}:=(d_NV_N)^{-1/2}\widetilde{X}_N,$$ or equivalently it has the following entry-wise truncation
   $$
   {\widetilde{H}}_{ij}=(V_N)^{-1/2}H_{ij}\mathbf{1}_{|H_{ij}|\leq a_N(\log N)^{-2}}
   ,$$and now this matrix satisfies the assumptions of Theorem \ref{theorem1.1}, case (2) (we verify that the entries are bounded and $\mathbb{E}[\widetilde{H}\widetilde{H}^*]=\mathbb{E}[\widetilde{H}^*\widetilde{H}]=\mathbf{1}$, $\mathbb{E}[\widetilde{H}^2]=\mathbf{0}$), so the claim on absence of outliers (part (1)) and on convergence of outlying eigenvalues (part (2)) hold for $\widetilde{H}$. Moreover, $X_N=\widetilde{X}_N$ eventually almost surely thanks to \eqref{eventualalmostsure}, so that $\widetilde{H}=(V_N)^{-1/2}H$ eventually almost surely. Since $V_N=1-o(1)$, this factor can approximately be neglected in the limit. Thus an elementary exercise enables us to transfer the finite rank perturbation results on $\widetilde{H}$ to that of $H$. (We give the details for a similar step in the last paragraph before Section \ref{subsection4.2}. The details are presented in the final paragraph of this subsection.)
\end{proof}

For banded elliptic random matrices, a very similar argument can be used to prove Theorem \ref{TheEllipticTheorem}.

\begin{proof}[\proofname\ of Theorem \ref{TheEllipticTheorem}] 
    Let $H:=(d_N)^{-1/2}X_N$. Then by definition we have 
    $$
\mathbb{E}[HH^*]=\mathbb{E}[H^*H]=\mathbf{1},\quad \mathbb{E}[H^2]=\rho \mathbf{1}.
    $$

    In case (1) when entries are Gaussian, the claim is implied by Theorem \ref{gaussianmastertheorem}. In case (2) when entries are bounded, the claim is implied by Theorem \ref{nongaussianmastertheorem}.

    It remains to consider case (3) when $(g_1,g_2)$ have finite $p$-th moment. As in the previous proof, applying \cite{brailovskaya2022universality}, Lemma 9.22 to the following two families of random variables $\{(X_N)_{(x,y)},x< y,(x,y)\in E_N\}$, $\{(X_N)_{(x,y)},y< x,(x,y)\in E_N\}$  (each group consists of i.i.d. random variables) and using the fact that $\mathbb{E}[|g_i|^p]<\infty$ for $i=1,2$, we conclude that    $$
\lim_{N\to\infty}\frac{1}{(Nd_N)^\frac{1}{p}}\sup_{(x,y)\in E_N}\max\left(|(X_N)_{(x,y)}|,|(X_N)_{(y,x)}|\right)=0
   \quad a.s.,$$ then using our assumptions on $d_N$ and \cite{brailovskaya2022universality}, Lemma 9.21, we deduce that we can find a deterministic sequence $a_N\searrow 0$, such that
   \begin{equation}\label{ASSTATEMENT}
\sup_{(x,y)\in E_N}(d_N)^{-1/2}\max\left(|(X_N)_{(x,y)}|,|(X_N)_{(y,x)}|\right)\leq (\log N)^{-2}a_N\text{ eventually } a.s.
   \end{equation}
 We again assume that $a_N\geq (\log N)^{-\frac{4}{p-2}}$, as this can be achieved by replacing $a_N$ by $\max(a_N, (\log N)^{-\frac{4}{p-2}})$.

   Now we define a truncated matrix $\widetilde{X}_N$ on $G_N$ such that for each edge $(x,y)\in E_N$, the entry $(\widetilde{X}_N)_{(x,y)}$ is $(X_N)_{(x,y)}1_{|(X_N)_{(x,y)}|\leq a_N(d_N)^{1/2}(\log N)^{-2}}$. Then $\widetilde{X}_N=X_N$ eventually almost surely. We now verify that this truncation approximately preserves the variance and pseudo-variance. Define the truncated version $(\widetilde{g}_1,\widetilde{g}_2)$ via setting
$ \widetilde{g}_i\stackrel{\text{law}}{=}g_i1_{|g_i|\leq a_N(d_N)^{1/2}(\log N)^{-2}}$ for each $i=1,2.$ Since $g_1,g_2$ are symmetric, $\widetilde{g}_1,\widetilde{g}_2$ still has mean zero. As in the previous proof, applying Markov's inequality, the fact that $g_1\stackrel{\text{law}}{=}g_2$, and the finite $p$-th moment of $g_i$, we check that 
$$
\mathbb{E}[|\widetilde{g_1}|^2]=\mathbb{E}[|\widetilde{g_2}|^2]=1-O(N^{-1}),
$$
   and that $$
\mathbb{E}[\widetilde{g_1}\widetilde{g_2}]=\rho+O(N^{-1}).$$

   After a simple computation, we can find a constant $V_N=1-O(N^{-1})$ such that $$\widetilde{H}:=(d_NV_N)^{-1/2}\widetilde{X}_N$$ satisfies
   $$\mathbb{E}[\widetilde{H}]=0,\quad\mathbb{E}[\widetilde{H}\widetilde{H}^*]=\mathbb{E}[\widetilde{H}^*\widetilde{H}]=\mathbf{1},\quad\mathbb{E}[\widetilde{H}^2]=\rho\mathbf{1}
   $$
   (all the assumptions in Theorem \ref{nongaussianmastertheorem}), with $V_N$ chosen such that the variance is exactly doubly stochastic. By \eqref{ASSTATEMENT} we have $$H=(V_N)^{1/2}\widetilde{H}\text{ eventually almost surely}.$$ Since we have proven no outlier and finite rank perturbation theorems (i.e. the first and second conclusions of Theorem \ref{TheEllipticTheorem}) for the matrix $\widetilde{H}$, as we have $V_N\to 1$, this implies the same limit of outlying eigenvalues for $H$. The details are given as follows:
   
   We outline the technical steps for the outlying eigenvalues, and the steps for the no-outliers are even easier.
   From Theorem \ref{nongaussianmastertheorem}, we determine that outlying eigenvalues of $\widetilde{H}+(V_N)^{-\frac{1}{2}}C_N$ are asymptotically close to outlying eigenvalues of $(V_N)^{-\frac{1}{2}}C_N$ under the mapping $x\mapsto x+\frac{\rho}{x}$, in the sense that the distance of outlying eigenvalues to the image under mappings of outlying eigenvalues of $(V_N)^{-\frac{1}{2}}C_N$ converges to 0 with probability $1-C_1N^{-1.4}$, which implies almost sure convergence via Borel-Cantelli. Meanwhile, outlying eigenvalues of $(V_N)^{-\frac{1}{2}C_N}$ converge to outlying eigenvalues of $C_N$ as $V_N\to 1$. Since the mapping $x\mapsto x+\frac{\rho}{x}$ is Lipschitz in the given region, we deduce that outlying eigenvalues of $\widetilde{H}+(V_N)^{-\frac{1}{2}}C_N$ are asymptotically close (as $N\to\infty$) to outlying eigenvalues of $C_N$ mapped via $x\mapsto x+\frac{\rho}{x}$. Since $H+C_N=(V_N)^{\frac{1}{2}}(\widetilde{H}+(V_N)^{-\frac{1}{2}}C_N)$ eventually almost surely and $V_N\to 1$, we have verified that outlying eigenvalues of $H+C_N$ are asymptotically close to outlying eigenvalues of $C_N$ under the mapping $x\mapsto x+\frac{\rho}{x}$ eventually almost surely. 
\end{proof}

\subsection{Product of elliptic matrices}\label{subsection4.2}

Finally we give the proof of Theorem \ref{outliersofproducts}. The proof relies on a standard linearization argument.

\begin{proof}[\proofname\ of Theorem \ref{outliersofproducts}]

We consider the following two linearization matrices
$$
\mathcal{X}_N:=\begin{pmatrix}
    0&(d_{N,1})^{-1/2}X_N^1&\quad&\quad& 0\\
    0&0&(d_{N,2})^{-1/2}X_N^2 &\quad&0\\
    \quad&\quad&\ddots&\ddots&\quad\\
    0&\quad&\quad&0&(d_{N,m-1})^{-1/2}X_N^{m-1}\\(d_{N,m})^{-1/2}X_N^m&\quad&\quad&\quad& 0
\end{pmatrix}.
$$
and the $mN\times mN$ matrix
$$
\mathcal{A}_N:=\begin{pmatrix}
    0&A_N^1&&& 0\\
    0&0&A_N^2 &&0\\
    &&\ddots&\ddots&\\
    0&&&0&A_N^{m-1}\\A_N^m&&&& 0
\end{pmatrix}.
$$

From elementary linear algebra (see \cite{coston2020outliers}, Proposition 4.1), we readily see that if $\lambda$ is an eigenvalue of $\mathcal{X}_N$, then $\lambda^m$ is an eigenvalue of $(\mathcal{X}_N)^m$, so that $\lambda^m$ is an eigenvalue of $D_N^m$. Similarly, if $\lambda$ is an eigenvalue of $\mathcal{A}_N$ (or $\mathcal{X}_N+\mathcal{A}_N$), then $\lambda^m$ is an eigenvalue of $A_N$ (or $D_N^{m,1}$).

Let $\zeta_m:=\{z\in\mathbb{C}:z^m=1\}$ denote the set of $m$-th roots of unity. Then the nonzero eigenvalues of $\mathcal{A}_N$, which include the outliers, are given by $$(\{\lambda_i(A_N))^\frac{1}{m}\zeta_m,\quad i=1,\cdots,j\},$$ where we use $(\{\lambda_i(A_N))^\frac{1}{m}$ to denote any $m$-th root of $\lambda_i(A_N)$.

For the matrix $\mathcal{X}_N$, we readily check that whenever $m\geq 2$, $$\mathbb{E}[\mathcal{X}_N(\mathcal{X}_N)^*]=\mathbb{E}[(\mathcal{X}_N)^*\mathcal{X}_N]=\mathbf{1},\quad \mathbb{E}[(\mathcal{X}_N)^2]=0.$$ 

Then we can apply Theorems \ref{gaussianmastertheorem} and \ref{nongaussianmastertheorem} to $\mathcal{X}_N$ perturbed by the deterministic matrix $\mathcal{A}_N$ after checking via \eqref{4.6} and \eqref{4.7} that the quantitative bound conditions hold (when all entries of $\mathcal{X}_N$ are Gaussian in case (1) or bounded in case (2)), and conclude that for any $\epsilon>0$ a.s. for $N$ large there are no eigenvalues of $\mathcal{X}_N$ in $\mathbb{C}\setminus (1+\epsilon)^\frac{1}{m}\mathbb{D}$, so that  there are no eigenvalues of $D_N^m$ outside $(1+\epsilon)\mathbb{D}$. To apply Theorem \ref{gaussianmastertheorem} to the linearization, we choose \(\eta>0\) small enough, depending
only on \(\epsilon\) and \(m\), so that the annular separation assumption for \(A_N\) (that it has no eigenvalues with modulus in $[1+\epsilon,1+3\epsilon]$ )
implies the corresponding annular separation assumption for the nonzero eigenvalues of
\(\mathcal A_N\), which are the \(m\)-th roots of the nonzero eigenvalues of \(A_N\).
After applying the theorem with this \(\eta\), taking \(m\)-th powers gives the stated
annuli for \(D_N^m\) and \(D_N^{m,1}\).

Moreover, by Theorems \ref{gaussianmastertheorem}, almost surely for $N$ sufficiently large, there are precisely $mj$ outlying eigenvalues of $\mathcal{X}_N+\mathcal{A}_N$ in $\mathbb{C}\setminus (1+\epsilon)^\frac{1}{m}\mathbb{D}$, and after relabeling they converge to $(\lambda_i(A_N))^\frac{1}{m}\zeta_m(1+o(1))$ for each $i=1,\cdots,j$. This implies that almost surely for $N$ large, there are precisely $j$ outlying eigenvalues of $D_N^{m,1}$ in $\mathbb{C}\setminus (1+\epsilon)\mathbb{D}$, and after relabeling, these eigenvalues converge to $(\lambda_i(A_N))(1+o(1))$ for each $i=1,\cdots,j$. 

Finally, when entries of $\mathcal{X}_N$ have a bounded $p$-th moment, we may use a similar truncation argument as in the proof of Theorem \ref{theorem1.1} and \ref{TheEllipticTheorem}. The adaptations are straightforward and hence omitted.\end{proof}

\subsection{The case of self-loops}Finally, we present the proof of Theorem \ref{theoremlines404}.

\begin{proof}[\proofname\ of Theorem \ref{theoremlines404}]
    Let $H:=(d_N)^{-1/2}X_N$, then we have
    $$
\mathbb{E}[HH^*]=\mathbb{E}[H^*H]=\mathbf{1},\quad \mathbb{E}[H^2]=\frac{d_N-1}{d_N}\rho\mathbf{1}+\frac{1}{d_N}\mathbb{E}[\xi^2]\mathbf{1}.
    $$By definition of $d_N$ we can find $D_N=\frac{\mathbb{E}[\xi^2]-\rho}{d_N}\in\mathbb{C}$ such that $\lim_{N\to\infty}(\log N)D_N=0$ and $\mathbb{E}[H^2]=(\rho+D_N)\mathbf{1}$. The bounds on \(\sigma_*(H),v(H),\widetilde {v}(H)\), and in the bounded case \(R(H)\),
follow from Lemma \ref{lem:block-parameter} applied to the two-dimensional off-diagonal edge blocks
together with the one-dimensional diagonal blocks \(d_N^{-1/2}\xi E_{xx}\). Then we directly apply Theorem \ref{gaussianmastertheorem} or Theorem \ref{nongaussianmastertheorem}. 
\end{proof}

\appendix

\section{Proof of concentration inequalities}
\label{sectionappendixa}
In this appendix we outline a proof of Theorem \ref{resolventconcentrationgauss} and \ref{generalconcentration}; the proofs are essentially an adaptation of the version in \cite{brailovskaya2022universality}. 

\begin{proof}[\proofname\ of Theorem \ref{resolventconcentrationgauss}]

We write the Gaussian model $G$ as
$$
G:=A_0+\sum_{i=1}^n g_iA_i
$$ where $g_1,\cdots,g_n$ are independent mean $0$, variance $1$ Gaussian variables and $A_i\in M_N(\mathbb{C})_{sa}$ are fixed self-adjoint matrices. Define $f:\mathbb{R}^n\to\mathbb{C}$ as
$$
f(x):=\langle u,(z\mathbf{1}-A_0-\sum_{i=1}^nx_iA_i)^{-1}v\rangle.
$$We apply concentration inequalities separately to the real and imaginary parts of $u,v$ and $f$.
We can verify as in the proof of  \cite{brailovskaya2022universality}, Lemma 5.5 that $f$ is $\frac{\sigma_*(G)}{(\operatorname{Im} z)^2}$ Lipschitz continuous, so that $\langle u,(z\mathbf{1}-G)^{-1}v\rangle$ is a $\frac{\sigma_*(G)}{(\operatorname{Im} z)^2}$ Lipschitz continuous function of $n$ standard Gaussian variables. Then the Gaussian concentration inequality from \cite{10.1093/acprof:oso/9780199535255.001.0001} Theorem 5.6 finishes the proof.\end{proof}

\begin{proof}[\proofname\ of Theorem \ref{generalconcentration}]We only need to prove the case where $u,v\in\mathbb{R}^N$ are unit vectors, as the complex case follows from applying the same estimate separately to the real and complex part. We will adapt the proof of \cite{brailovskaya2022universality} and only give a sketch. Some estimates we present are suboptimal, but we will use them as we can simply quote the computations from the cited work.

Let $W$ be of the form \eqref{generalmodels} and consider $(Z_1',\cdots,Z_n')$ to be some independent copy of $(Z_1,\cdots,Z_n)$. Let $W^{\sim i}:=Z_0+\sum_{j\neq i}Z_j+Z_i'$. Then for any fixed unit vectors $u,v\in\mathbb{R}^N$, 
$$
\sum_{i=1}^n \left(\langle u,(z\mathbf{1}-W)^{-1}v-(z\mathbf{1}-W^{\sim i})^{-1}v\rangle\right)_+^2\leq T,
$$where we define $T$ via
$$
T:=\frac{2}{(\operatorname{Im}z)^4}\sup_{\|v\|=\|w\|=1}\sum_{i=1}^n|\langle v,(Z_i-Z_i')w\rangle|^2+\frac{8}{(\operatorname{Im}z)^6}R(W)^2\|\sum_{i=1}^n(Z_i-Z_i')^2\|.
$$

Then we can estimate as in Lemma 5.8 of \cite{brailovskaya2022universality},
$$
\mathbb{E}[T]\lesssim \frac{\sigma_*(W)^2}{(\operatorname{Im}z)^4} +\frac{R(W)\mathbb{E}\|W-\mathbb{E}W\|}{(\operatorname{Im}z)^4}+\frac{R(W)^2\mathbb{E}\|W-\mathbb{E}W\|^2}{(\operatorname{Im}z)^6}.
$$ $T$ has the following self-bounding property: consider
$$
T^{\sim i}:=\frac{2}{(\operatorname{Im}z)^4}\sup_{\|v\|=\|w\|=1}\sum_{j\neq i}|\langle v,(Z_j-Z_j')w\rangle|^2+\frac{8}{(\operatorname{Im}z)^6}R(W)^2\|\sum_{j\neq i}^n(Z_j-Z_j')^2\|,
$$
    then by \cite{brailovskaya2022universality}, Lemma 5.9, $T^{\sim i}\leq T$ and 
    $$
\sum_{i=1}^n(T-T^{\sim i})^2\leq\left\{\frac{16R(W)^2}{(\operatorname{Im}z)^4}+\frac{64R(W)^4}{(\operatorname{Im}z)^6}\right\}T.
    $$

    From the self-bounding property of $T$ we get $$ \log\mathbb{E}[e^{T/a}]\leq\frac{2}{a}\mathbb{E}[T],\quad a=\frac{16R(W)^2}{(\operatorname{Im}z)^4}+\frac{64R(W)^4}{(\operatorname{Im}z)^6}.
    $$
    Then for $0\leq\lambda\leq a^{-1/2}$, by exponential Poincaré inequality and Chernoff bound, we have
$$ \mathbb{P}\left[\langle u,(z\mathbf{1}-W)^{-1}v\rangle\geq \mathbb{E}\langle u,(z\mathbf{1}-W)^{-1}v\rangle+\sqrt{8\mathbb{E}Tx}+\sqrt{a}x
\right]\leq e^{-x}.
    $$

We can also prove via a refinement of \cite{10.1093/acprof:oso/9780199535255.001.0001} Theorem 6.16 that for any $0\leq\lambda\leq (2a)^{-\frac{1}{2}},$
$$
\log\mathbb{E}\left[e^{-\lambda\{\langle u,(z\mathbf{1}-W)^{-1}v\rangle-\mathbb{E}\langle u,(z\mathbf{1}-W)^{-1}v\rangle\}}\right]\leq\frac{4\mathbb{E}[T]\lambda^2}{1-\lambda\sqrt{2a}}.
$$Then Chernoff's inequality implies the lower tail bound for any $x>0$
$$ \mathbb{P}\left[\langle u,(z\mathbf{1}-W)^{-1}v\rangle\leq \mathbb{E}\langle u,(z\mathbf{1}-W)^{-1}v\rangle-4\sqrt{\mathbb{E}Tx}-\sqrt{2a}x
\right]\leq e^{-x}.
    $$
This completes the proof with both upper and lower tails derived.
    \end{proof}

\printbibliography

\end{document}